\documentclass[10pt,a4paper,reqno]{amsart} 


\usepackage{latexsym}
\usepackage{verbatim}
\usepackage{epsfig}
\usepackage{rotating}
\usepackage{amssymb}
\usepackage[T1]{fontenc}
\usepackage{afterpage}
\usepackage{color}

\usepackage{amssymb,amsfonts,amsmath,stmaryrd}

\graphicspath{{Figures/}}

\catcode`\@=11
\def\section{\@startsection{section}{1}%
 \z@{.7\linespacing\@plus\linespacing}{.5\linespacing}%
 {\normalfont\bfseries\scshape\centering}}

\def\subsection{\@startsection{subsection}{2}%
  \z@{.5\linespacing\@plus\linespacing}{.5\linespacing}%
  {\normalfont\bfseries\scshape}}

\def\subsubsection{\@startsection{subsubsection}{3}%
 \z@{.5\linespacing\@plus\linespacing}{-.5em}
  {\normalfont\bfseries\itshape}}
\catcode`\@=12

%
\addtolength{\textheight}{-1mm} \topmargin5mm
\addtolength{\textwidth}{20mm} 
\hoffset -6mm

\newtheorem{Theorem}{Theorem}
\newtheorem{Lemma}[Theorem]{Lemma}
\newtheorem{Proposition}[Theorem]{Proposition}

\def\qed{$\hfill{\vrule height 3pt width 5pt depth 2pt}$}

%
\newfont{\bbold}{msbm10 scaled \magstep1}
\newfont{\bbolds}{msbm7 scaled \magstep1}

\newcommand{\ns}{\mathbb{N}}

\newcommand{\zs}{\mathbb{Z}}

\newcommand{\qs}{\mathbb{Q}}

\newcommand{\rs}{\mathbb{R}}

\newcommand{\bz}{\bar z}

\newcommand{\GP}{\mathbb{P}}

\newcommand{\E}{\mathbb{E}}
\newcommand{\Var}{\mathbb{V}}

\newcommand{\D}{\mathcal D}

\newcommand{\tU}{\tilde U}

\newcommand{\C}{\mathcal C}

\newcommand{\beq}{\begin{equation}}
\newcommand{\eeq}{\end{equation}}
\newcommand{\gf}{generating function}
\newcommand{\gfs}{generating functions}

\def\emm#1,{{\em #1}}

\newcommand{\R}{R}
\newcommand{\Ptr}{P}
\newcommand{\Tc}{T}

\newcommand{\G}{\mathcal G}

\newcommand{\Pc}{P}

\newcommand{\Rt}{R}

\newcommand{\Tt}{T}

\newcommand{\Pt}{P}
\newcommand{\Td}{T}

\newcommand{\Pd}{P}
\newcommand{\Po}{P}

\newcommand{\Ex}{{ \rm Ex}}

\newcommand{\De}{{\rm Ch}}
\tolerance=10000


\begin{document}
\title[Prudent self-avoiding walks]
{Families of  prudent self-avoiding  walks}

\author{Mireille Bousquet-M\'elou}

\address{CNRS, LaBRI, Universit\'e Bordeaux 1, 351 cours de la Lib\'eration,
  33405 Talence Cedex, France}
\email{mireille.bousquet@labri.fr}
\thanks{MBM was  partially supported by the French ``Agence Nationale
de la Recherche'', project SADA ANR-05-BLAN-0372.} 
\keywords{Enumeration, self-avoiding walks, D-finite series}

\begin{abstract}
A self-avoiding walk (SAW) on the square lattice is \emm prudent, if it
never takes a step  towards a vertex it has already visited. Prudent
walks differ from most classes of SAW that have been counted so far in that
they can wind around their starting point. 

Their enumeration was first addressed by Pr\'ea 
in 1997. He defined 4 classes of prudent walks, of increasing generality, and
wrote a system of recurrence relations for each of them. However,
these relations involve more and more parameters as the generality of
the class increases.

The first class actually consists of \emm partially directed walks,,
and its \gf\ is well-known to be rational. The second class was proved
to have an algebraic (quadratic) \gf\ by Duchi 
(2005). Here, we solve exactly the third class, which turns out to
be much more complex: its \gf\ is not algebraic, nor even D-finite. 

The fourth class --- general prudent walks --- is the only 
isotropic one, and still defeats us. However, we design an isotropic
family of  prudent  walks on the triangular lattice, which we
count exactly. Again, the \gf\ is proved to be non-D-finite.

We also study the asymptotic properties of these classes of walks, with
the (somewhat disappointing) conclusion that their endpoint moves away
from the origin at a \emm positive speed,. This is confirmed visually
by the random generation procedures we have designed.

\end{abstract}
\maketitle

\date{\today}


\section{Introduction}

\subsection{Families of self-avoiding walks}

The study of self-avoiding walks  is a famous ``elementary'' problem in
combinatorics, which is also of interest in probability theory and in
statistical physics~\cite{madras-slade}.  Recall that, given  a lattice with some
origin $O$, a self-avoiding walk (SAW) is a lattice path starting from
$O$ that does not 
visit the same vertex  twice (Fig.~\ref{fig:saw}).

\vskip -5mm 
\begin{figure}[htb]
\includegraphics[height=3cm]{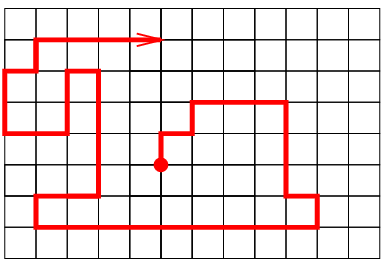}\hskip 20mm
\includegraphics[height=4cm]{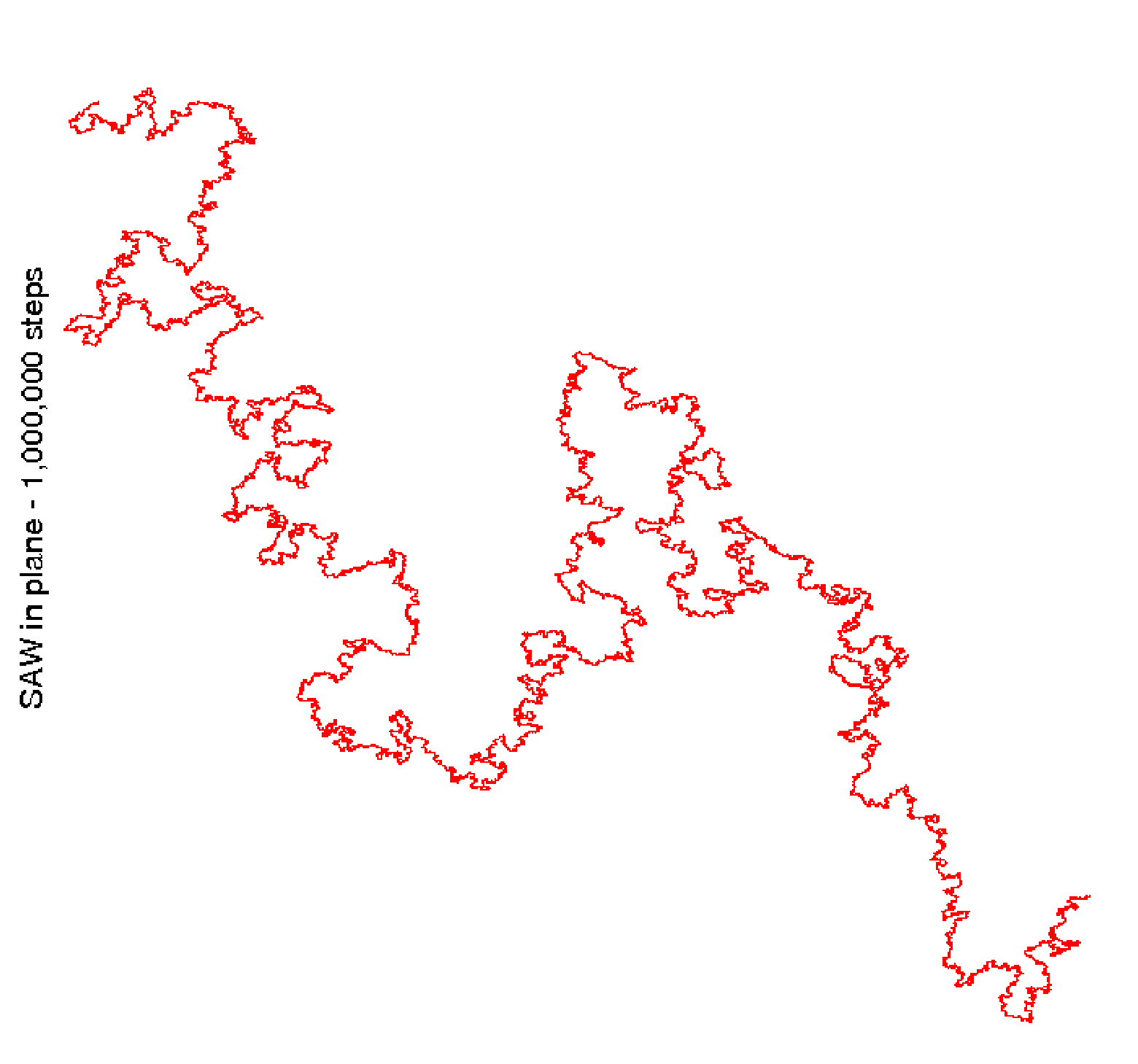}
\caption{A self-avoiding walk on the square lattice, and a
  (quasi-)random SAW of length 1,000,000, constructed by Kennedy using a
  pivot algorithm~\cite{kennedy-pivot-SAW}.} 
\label{fig:saw}
\end{figure}

It is strongly believed that, for two-dimensional lattices, the number
$c(n)$ of $n$-step SAW and the average end-to-end distance of these
walks satisfy
$$
c(n)\sim \alpha \mu^n n^\gamma \quad \hbox{and} \quad \E(D_n) \sim \kappa n^{\nu}
$$
where $\gamma=11/32$ and $\nu=3/4$. The growth constant $\mu$ is
lattice-dependent. Several independent but so far non-rigorous
methods  predict that $\mu=\sqrt{2+\sqrt 2}$ on the 
honeycomb lattice. Moreover,  numerical 
studies suggest that $\mu$ may also be a bi-quadratic number for the square
lattice~\cite{jensen-guttmann}.
On the probability side, it has been proved that, if the scaling limit
of SAW exists and has some conformal invariance property,  it must
be described by the process SLE(8/3) (stochastic Loewner
evolution)~\cite{lawler-schramm-werner}. This would imply that the predicted values of $\gamma$ and $\nu$ are correct.

The fact that all these conjectures only deal with \emm asymptotic,
properties of SAW  tells us how  far 
the problem is from the reach of \emm exact, enumeration. The followers of
this discipline 
thus focus on the study of sub-classes of SAW. 
A simple family consists of
 \emm  partially directed, walks, that is, self-avoiding walks formed of North, East
 and West steps. It is easy to see that their \gf\ is 
 rational~\cite[Example~4.1.2]{stanley-vol1}, 
\beq\label{part-dir-sol}
\sum_n c(n) t^n= \frac{1+t}{1-2t-t^2},
\eeq
which gives 
$c(n) \sim \alpha \mu^n$, with $\mu=1+\sqrt 2=2.41...$
The above series can be refined by taking into account
the coordinates  $(X_n,Y_n)$ of the endpoint, and the analysis of the result
gives:
$$
\E(X_n)=0, \quad \E(X_n^2) \simeq n  \quad \hbox{and}\quad \E(Y_n)\simeq
n.
$$
We use the notation $a_n\simeq b_n$ as a shorthand for $a_n\sim \alpha
b_n$ for some positive constant $\alpha$.

\medskip
The  prudent walks studied in this paper form a more
general class of SAW which have a natural kinetic description: a walk
is \emm prudent, if it
never takes a step pointing towards a vertex it has already
visited. In other words, the walk is so cautious that it only takes
steps in directions where the road is perfectly clear. Various
examples are shown on Fig.~\ref{fig:square}. In particular,  partially directed walks are prudent. 
Note that it is equivalent to require that the new step avoids the past
convex hull of the walk:  
hence prudent walks are the discrete counterpart of the walks
evolving in discrete time but  continuous  space  studied by
Angel \emm et al.,~\cite{angel-benjamini-virag}.

\begin{figure}[pbth]
  \begin{center}
    \includegraphics[scale=0.6]{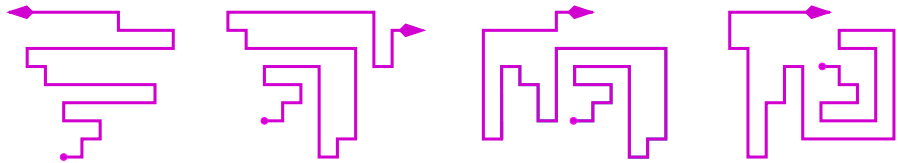}
    \caption{Four prudent walks: the first one is  partially directed
      (or: 1-sided), the other    ones are respectively     
      2-sided, 3-sided and 4-sided.}
    \label{fig:square}
  \end{center}
\end{figure}

These walks have already been studied in the past under different
names: self-directed walks~\cite{turban-debierre1}, outwardly directed
SAW~\cite{santra}, exterior SAW~\cite{prea}, and finally prudent
walks~\cite{duchi,guttmann}. We refrain from the temptation of inventing one more name
and stick to the latter terminology. The
first two papers above deal with Monte-Carlo
simulations. Pr\'ea~\cite{prea} was, to our knowledge, the first to
address enumerative questions. He wrote recurrence relations
defining an array of numbers $c(n;i,j,h)$ that count 
prudent walks  according to their length ($n$) and to three additional
\emm catalytic, parameters ($i, j, h$).  By this, we mean that these
parameters are essential to the existence of these recurrence
relations, and that it is far from obvious how to derive from them a
recursion for, say,  some numbers $c(n;i,j)$ that would only take into
account two of the catalytic parameters (or for $c(n)$).  Pr\'ea also defined four natural
families of prudent walks of increasing generality, called \emm
$k$-sided,, for $k$ 
ranging from 1 to 4. In particular, $1$-sided walks coincide with partially
directed walks, and  $4$-sided walks coincide with general prudent
walks. (Precise definitions will be given below.)
He wrote recurrence relations for each of these classes: three
catalytic parameters are needed for general
($4$-sided) prudent walks, but two suffice for $3$-sided 
walks, while one is enough for $2$-sided walks. No catalytic parameter
is needed for 1-sided walks. This reflects
the increasing generality of these four classes of walks.

 Recall that the \gf \  of 1-sided walks (partially
directed walks) is rational~\eqref{part-dir-sol}. 
Duchi~\cite{duchi} proved that  2-sided walks have an algebraic
(quadratic) \gf. She also found an algebraic \gf\ for 3-sided walks,
but there was a subtle flaw in her derivation, which was detected by
Guttmann~\cite{guttmann-comm}.  He and Dethridge performed a
numerical study of prudent 
walks, in order to get an idea of their asymptotic enumeration, and of
the properties of the associated \gfs~\cite{guttmann}. In particular,
they conjectured that the 
length \gf\ of general prudent walks
 is not \emm D-finite,, that is, does not satisfy any linear
 differential equation with polynomial coefficients. This implies that
 it is not algebraic.


\subsection{Contents}\label{sec:contents}
In Section~\ref{sec:equations} of this paper, we collect
functional equations  that define the \gfs\ of the four classes of
prudent walks introduced by Pr\'ea~\cite{prea}. This is not really original, as these
 equations are basically equivalent to Pr\'ea's recurrence
relations. Moreover, similar equations were
written by Duchi~\cite{duchi}. Our presentation may  be a bit more systematic.

In Sections~\ref{sec:2-sided} to~\ref{sec:solution-triangle} we
address the solution of these 
equations. The case of  1-sided walks is immediate and leads the
above rational \gf. We then recall how  the \emm kernel method, solves
linear equations with \emm one, catalytic variable. In particular, it 
provides the \gf\ of 2-sided walks (Section~\ref{sec:2-sided}). The
extension of this 
method to linear equations with \emm two, catalytic variables is not yet completely
understood, although a number of papers have been devoted to instances
of such equations recently~\cite{mbm-petkovsek2,mbm-kreweras,mbm-mishna,mbm-xin,rechni,marni-rechni,Mishna-jcta}.
Underlying the
\emm bi-variate kernel method, is a certain group, which depends on the
equation. Roughly speaking, the instances that have been studied
suggest that the solution is ``nice'' if the group is
finite. This belief is confirmed by the example of 3-sided walks. The
corresponding equation is associated with an infinite group, but we
can still solve it,
and prove that the \gf\ of these walks is \emm 
not D-finite,, having a rather complex singularity structure 
(Section~\ref{sec:3-sided}). We also
prove that the growth constants of 3-sided walks and 2-sided walks
are the same.   It is actually predicted from numerical
experiments that the growth constant of \emm general, prudent walks is also
the same~\cite{guttmann}.

The final equation, which deals with general prudent walks and involves
\emm three, catalytic variables, still defeats us. This is a bit annoying,
as the other classes are by definition anisotropic. However, our
understanding of the role of catalytic variables leads us to introduce
a new isotropic class of prudent walks, on the triangular lattice, which are
described by (only) two catalytic variables (Fig.~\ref{fig:triangle},
left). Again, the associated equation corresponds to an infinite
group. We  solve it, and prove that the \gf\ of  triangular prudent
 walks is not D-finite, having a natural boundary (Section~\ref{sec:solution-triangle}).

\begin{figure}[pbth]
  \begin{center}
     \scalebox{0.6}{   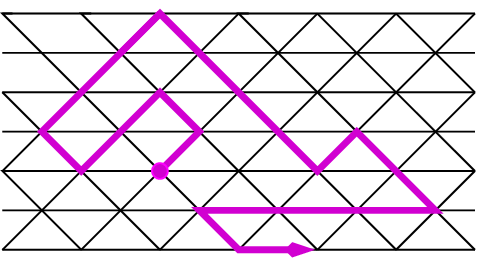}
    \caption{Left: A triangular prudent walk in a box of size 7. 
Right: The two catalytic parameters involved in the enumeration.}
    \label{fig:triangle}
  \end{center}
\end{figure}

We also refine our equations to take
into account parameters related to the end-to-end distance of a
prudent walk: for instance the coordinates of the endpoint, or the size of
the smallest  rectangle containing the walk. Extending our solutions
to these refined equations is harmless, but the conclusion we draw
from these results is somewhat disappointing:  the prudent
walks we can solve drift away from
the origin at a \emm positive speed,. In other words, the end-to-end
distance grows linearly with the length of the walk.  We do
not know what happens for general (4-sided) prudent walks.

Finally, in Section~\ref{sec:random}, we address the uniform random
generation of $n$-step prudent walks. 
Their step-by-step recursive structure
allows for a standard recursive approach, in the spirit
of~\cite{nijenhuis-wilf-book}. 
This approach first requires a precomputation stage,  followed by a
generation stage, which is usually 
linear. We emphasize that the (costly) precomputation may require less
information than the generation itself, and we use this to optimize the
precomputation stage.   Our final procedures involve the
precomputation and storage of  up to $O(n^4)$ numbers (for general
prudent walks), so that the
typical length we can reach is a few hundred. This still provides
interesting pictures
(Figs.~\ref{fig:random-4s},~\ref{fig:random-2s},~\ref{fig:random-3s},~\ref{fig:random-t}).

\begin{figure}[pbth]
\includegraphics[height=3cm,width=3cm]{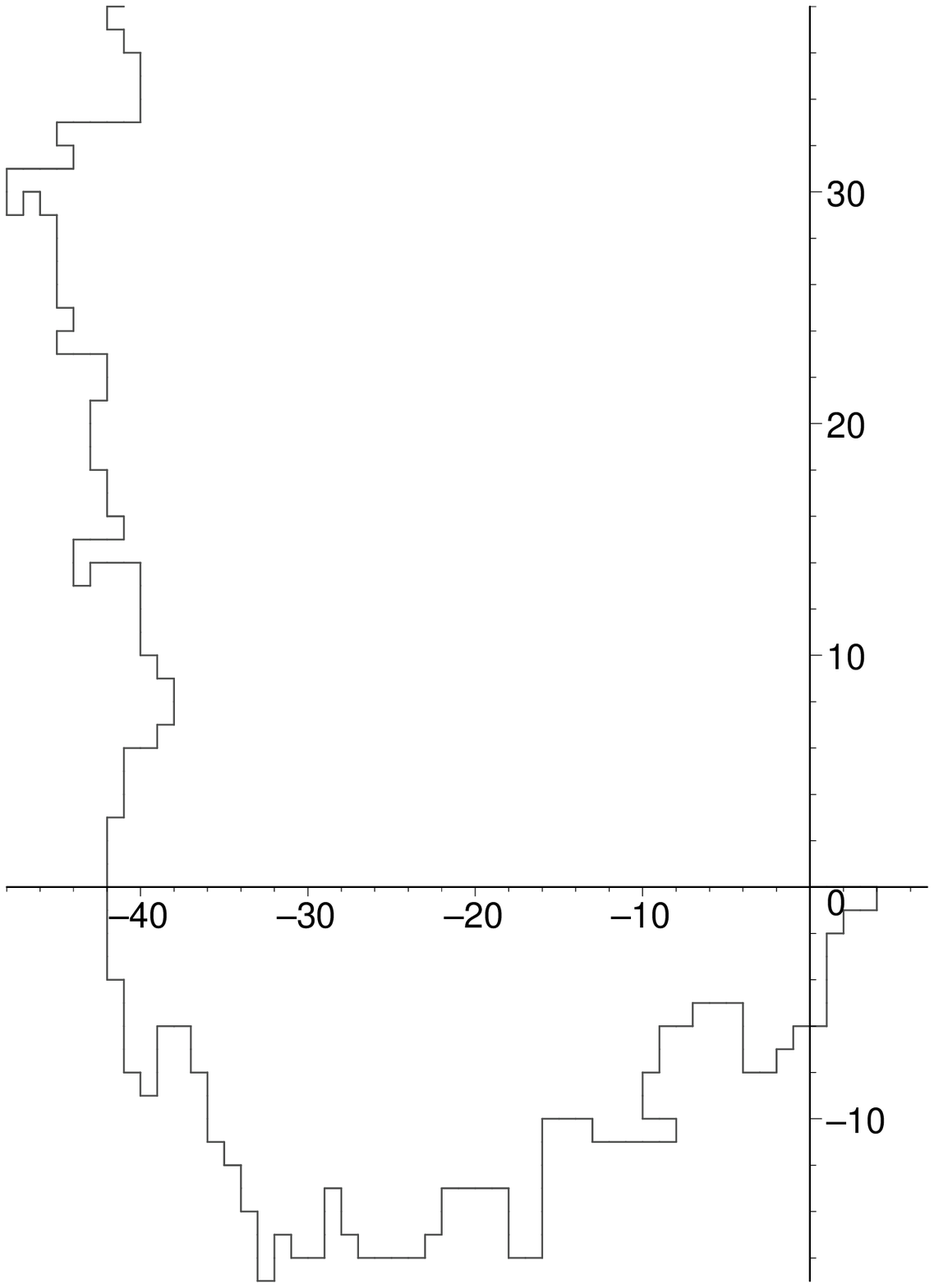}
\hskip 10mm
\includegraphics[height=3cm,width=3cm]{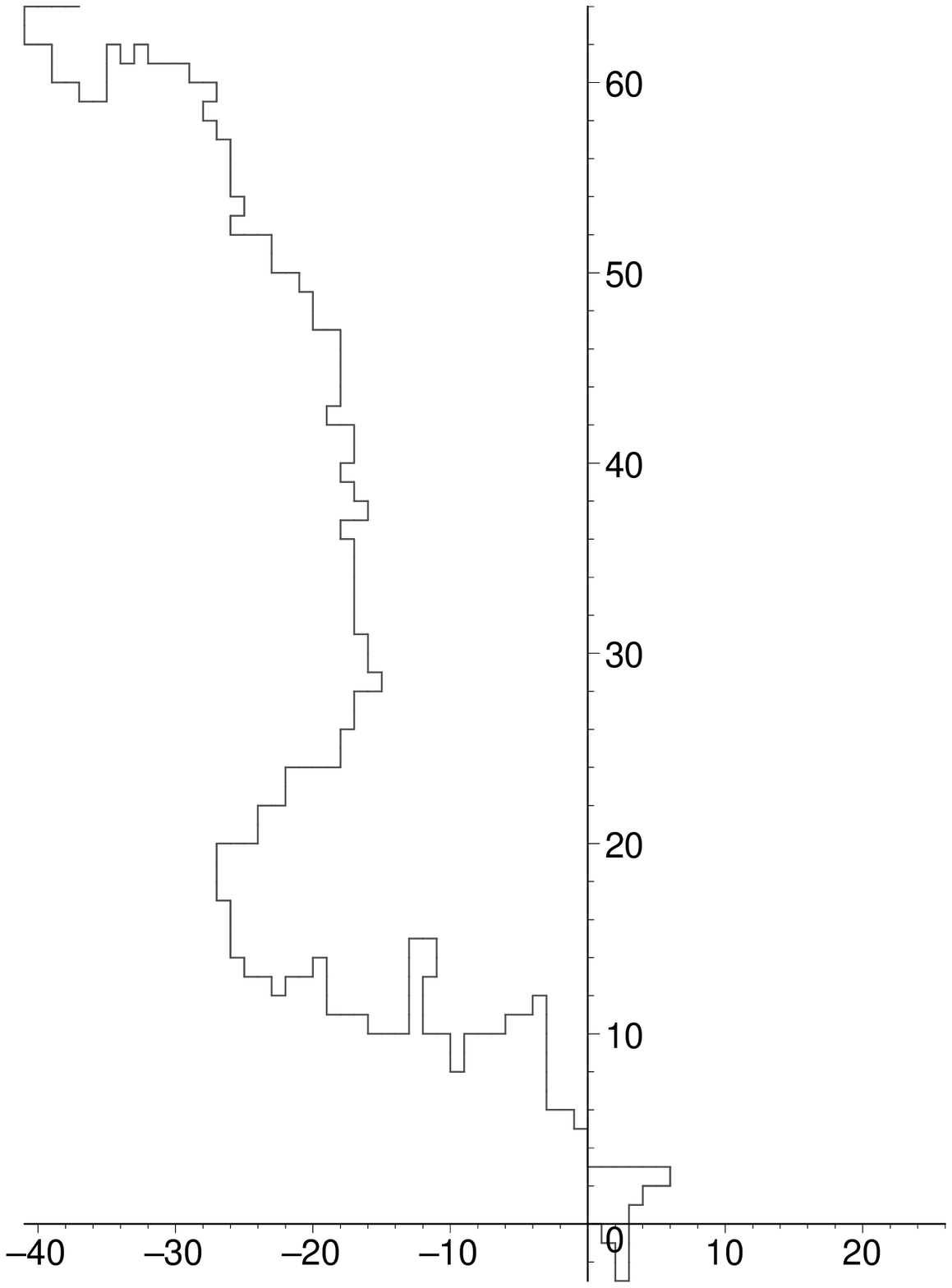}

\vskip 5mm
\includegraphics[height=3cm,width=3cm]{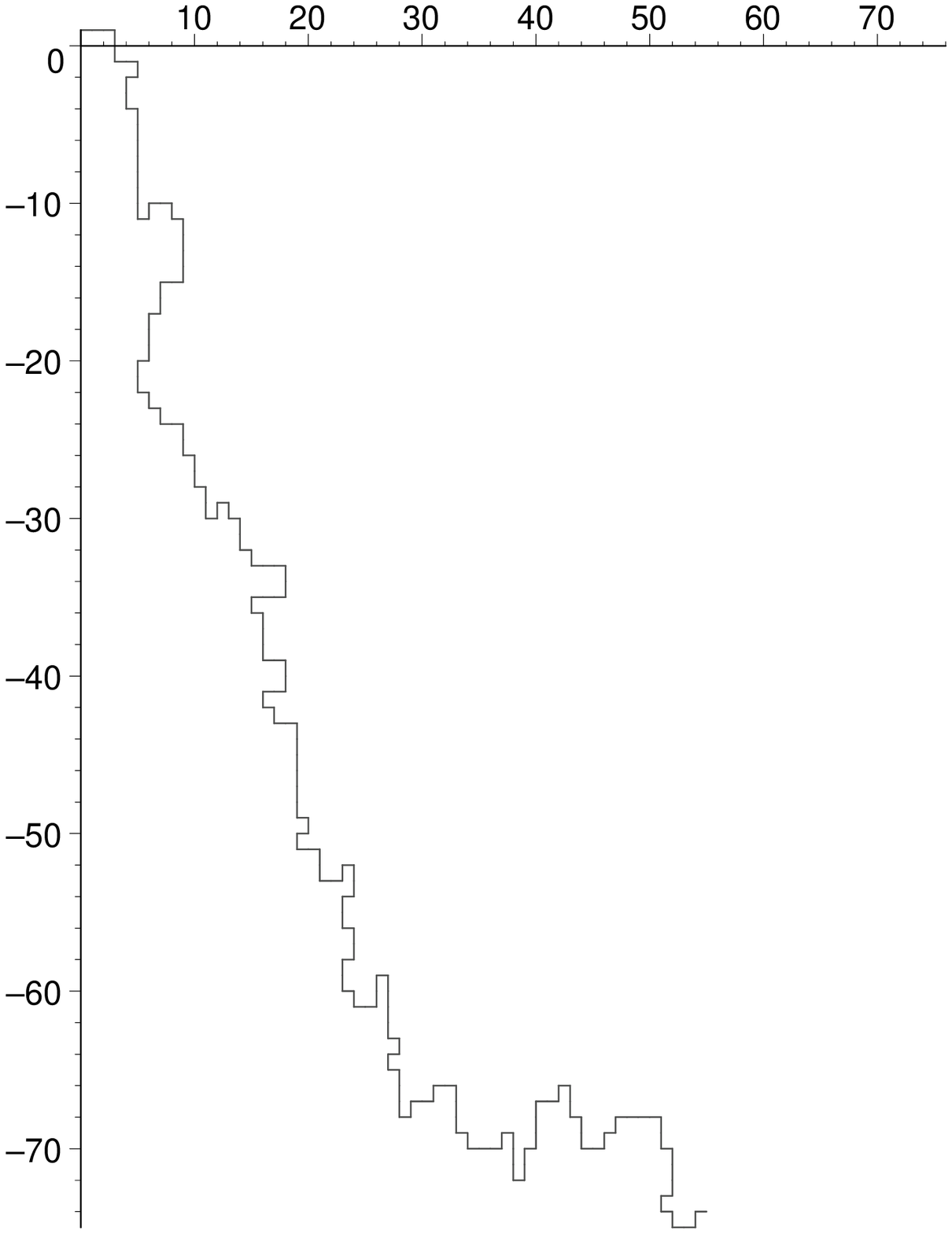} 
\hskip 10mm
\includegraphics[height=3cm,width=3cm]{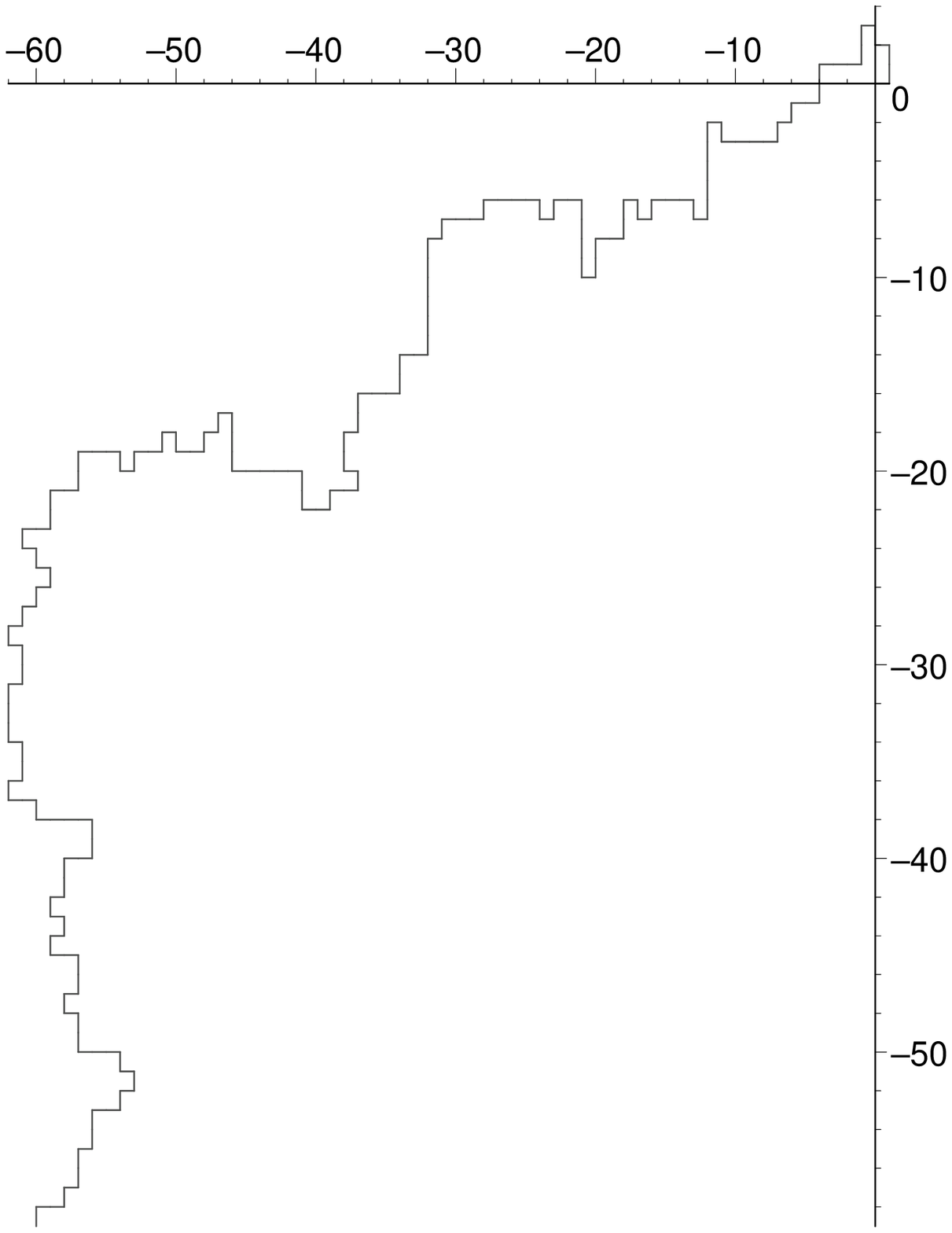}
\caption{Random prudent walks. 
}
\label{fig:random-4s}
\end{figure}

\subsection{Families of prudent walks}
Let us conclude this long introduction with some definitions and notations.
The \emm box, of a square lattice walk is
the smallest rectangle that contains it. It is not hard to see that
the endpoint of a prudent walk is always on the border of the
box. This means that every new step either walks on the border of the
box, or moves one of its four edges.

Using this box, we can give a kinetic description of 
partially directed walks: 
a prudent walk is partially directed if its endpoint, as the walk
grows,  always lies on the top  of the box. This is why partially
directed walks  are also
called \emm $1$-sided,. The generalisation of this terminology is natural: a
prudent walk is \emm $2$-sided, if its endpoint always  lies either on the top, or on the right edge of the box. It is \emm $3$-sided, if its endpoint
is always on the top, right or left edge of the box\footnote{As
  noticed by Uwe Schwerdtfeger, we
actually  require this to hold as well for non-integral points, that
is, when the walk is considered as a continuous process. The aim of
this is to avoid considering the walk ESW, among others, as a 3-sided walk.}. Of course, \emm $4$-sided, walks coincide with
general prudent walks (Fig.~\ref{fig:square}).  

Consider now a walk on the triangular lattice. Define the (triangular)
\emm box, of the walk as the smallest triangle pointing North that
contains the walk. The walk is a \emm triangular prudent walk, if
each new step either inflates the box, or walks along one edge of the
box in a prudent 
way (that is, not pointing to an already visited vertex). An example
is shown in Fig.~\ref{fig:triangle}.
Note that this is \emm not, the natural counterpart of a square lattice prudent
walk: this counterpart would just
require the walk to avoid pointing to an
already visited vertex. But then  the natural ``box''  would be a hexagon:
 every new step would either 
inflate the hexagonal box, or walk along its border. 
As we will see below, the number
of edge lengths of the box  (3 for a hexagon, 1 for a triangle) is
directly related to the number of 
catalytic parameters we have to introduce, and this is what makes
triangular prudent walks relatively easy to handle. 

Given a class of walks $\C$, the \emm \gf,\ of walks of $\C$, counted by
their length, is
$$
C(t)=\sum_{w \in \C} t^{|w|},
$$
where $|w|$ denotes the length of the walk $w$. The generalisation of
this definition to the series $C(t;u_1,\ldots, u_k)$ counting walks according to
their length and to $k$ additional parameters is immediate. We will
often drop the length variable $t$, denoting this series $C(u_1,
\ldots, u_k)$. Recall that a one-variable series $C(t)$ is \emm algebraic,
if it satisfies a polynomial equation $P(t,C(t))=0$, and \emm D-finite, if
it satisfies a linear ODE with polynomial coefficients, $P_k(t)
C^{(k)}(t) +\cdots + P_1(t) C'(t)+ P_0(t) C(t)=0$. Every algebraic
series is D-finite. We refer
to~\cite{stanley-vol2} for generalities about these classes of power series. 

\section{Functional equations}\label{sec:equations} 

The construction of functional equations for all the families of
prudent walks we study rely on the same principle, which we first
describe on 1-sided (partially directed) walks. 

Consider a 1-sided walk. If it ends with a horizontal step, we can
extend it in two different ways: either we repeat the last step, or we
change direction and add a North (N) step. Otherwise, the walk is
either empty or ends with a North step, and we have three
ways (N, E and W) to extend it. This shows that North
steps, which move the top edge of the box, play a special role in
these walks. Our functional equation is obtained by answering the
following question: where is the last North step, and what has
happened since then? 

More specifically, let $\Po(t)$ denote the length \gf\ of 1-sided
prudent walks.  The contribution to $\Po(t)$ of walks that contain no
North step  (horizontal walks) is
$$
1+2\sum_{n\ge 1} t^n = \frac{1+t}{1-t}.
$$
The other walks are obtained by concatenating a 1-sided walk, a North
step, and then a horizontal walk. Their contribution is thus
$$
\Po(t) \ t \ \frac{1+t}{1-t}.
$$
Adding these two contributions gives the equation 
$$
\Po(t)= \frac{1+t}{1-t}+ t\  \frac{1+t}{1-t}\Po(t) ,
$$from which we readily derive the rational
expression~\eqref{part-dir-sol}.

The principle of this recursive description extends to $k$-sided walks
for each $k$. We say that a step of a $k$-sided walk is \emm inflating,
if, at the time it was added to the walk, it shifted one of the $k$
edges of the box that are relevant in the definition of $k$-sided
walks. For instance, when $k=2$, an inflating step  moves the top
or right edge of the box. We write our equations by answering the
question: where is the last inflating step, and what has happened
since then?

Since then, the walk has grown \emm without
creating a new inflating step,. What does that mean? 
Assume $k\ge 2$, that the last inflating step was 
North, and that, since then, the walk has taken $m$ East steps. Then
$m$  cannot be arbitrarily large, otherwise one or several of these East
steps would be inflating, having moved the right edge of the box. This
observation explains why we have to take into account other ``catalytic''
parameters in our  enumeration of prudent walks. For instance, for a
2-sided walk, we  keep track of
the distance between the endpoint and the NE corner of the box, using a
new variable $u$ (Fig.~\ref{fig:measures}). For
3-sided walks ending on the top of the box, we keep track of the
distances between the endpoint and the NE and NW corners of the box
(variables $u$ and $v$). For 3-sided walks ending on the right edge of
the box, we keep track of the distance  between the endpoint and the
NE  corner (variable $u$) and of the width of the box
(variable $w$). For
4-sided walks ending on the top of the box, we keep track of the
distances between the endpoint and the NE and NW corners
(variables $u$ and $v$), and of the height of the box (variable
$w$). These parameters, and the names of the corresponding variables,
are schematized in Fig.~\ref{fig:measures}. They give rise to series
with one, two or three catalytic variables. For instance, for 4-sided
walks ending on the top of their box, we will use the series 
$$
\Tc(t;u,v,w) \equiv \Tc(u,v,w)=\sum_{i,j,h} \Tc_{i,j,h} u^i v^j w^h,
$$
where $\Tc_{i,j,h}\equiv\Tc_{i,j,h}(t)$ counts 4-sided walks
ending on the top of their box, at a distance $i$ (resp. $j$) from the
NE  (resp. NW) corner, such that the height of the box is $h$. 
Similar notation will be used for the other classes of walks.

\begin{figure}[pbth]
  \begin{center}
  \scalebox{0.45}{   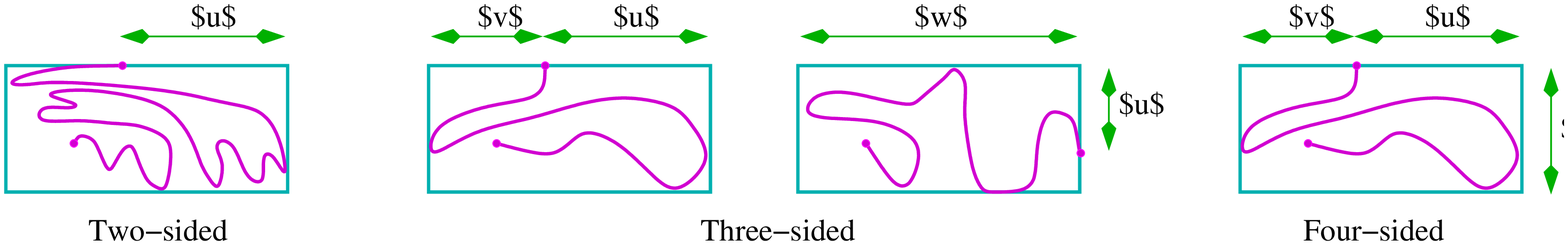}
    \caption{Catalytic variables for $k$-sided walks, $k\ge 2$.}
    \label{fig:measures}
  \end{center}
\end{figure}

Finally, for triangular prudent walks ending on the right edge of
their (triangular) box, we keep track of the distances between the
endpoint and  the SE and N corners of the box (variables $u$ and
$v$, see Fig.~\ref{fig:triangle}).

\subsection{Two-sided prudent walks}

\begin{Lemma}\label{lem:2s}
  The \gf\ $\Td(t;u)\equiv \Td(u)$ of $2$-sided walks ending
  on the top of their    box  satisfies
$$
\left(1- \frac{tu(1-t^2)}{(1-tu)(u-t)}\right)\Td(u)=\frac 1{1-tu}
+ t\ \frac{u-2t}{u-t} \,\Td(t).
$$
The \gf\ of $2$-sided walks, counted by their length
and the distance of the endpoint to the NE corner of the  box, is
$$
\Pd(t;u)= 2\Td(t;u)-\Td(t;0).
$$
\end{Lemma}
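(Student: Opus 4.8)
The plan is to derive the functional equation for $\Td(u)$ by the \emph{last inflating step} decomposition advertised at the start of this section, and then to read off the formula for $\Pd(u)$ from a diagonal symmetry combined with inclusion--exclusion. Throughout, $u$ records the distance from the endpoint to the NE corner of the box, and $\Td(t)$ denotes the specialization of $\Td(u)$ at $u=t$. I would classify the $2$-sided walks ending on the top of their box according to where their last inflating step (the last step moving the top or right edge) occurs. A North inflating step is preceded by a walk whose endpoint sits on the top edge, hence by a walk counted by $\Td(u)$ itself; an East inflating step is preceded by a walk ending on the right edge.

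If the walk has \emph{no} inflating step, then it uses only West steps: a first North or East step already inflates, and a South step pushes the endpoint off the top--right boundary for good. Such walks $W^n$ sit at distance $n$ from the NE corner, contributing $\sum_{n\ge0}(tu)^n=\frac1{1-tu}$. In the case where the last inflating step is North, the prefix is counted by $\Td(u)$, appending the North step leaves the distance to the (new) NE corner unchanged, and afterwards the walk must run \emph{monotonically} along the new top edge: either $b\ge 0$ West steps (distance increases) or $1\le a\le i$ East steps toward the corner (distance decreases, bounded by the current distance $i$). The West tails rebuild a factor $\frac1{1-tu}$, while the bounded East tails give a finite geometric sum $\sum_{a=1}^{i}t^a u^{i-a}=\frac{t(u^i-t^i)}{u-t}$, so that summing over $i$ produces a term proportional to $\frac{\Td(u)-\Td(t)}{u-t}$. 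Together this case contributes $\frac{t\,\Td(u)}{1-tu}+\frac{t^2(\Td(u)-\Td(t))}{u-t}$.

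For the case where the last inflating step is East, the reflection in the diagonal $y=x$ (which exchanges North$\leftrightarrow$East and the top and right edges, while fixing the NE corner and the distance to it) shows that the prefix ending on the right edge is again counted by $\Td$; appending the East step and then climbing North to the NE corner yields $t\,\Td(t)$. This is the one delicate point, and the step I expect to be the main obstacle: a \emph{prudence} argument is needed to show that this climb may \emph{not} be prolonged by West steps along the top, since the top edge already carries a visited vertex at its height, so a Westward step there would point at an already-visited vertex. By contrast, the West tail \emph{is} admissible in the North case, where the North step has just created a brand-new, empty top level. It is exactly this asymmetry that produces the coefficient $t\frac{u-2t}{u-t}$ rather than a spurious term carrying $\frac1{1-tu}$; getting the prudence bookkeeping right is where the real work lies.

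Collecting the three contributions gives $\Td(u)=\frac1{1-tu}+\frac{t\,\Td(u)}{1-tu}+\frac{t^2(\Td(u)-\Td(t))}{u-t}+t\,\Td(t)$. Moving the $\Td(u)$ terms to the left and reducing over the common denominator $(1-tu)(u-t)$ converts the kernel into $1-\frac{tu(1-t^2)}{(1-tu)(u-t)}$ and the right-hand side into $\frac1{1-tu}+t\frac{u-2t}{u-t}\,\Td(t)$, which is the asserted equation. Finally, for $\Pd(u)$ I would note that a $2$-sided walk ends on the top edge, on the right edge, or (when it ends at the NE corner) on both. The diagonal reflection is a length- and distance-preserving bijection between the first two families, so each is counted by $\Td(u)$, while their overlap consists of the walks ending at the NE corner, namely those of $u$-degree $0$, counted by $\Td(0)$. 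Inclusion--exclusion then yields $\Pd(u)=2\Td(u)-\Td(0)$, as claimed.
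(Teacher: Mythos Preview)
Your argument is correct and follows essentially the same last-inflating-step decomposition as the paper, with only cosmetic differences: you assign the ``zero extra steps'' case after a North inflating step to the West tail ($b\ge 0$) while the paper assigns it to the East tail ($k\ge 0$), but the resulting coefficient of $\Td(u)$ is the same either way, and your treatment of the East-LIS case (including the prudence justification you flag as the main obstacle, which the paper leaves implicit) and the inclusion--exclusion for $\Pd(u)$ match the paper exactly.
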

\begin{proof}
  We partition   the set of  2-sided walks ending on the top 
  of their  box into 3 classes, depending on the
  existence and   direction of the last inflating step (LIS). This
  step, if it exists,  has moved the right or top   edge of the box.  
   \begin{itemize}
    \item[1.] Neither the top  nor the right edge has ever moved:
    the walk is a sequence of West steps. The \gf\ for this class is 
$$
\frac 1 {1-tu}.
$$
 \item[2.] The LIS goes East. This implies that the endpoint of the
   walk was on the right edge of the box before that step. After that
   East step,  the walk has made a sequence of N steps to reach the
   top  of the box. Observe that, by symmetry,
   the series $\Td(t;u)$ also counts walks ending on the right edge of
   the box by
   the length and the distance between the endpoint and the NE
   corner. These two observations   give the  \gf\ for this class as
$$
t\sum_{i\ge 0} \Td_i t^i= t \Td(t).
$$
\item[3.]  The LIS goes North. After this step, there is either
    an (unbounded) sequence of West steps, or a bounded sequence of
    East steps. This gives the \gf\ for this class as
 $$
\frac{t^2u}{1-tu} \Td(u) + t \sum_{i\ge 0} \Td_i\sum_{k=0}^i t^k
u^{i-k}
=
\frac{t^2u}{1-tu} \Td(u) + \frac{t}{u-t} \left(u\Td(u)- t\Td(t)\right).
$$
  \end{itemize}
Adding the 3 terms  gives the  functional equation satisfied by
$\Td(u)$.

The expression of $\Pd(t;u)$ relies  on an inclusion-exclusion
argument: we first double  the contribution of $\Td(u)$ to take into
account walks ending on the right edge of the box, and then
subtract the series $\Td(0)$ counting those that end at the NE corner.
\end{proof}


\subsection{Three-sided prudent walks}


\begin{Lemma}\label{lem:3sided}
  The \gfs\ $\Tt(t;u,v)\equiv \Tt(u,v)$ and $\Rt(t;u,w)\equiv
  \Rt(u,w)$ that count respectively  $3$-sided walks ending on
  the top  and on the   right edge of their 
  box  are related by
  \begin{eqnarray}
\left(1- \frac{tuv(1-t^2)}{(u-tv)(v-tu)}\right) \Tt(u,v)=
1+tu \Rt(t,u) +tv \Rt(t,v)
-\frac{t^2v}{u-tv} \Tt(tv,v) -\frac{t^2u}{v-tu} \Tt(u,tu)
\label{eq:Tt}\\
\left(1-\frac{tuw(1-t^2)}{(u-t)(1-tu)}\right)\Rt(u,w)
=\frac 1 {1-tu} +t\Tt(tw,w) -\frac{t^2w}{u-t}\Rt(t,w).\hskip 37mm\label{eq:Rt}
  \end{eqnarray}
The \gf\ of $3$-sided walks, counted by the length and by the
width of the  box, is
$$
\Pt(t;u)=  \Tt(t;u,u)+2\Rt(t;1,u)-2 \Tt(t;u,0)-\frac t{1-t} .
$$ 
\end{Lemma}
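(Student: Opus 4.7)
The plan is to derive the two functional equations by adapting the ``last inflating step'' (LIS) decomposition used in the proof of Lemma~\ref{lem:2s}, and then to obtain the formula for $\Pt$ by a direct inclusion--exclusion argument.

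For the equation on $\Tt(u,v)$, I would partition the $3$-sided walks ending on the top of their box by the direction of their LIS. The walk with no LIS is the empty walk, contributing $1$. When LIS $=$ N, the walk before the LIS is itself in $\Tt$, and what has happened since consists of a (possibly empty) pure sequence of East or West steps along the new top, bounded above by the distances $i$ and $j$ to the NE and NW corners respectively. Summing these two bounded geometric series and combining produces the kernel factor $\frac{tuv(1-t^2)}{(u-tv)(v-tu)}$ of $\Tt(u,v)$, together with the correction terms $-\frac{t^2u}{v-tu}\Tt(u, tu)$ and $-\frac{t^2v}{u-tv}\Tt(tv, v)$ that record the truncation of the series at the NE and NW boundaries. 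When LIS $=$ E, the walk before the LIS is in $\Rt$ with distance $i$ to NE and width $h$, and the post-LIS continuation consists of $i$ North steps reaching the new NE corner of the (widened) box; summing over $i$ and $h$ collapses this into the clean substitution $tv\,\Rt(t,v)$. By the left--right reflection symmetry, the LIS $=$ W case contributes $tu\,\Rt(t,u)$.

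For the equation on $\Rt(u, w)$, the same scheme applies to $3$-sided walks ending on the right edge. The ``no LIS'' class now contains the empty walk together with every pure Southern sequence (since S steps are non-inflating for $3$-sided walks and keep the endpoint on the degenerate right edge of the vertical box), contributing $\frac{1}{1-tu}$. The LIS $=$ E case produces, in close analogy with the two-sided calculation, both the kernel factor $\frac{tuw(1-t^2)}{(u-t)(1-tu)}$ of $\Rt(u,w)$ and the correction $-\frac{t^2w}{u-t}\Rt(t,w)$. The contributions from LIS $=$ N and LIS $=$ W must be handled together: in either case, after the LIS the walk has to reach a corner of the new top and then run across the top edge in order to finish on the right edge. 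The careful enumeration of these post-LIS traversals, together with the left--right symmetry that identifies left-ending walks with $\Rt$, will collapse into the single term $t\,\Tt(tw, w)$.

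For the formula expressing $\Pt(t; u)$, I would apply inclusion--exclusion to the three possible edges on which the endpoint of a $3$-sided walk may lie. Walks ending on the top, refined by length and box-width $h = i + j$, are counted by $\Tt(t; u, u)$ (specializing both distance variables to $u$ so that $u^iv^j = u^{i+j} = u^h$). Walks ending on the right edge are counted by $\Rt(t; 1, u)$, and by left--right symmetry the same series counts walks ending on the left edge, giving the coefficient $2$. Walks ending at a top corner, doubly counted because they lie on both the top and an adjacent vertical edge, correspond to $\Tt(t; u, 0) = \Tt(t; 0, u)$ (the two cases being equal by symmetry), giving the subtraction $-2\Tt(t; u, 0)$. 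The walks ending on both the right and left edges simultaneously have width zero and are precisely the empty walk together with all pure Southern sequences, contributing $\frac{1}{1-t}$, while the triple intersection reduces to the empty walk, contributing $1$. Combining these terms yields the net correction $-\frac{1}{1-t}+1=-\frac{t}{1-t}$, and gives the stated formula.

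The main obstacle I anticipate lies in the LIS $=$ N and LIS $=$ W contributions to $\Rt$: the post-LIS walk has to move from one vertical edge of the new box, across the entire length of its (perhaps freshly widened) top, to the opposite vertical edge, and the book-keeping for these two-step traversals has to collapse precisely into $t\,\Tt(tw, w)$. Ensuring the delicate cancellation of the various geometric-series boundary terms against the substitution effects is the most subtle part of the derivation.
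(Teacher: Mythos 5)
Your overall strategy --- decomposing by the last inflating step and then applying inclusion--exclusion over the three relevant edges --- is exactly the paper's. Your derivation of the equation for $\Tt(u,v)$ and your formula for $\Pt(t;u)$ match the intended argument. (One small inconsistency in the inclusion--exclusion: pure North walks also end on the left and right edges of their degenerate box --- they must, since $\Rt(1,u)$ counts them --- so the pairwise intersection ``left $\cap$ right'' is $\frac{1+t}{1-t}$ and the triple intersection is $\frac{1}{1-t}$, not $\frac{1}{1-t}$ and $1$ as you state; your two omissions enter with opposite signs and cancel, so the net correction $-\frac{t}{1-t}$ is still correct.)

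The genuine gap is in the equation for $\Rt(u,w)$, precisely at the point you flag as ``the main obstacle''. There is no LIS $=$ W contribution to combine with the LIS $=$ N contribution: for a $3$-sided walk ending on the right edge, the last inflating step cannot go West. Indeed, after a West inflating step the endpoint sits alone in the new leftmost column; to reach the right edge it would have to climb to the NW corner and then take East steps along the top row, but the top row already contains a visited vertex strictly to the East (the vertex that attained the maximal height before the West step, which lies in an older column), so the very first East step would point at a visited vertex and is forbidden by prudence. Consequently the term $t\,\Tt(tw,w)$ comes entirely from LIS $=$ N, by a short direct computation: the walk before the North step ends on the top at distances $i$ and $j$ from the NE and NW corners; the North step contributes $t$ and must be followed by exactly $i$ East steps to reach the new NE corner (fewer leaves the endpoint on the top rather than the right edge, more would be inflating, and a subsequent South step down the right edge would point at the visited vertex defining that edge); the resulting walk is at distance $0$ from the NE corner with width $i+j$, whence $t\sum_{i,j}\Tt_{i,j}t^{i}w^{i+j}=t\,\Tt(tw,w)$. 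Your proposal not only leaves this case open but anticipates a cancellation between N and W contributions that does not exist; without the observation that the W class is empty, the ``careful enumeration'' you defer cannot be completed as described.
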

\begin{proof} 
   We partition   the set of  3-sided walks ending on the top 
  of their  box into 4 subsets, depending on the
  existence and direction of the LIS, which has moved the right,
  left or top edge of the box.
   \begin{itemize}
    \item[1.] There is no inflating step at all: the walk is empty and
    contributes $1$ to the \gf. 
\item[2.] The LIS goes East. This case is analogous to Case (2) of
  2-sided walks, with \gf\ 
$$
tv \sum_{i,j} \Rt_{i,j} t^i v^j =tv\Rt(t,v).
$$
\item[3.]  Symmetrically, the  case where the LIS goes West is counted by
$$ tu \Rt(t,u).$$
\item[4.]  If the LIS is a North step, it is followed by a
  bounded number of West steps, or by a bounded number of East steps.
This case is counted by:
   \begin{multline*}
    t\sum_{i,j \ge 0} \Tt_{i,j} \left(
\sum _{k=0}^i t^k u^{i-k} v^{j+k} + \sum_{k=0}^j t^k u^{i+k}
v^{j-k} -u^iv^j\right) 
\\=
\frac{t}{u-tv} \left( u\Tt(u,v) -tv \Tt(tv,v)\right)
+\frac{t}{v-tu} \left( v\Tt(u,v) -tu \Tt(u,tu)\right)
-t \Tt(u,v).
  \end{multline*}
  \end{itemize}
Adding the 4 terms  gives the first equation of the lemma.

For 3-sided walks ending on the right edge of their  box,
the last inflating step cannot go West. Three cases remain:
  \begin{itemize}
       \item[1.] There is no inflating step at all: the walk consists
    of South steps. The \gf\ for this class is
$$
\frac 1 {1-tu}.
$$
\item[2.] The LIS goes East. This case is analogous to Case (3) of
  2-sided walks: the LIS is followed by an unbounded number of South
  steps, or by a bounded number   of North steps. The \gf \ is
$$
\frac{t^2uw}{1-tu} \Rt(u,w)+ \frac{tw}{u-t}\left( u\Rt(u,w)-t\Rt(t,w)\right).
$$
\item[3.]  The LIS goes North. This case is analogous
  to Case (2) of 2-sided walks.  The \gf\ is found to be
$$
t\Tt(tw,w).
$$
  \end{itemize}
Adding the 3 terms gives the functional equation for $\Rt(u,w)$. 

The expression of $\Pt(t;u)$ again  relies on an inclusion-exclusion
argument, based on the enumeration of walks ending on a prescribed set
of edges of their box.
\end{proof}


\subsection{General prudent walks on the square lattice}


\begin{Lemma}\label{lem:4-sided}
  The \gf\ $\Tc(t;u,v,w)\equiv \Tc(u,v,w)$ of prudent walks ending
  on the top of their    box  satisfies
$$
\left(1- \frac{tuvw(1-t^2)}{(u-tv)(v-tu)}\right) \Tc(u,v,w)=
1+\G(w,u)+\G(w,v)-\frac{tv}{u-tv} \G(v,w) -\frac{tu}{v-tu} \G(u,w)
$$
with $\G(u,v)\equiv \G(t;u,v)= tv\Tc(t;u,tu,v)$.

The \gf\ of  prudent walks, counted by the length and the
half-perimeter of the  box, is
$$
\Pc(t;u)= 1+4\Tc(t;u,u,u)-4\Tc(t;0,u,u).
$$
\end{Lemma}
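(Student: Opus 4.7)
My plan is to adapt the "last inflating step" (LIS) decomposition used for the 1-, 2- and 3-sided cases, now with three catalytic variables and exploiting the full rotational and reflection symmetries of 4-sided walks. I partition walks ending on the top of their box according to the LIS, which is either undefined (empty walk, contributing $1$) or points N, E, W or S. A key preliminary observation is that \emph{LIS $= S$ is impossible} for a walk ending on top: immediately after such a step the endpoint sits at $(a, y_b)$ with $(a, y_b + 1)$ already visited, leaving only horizontal non-inflating moves; combined with the fact that the pre-LIS walk visits at least one vertex in every row $y \in [y_b + 1, H]$, a short case analysis shows that the endpoint cannot return to $y = H$ without either revisiting a vertex or creating a new inflating step.

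For LIS $= E$, I would invoke the $90^\circ$ rotational symmetry of 4-sided walks: a walk ending on the right edge with parameters (dist-to-NE, dist-to-SE, width) $= (a, b, q)$ is counted by the same coefficient $T_{b, a, q}$ that appears in $\Tc(u, v, w)$. After the $E$ step the endpoint is at distance $a$ below the new NE corner, and the only non-inflating continuation landing on top consists of exactly $a$ North steps; summing and re-indexing yields $tv\,\Tc(w, tw, v) = \G(w, v)$. The left-right reflection $u \leftrightarrow v$ then gives $\G(w, u)$ for LIS $= W$. For LIS $= N$, the pre-LIS walk ends on top of a box of height $h$; after the $N$ step the non-inflating continuation is a run of $k \le i$ East steps or of $k \le j$ West steps (with $k = 0$ counted once), since S would revisit and an E/W mix would retrace. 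Summing the two geometric series, collecting the $\Tc(u, v, w)$ pieces via the identity
\[
\frac{u}{u - tv} + \frac{v}{v - tu} - 1 = \frac{uv(1 - t^2)}{(u - tv)(v - tu)},
\]
and rewriting the boundary terms through $\Tc(tv, v, w) = \G(v, w)/(tw)$ and $\Tc(u, tu, w) = \G(u, w)/(tw)$, this case contributes
\[
\frac{tuvw(1 - t^2)}{(u - tv)(v - tu)}\,\Tc(u, v, w) - \frac{tv}{u - tv}\,\G(v, w) - \frac{tu}{v - tu}\,\G(u, w).
\]
Adding the four contributions and moving the $\Tc$-multiple to the left produces the stated functional equation.

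The formula for $\Pc(t; u)$ follows by inclusion-exclusion over the four edges of the box. The specialisation $\Tc(u, u, u)$ weights a top-ending walk by $u^{\text{width} + \text{height}} = u^{\text{half-perim}}$, and by the fourfold rotational symmetry $4\,\Tc(u, u, u)$ counts each nonempty prudent walk once per edge its endpoint touches; a corner endpoint is therefore counted twice, and this over-count is removed by subtracting $4\,\Tc(0, u, u)$ (walks ending at the NE corner, tallied over the four corners by symmetry). The empty walk lies on all four edges and all four corners, so it nets to zero in the difference and must be restored by the additive $+1$. The main obstacles I expect are threefold: establishing cleanly that LIS $= S$ cannot occur for top-ending walks (a self-avoidance argument with no analogue in the 3-sided case, where the bottom edge is frozen by definition); tracking the rotational relabelling of catalytic variables precisely in the LIS $= E, W$ cases so that the $\G$-arguments come out in the right order; and performing the algebraic bookkeeping (geometric sums, the $(1 - t^2)$ identity, and the $\G$-substitutions) that collapses Case $N$ into the kernel form exhibited in the lemma.
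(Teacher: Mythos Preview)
Your proposal is correct and follows essentially the same approach as the paper: a partition by the direction of the last inflating step, with the empty walk giving the constant $1$, the E/W cases handled via the top--right symmetry to produce $\G(w,v)$ and $\G(w,u)$, and the N case yielding the kernel term plus the two boundary $\G$-terms after the same geometric-sum identity. The paper simply asserts that the LIS cannot go South, whereas you sketch an argument for it; your argument is on the right track (once on the new bottom edge, non-inflating prudent steps cannot climb back up because any attempt to go N along a vertical edge of the box points toward a previously visited vertex on that edge), so if anything you are being more careful than the paper here. Your inclusion--exclusion for $\Pc(t;u)$ also matches the paper's one-line justification.
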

\noindent We have learnt from~\cite{guttmann} that Andrew Rechnitzer has
independently obtained the functional equation satisfied by
$\Tc(u,v,w)$.
\begin{proof}
  Again, we partition the set of  prudent walk ending on the top 
  of their  box into 4 subsets, depending on the
  existence and   direction of the last inflating step.
Note that the LIS cannot be a South step.
  \begin{itemize}
    \item[1.] There is no inflating step: the walk is empty,
    and contributes 1 to the \gf.
\item[2.] The LIS goes East. This case is analogous to Case (2) of
  2-sided walks. Using the obvious symmetry between
    prudent walks ending on the top and on the right edge of their
     box, we find that the \gf\ for this class is
$$
tv \sum_{i,j,h} \Tc_{i,j,h} t^j w^{i+j} v^h =tv\Tc(w,tw,v).
$$
\item[3.]  Symmetrically, the \gf\ of prudent walks in which the LIS
  goes West is 
$$ tu \Tc(w,tw,u).$$
\item[4.]  Finally, the \gf\ of prudent walks in which the LIS goes
  North is analogous to Case (4) of 3-sided walks ending on the top, with \gf: 
  \begin{multline*}
\frac{wt}{u-tv} \left( u\Tc(u,v,w) -tv \Tc(tv,v,w)\right)
+\frac{wt}{v-tu} \left( v\Tc(u,v,w) -tu \Tc(u,tu,w)\right)
-tw \Tc(u,v,w).
  \end{multline*}
  \end{itemize}
Adding the 4 terms provides the functional equation for $\Tc(u,v,w)$,
given the obvious symmetry $\Tc(u,v,w)=\Tc(v,u,w)$.
The expression for $\Pc(t;u)$   again relies on an inclusion-exclusion
argument, based on the enumeration of walks ending on a prescribed set
of edges of their box.
\end{proof}

\subsection{Triangular prudent walks}

\begin{Lemma}\label{lem:triangle}
The \gf\ $\R(t;u,v)\equiv \R(u,v)$ of triangular prudent walks ending
on the right edge of their  box  satisfies
\beq\label{R-rec}
\left(1-\frac{tuv(1-t^2)(u+v)}{(u-tv)(v-tu)}\right) \R(u,v)
=1+tu(1+t)\frac{v-2tu}{v-tu} \R(u,tu)
+ tv(1+t)\frac{u-2tv}{u-tv} \R(tv,v).
\eeq
The \gf\ of triangular prudent walks, counted by the length and the
size of the  box, is 
\beq\label{Ptr-R}
\Ptr(t;u)= 1+ 3\R(t;u,u) -3\R(t;u,0).
\eeq
\end{Lemma}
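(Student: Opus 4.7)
The strategy is to imitate the proofs of Lemmas~\ref{lem:2s}, \ref{lem:3sided}, and~\ref{lem:4-sided}: partition the set of walks being counted according to the position and direction of the last inflating step (LIS), write down a generating function for each part, and add everything up.

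First the geometric setup. The triangular lattice has six step directions, and one checks that for each of the three edges of a north-pointing triangular box, exactly two of the six step directions have a strictly positive component along the outward normal to that edge. These are the step directions that can inflate the given edge, and the fact that there are two of them (rather than one, as in the square lattice) is what gives rise to the $(1+t)$ factors on the right-hand side of~\eqref{R-rec}.

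I would then partition prudent triangular walks ending on the right edge of their box into three cases. The first is that there is no LIS at all: before the first step of the walk the box is degenerate, so the walk inflates it immediately upon moving, and this case consists only of the empty walk, contributing the constant $1$. The second case is that the LIS inflates the right edge itself: the part of the walk up to and including the LIS ended on the (old) right edge, and is therefore counted by $\R$ with a specialised second argument recording the coupling between the two corner-distances along that edge; after the LIS, the walk makes a bounded prudent sequence of steps along the new right edge, summed as a geometric series. The third case is symmetric: the LIS inflates the bottom edge, and by the $120^\circ$ rotational symmetry of the triangular lattice the resulting contribution is obtained from the second case by swapping the roles of $u$ and $v$. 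The LIS cannot inflate the left edge: for the same reason as in the third-edge analysis of Lemma~\ref{lem:3sided}, once the left edge has been pushed outward, a prudent walk cannot reach the right edge without triggering a further inflating step.

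The main obstacle is tracking exactly how the two catalytic variables $u$ and $v$ transform when the box is inflated; this is what produces the specialised substitutions $\R(u,tu)$ and $\R(tv,v)$. Summing the two possible directions of the post-LIS drift then produces the kernel $(1-t^2)(u+v)$ together with the denominator $(u-tv)(v-tu)$, and subtracting the overcounted case in which the post-LIS drift is empty yields the correction numerators $(v-2tu)$ and $(u-2tv)$, in direct analogy with the corrective terms in Lemmas~\ref{lem:3sided} and~\ref{lem:4-sided}. Finally, the identity~\eqref{Ptr-R} follows by three-set inclusion--exclusion applied to the events ``the walk ends on the bottom/right/left edge of its box'': by rotational symmetry the generating function for walks ending on any one prescribed edge is $\R(u,u)$ (the variable $u$ now tracking the side length of the box, because the two corner-distances along an edge sum to that side length), the three pairwise intersections (walks ending at a prescribed vertex of the box) are each equal to $\R(u,0)$, and the triple intersection consists of the empty walk alone, contributing $1$.
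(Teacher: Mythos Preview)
Your case decomposition contains a genuine error. You claim that the LIS cannot inflate the left edge, by analogy with the square-lattice 3-sided case. But the analogy fails: in a north-pointing triangle the left edge and the right edge \emph{meet} at the top vertex $N$, whereas in a rectangle the left and right edges are parallel. Concretely, the two-step walk ``NE, NW'' ends at the top vertex of its box --- hence on the right edge --- and its last inflating step is NW, which inflated the left edge. More generally, if the walk sits on the old left edge at distance $i$ from $N$ and takes a NW step, it lands on the new left edge at distance $i$ from the new top vertex $N'$; it can then take $i$ non-inflating NE steps along the new left edge to reach $N'$, which lies on the right edge. (These NE steps are prudent because the entire new left edge is disjoint from the old box.) Once at $N'$, a SE step along the right edge would point towards the old right edge, which always carries at least one visited vertex --- so the walk is forced to stop at $N'$. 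This is exactly what produces the term $tu\,\R(tu,u)$ in the paper's case 2, and the extra $t$ in case 3 (LIS $=$ W) comes from needing one more NE step.

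Your second error compounds the first: you assert that ``LIS inflates the bottom edge'' is the $u\leftrightarrow v$ mirror of ``LIS inflates the right edge''. But the $u\leftrightarrow v$ symmetry is the reflection swapping the N and SE corners of the right edge, which exchanges the \emph{left} edge with the \emph{bottom} edge, not the right with the bottom. The correct picture is therefore: the two cases ``LIS inflates left'' and ``LIS inflates bottom'' are the simple, mutually symmetric ones (each contributing a term $t\,\cdot(1+t)\,\R(\cdot,\cdot)$ after the forced walk to the nearest corner of the right edge), while ``LIS inflates right'' is the single complex case that produces the kernel together with the corrective numerators $(v-2tu)$ and $(u-2tv)$. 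Your inclusion--exclusion argument for~\eqref{Ptr-R} is fine.
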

\begin{proof}
  We partition the set of prudent walks ending on the right
  edge of their    box into 7 subsets, depending on the
  existence and direction of  the last inflating step.
  \begin{itemize}
     \item[1.] There is no inflating step: the walk is empty,
    and contributes 1 to the \gf.
\item[2.] The LIS is a NW step. This implies that the
    endpoint of the walk was on the left edge of the  box
    before that step.  Thanks to the obvious symmetry between walks
    ending on the left edge and right edge of their  box,
    we obtain the \gf\ of walks of this type as
$$
tu\sum_{i,j} \R_{i,j} t^i u^{i+j}= tu\R(tu,u).
$$

 \item[3.] The LIS is a W step. Again, the
    endpoint of the walk was on the left edge of the  box
    before that step.  The \gf\ of walks of this type is
$$
tu\sum_{i,j} \R_{i,j} t^{i+1} u^{i+j}= t^2u\R(tu,u).
$$ 
  \end{itemize}
The cases where the LIS goes SE or SW are very
    similar to the two previous cases. The
    endpoint of the walk was on the bottom edge of the  box
    before the last inflating step.    
 \begin{itemize}
\item[4.]  The \gf\ of walks such that the LIS is a SE step is
$$
tv\sum_{i,j} \R_{i,j} t^j v^{i+j}= tv\R(v,tv).
$$
 \item[5.]  The \gf\ of walks such that the LIS is a SW step is
$$
tv\sum_{i,j} \R_{i,j} t^{j+1} v^{i+j}= t^2v\R(v,tv).
$$ 
 \end{itemize}
We are left with the two richer cases where the LIS goes East or
North-East.   The endpoint of the walk was already on the right
edge of the  box  before the last inflating step, and the LIS is
followed by a bounded sequence of SE or NW steps.
 \begin{itemize}
\item[6.] The \gf\ of walks such that the LIS is an E step  is
  \begin{multline*}
 tv \sum_{i,j} \R_{i,j}
\left( 
\sum_{k=0}^i t^k u^{i-k} v^{j+k} 
+\sum_{k=1}^{j+1} t^k u^{i+k} v^{j-k}\right)
\\= 
\frac{tv}{u-tv} \left( u\R(u,v)-tv\R(tv,v)\right)
+ \frac{t^2u}{v-tu} \left( v\R(u,v)-tu\R(u,tu)\right).
     \end{multline*}
 \item[7.] The \gf\ of walks such that the LIS goes NE  is
  \begin{multline*}
tu \sum_{i,j} \R_{i,j}
\left( 
\sum_{k=0}^j t^k v^{j-k} u^{i+k} 
+\sum_{k=1}^{i+1} t^k u^{i-k} v^{j+k}\right)
\\= 
\frac{tu}{v-tu} \left( v\R(u,v)-tu\R(u,tu)\right)
+ \frac{t^2v}{u-tv} \left( u\R(u,v)-tv\R(tv,v)\right).
     \end{multline*}
 \end{itemize}
We add the  7 terms above and  note that $\R(u,v)=\R(v,u)$ to obtain
the functional equation for $\R(u,v)$.
The expression of $\Ptr(t;u)$  again relies on an inclusion-exclusion
argument.
\end{proof}

\section{Enumeration and asymptotic properties of 2-sided prudent  walks}
\label{sec:2-sided}
In this section, we recall how the \emm kernel method, works on linear
equations with one catalytic
variable~\cite{bousquet-petkovsek-1,hexacephale,prodinger}, using the
example of 2-sided walks. We first recover Duchi's algebraic \gf, and
then refine our enumeration to keep track of other statistics like the
end-to-end distance of the walk. This section is a sort of warm-up
before solving the more difficult equations of Sections~\ref{sec:3-sided}
and~\ref{sec:solution-triangle}. 
\begin{Proposition}
\label{prop-sol-2sided}
The \gf\ $\Pd(t;u)$ of $2$-sided walks, counted by their length and by
the distance between the endpoint and the NE corner of the box, is
$$
\Pd(t;u)=
\frac{2(1-t^2)(1-t)U}{(1-uU)(1-tU)(2t-U)}-1
$$
where 
$$
U\equiv U(t)= \frac {1-t+t^2+t^3 - \sqrt{(1-t^4)(1-2t-t^2)}}{2t}.
$$
In particular, the length \gf\ is
$$
\Pd(t;1)= \frac 1{1-2t-2t^2+2t^3} \left(
1+t-t^3 + t(1-t) \sqrt{\frac{1-t^4}{1-2t-t^2}}
\right).
$$
\end{Proposition}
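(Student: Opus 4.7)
The approach is the standard kernel method for linear equations with one catalytic variable. The plan is to first clear the denominators $(1-tu)(u-t)$ in the functional equation of Lemma~\ref{lem:2s} to arrive at
\[
K(u)\,\Td(u) \;=\; (u-t) + t(1-tu)(u-2t)\,\Td(t),
\]
where the \emph{kernel} $K(u) := (1-tu)(u-t) - tu(1-t^2) = -tu^2 + (1-t+t^2+t^3)u - t$ is a quadratic polynomial in $u$. Its two roots are reciprocal: one is a formal power series $U = t + O(t^2)$ vanishing at $t=0$, the other is $1/U$. Solving the quadratic and computing the discriminant $(1-t+t^2+t^3)^2 - 4t^2 = (1-t^4)(1-2t-t^2)$ gives the stated closed-form expression for $U$, and furnishes the factorization $K(u) = -t(u-U)(u-1/U)$.

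Next, the kernel is cancelled: substituting $u=U$ makes the left-hand side vanish, yielding
\[
\Td(t) \;=\; \frac{U-t}{t(1-tU)(2t-U)}.
\]
Substituting this back into the equation and isolating $\Td(u)$, the numerator acquires a factor $(u-U)$ by construction. After this division one obtains $\Td(u)$ as a rational function of $t$, $u$ and $U$. Setting $u = 0$ in the original equation also gives the quick relation $\Td(0) = 1 + 2t\,\Td(t)$, so one can assemble $\Pd(t;u) = 2\Td(u) - \Td(0)$ from Lemma~\ref{lem:2s}.

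The remaining work is algebraic simplification of $\Pd(t;u)$ using the minimal polynomial $tU^2 - (1-t+t^2+t^3)U + t = 0$ of $U$. The key reduction is the identity $(1-t)(1-t^2) + 2t^2 = 1-t+t^2+t^3$, which is precisely what allows the somewhat opaque numerator to collapse to the compact form $2(1-t^2)(1-t)U$ after subtracting~$\Td(0)$. One verifies the claimed formula by clearing denominators and checking that the resulting polynomial identity in $U$ reduces modulo the kernel relation.

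For the length \gf, I would set $u=1$ and rationalise: multiplying numerator and denominator by the conjugate factor $(1-U^{-1})(1-t/U)(2t - 1/U)$ (or, equivalently, using the symmetric functions $U+1/U = (1-t+t^2+t^3)/t$ and $U \cdot 1/U = 1$) kills the $U$-dependence in the denominator, producing a polynomial in $t$. The expected denominator $1-2t-2t^2+2t^3$ should emerge directly from $(1-U)(1-tU)(2t-U)(1-1/U)(1-t/U)(2t-1/U)$, and the square root $\sqrt{(1-t^4)/(1-2t-t^2)}$ arises from the surviving factor $U - 1/U$. The main (and essentially only) obstacle is bookkeeping in these final rationalisation steps — making sure signs, the factors $(1\pm t)$, and the denominator all fall out in exactly the stated arrangement — but no conceptual difficulty remains once the kernel has been cancelled.
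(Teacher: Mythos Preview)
Your proposal is correct and follows exactly the same route as the paper: clear denominators in the equation of Lemma~\ref{lem:2s}, identify the kernel $K(u)=(1-tu)(u-t)-tu(1-t^2)$, substitute the power-series root $U$ to extract $\Td(t)=\dfrac{U-t}{t(1-tU)(2t-U)}$, then back-substitute and use $\Pd(t;u)=2\Td(u)-\Td(0)$. The paper's proof simply omits the algebraic bookkeeping you spell out (the factorisation $K(u)=-t(u-U)(u-1/U)$, the shortcut $\Td(0)=1+2t\Td(t)$, and the rationalisation at $u=1$), stating only that ``\eqref{Td-eq} gives $\Td(u)$, and the second equation of Lemma~\ref{lem:2s} provides $\Pd(t;u)$''.
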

\begin{proof}
We start from the functional equation of Lemma~\ref{lem:2s}, written
as
\beq\label{Td-eq}
\left((1-tu)(u-t)-tu(1-t^2)\right)\Td(u)=u-t
+ t (1-tu)(u-2t) \Td(t).
\eeq
The  series $U\equiv U(t)$ given in the proposition is the only power series in
$t$ that cancels the 
\emm kernel, of this equation, that is, the polynomial
$\left((1-tu)(u-t)-tu(1-t^2)\right)$. The series $\Td(U)\equiv\Td(t;U)$ is
well-defined. Replacing $u$ by $U$ in the equation cancels the
left-hand side, and hence the right-hand side, giving
$$
\Td(t)=\frac{U-t}{t(1-tU)(2t-U)}.
$$
Then~\eqref{Td-eq} gives $\Td(u)$, and the second equation of
Lemma~\ref{lem:2s} provides $\Pd(t;u)$.
\end{proof}

\begin{figure}[pbth]
\includegraphics[height=3cm,width=3cm]{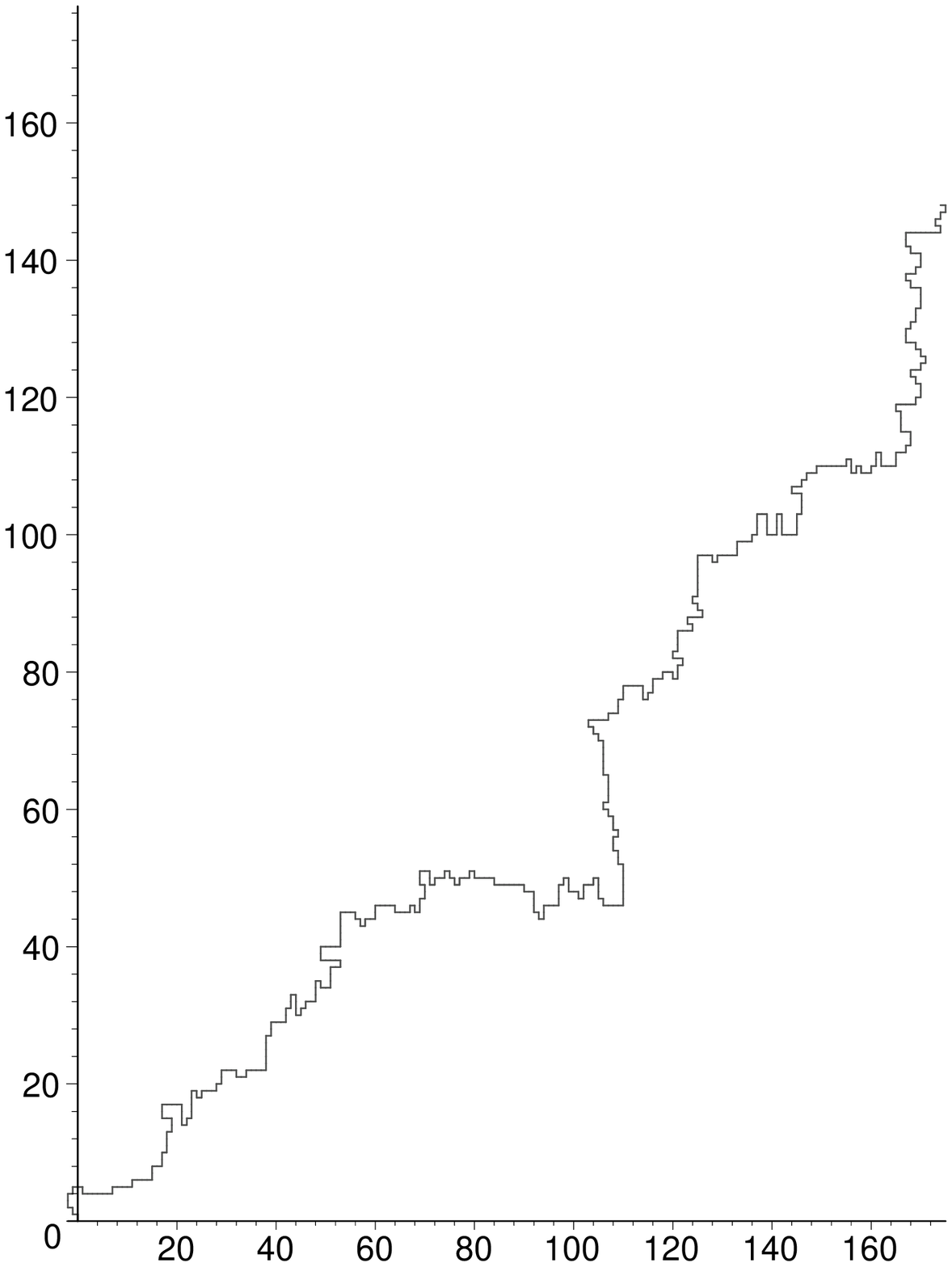}
\hskip 10mm
\includegraphics[height=3cm,width=3cm]{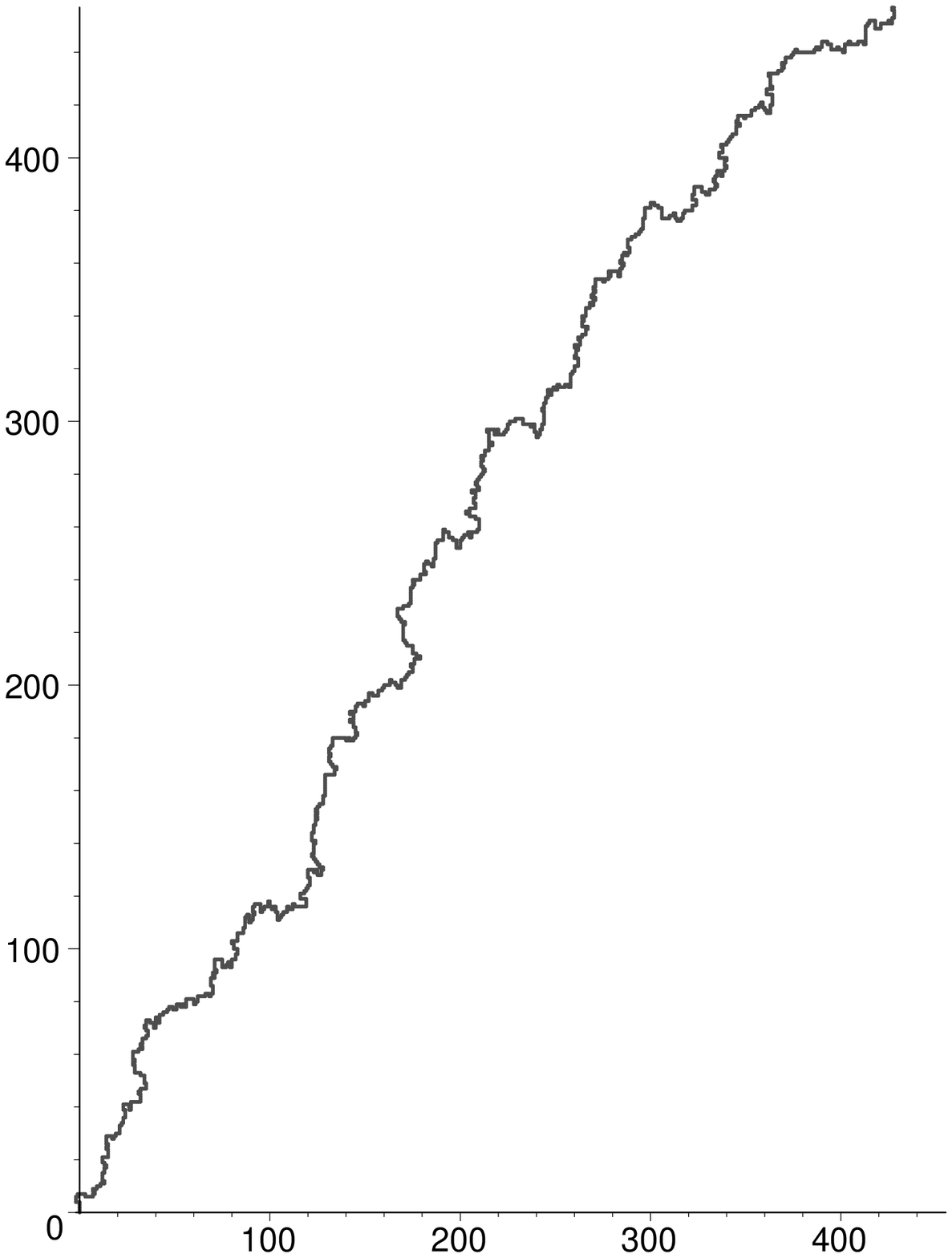}
\hskip 10mm
\includegraphics[height=3cm,width=3cm]{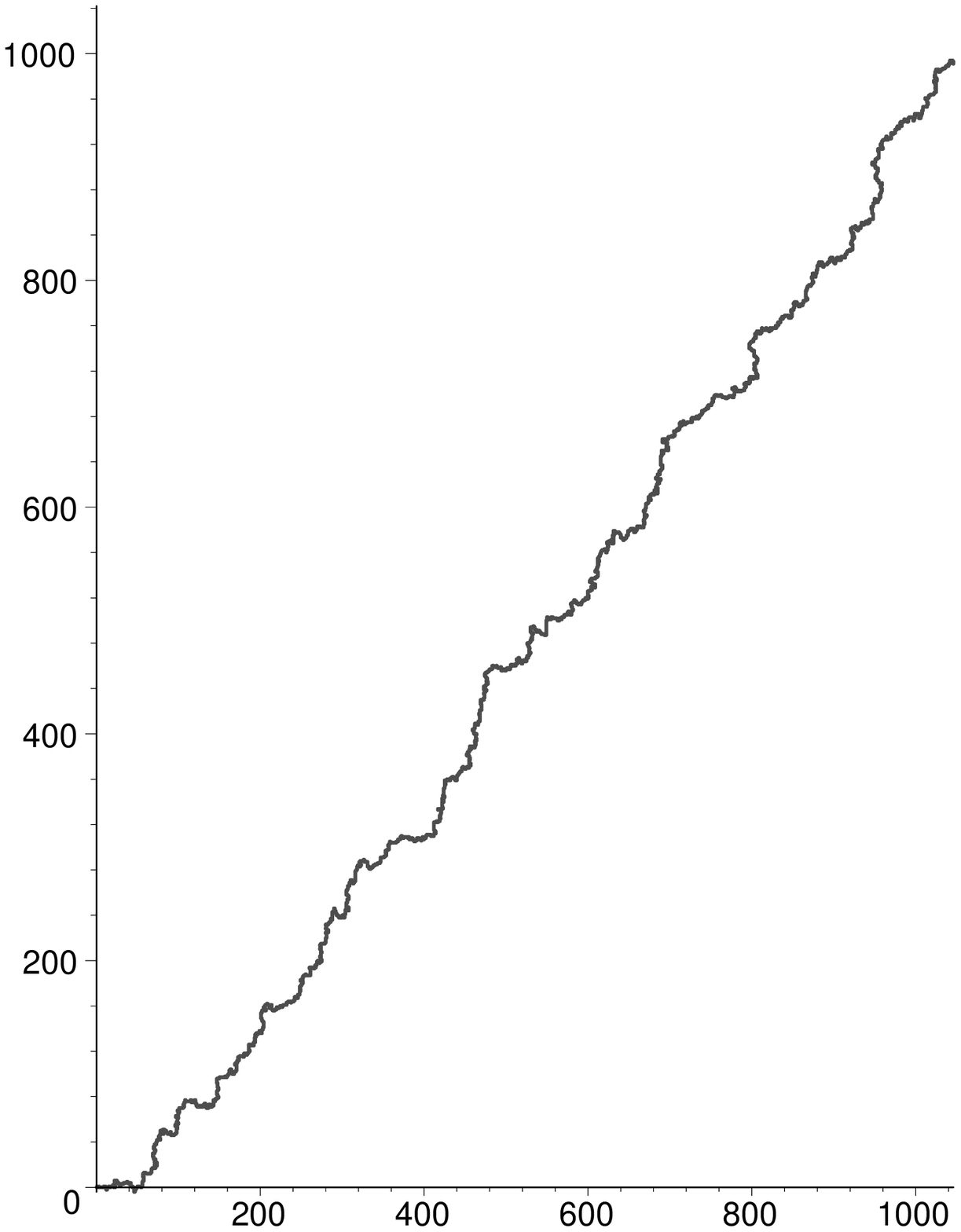}
%
\caption{Random 2-sided  walks of length 500, 1354 and 3148.}
\label{fig:random-2s}
\end{figure}

\begin{Proposition}[{\bf Asymptotic properties of 2-sided walks}]
\label{prop-2-sides-asympt}
The length \gf\, of $2$-sided walks
has a unique singularity of minimal modulus,
$\rho\simeq 0.403$, which is a simple pole and satisfies $1-2\rho-2\rho^2+2\rho^3=0$. 
The number of $n$-step $2$-sided walks satisfies
$$
p_n \sim \kappa \mu^n \quad \hbox{with} \quad \mu=\frac 1\rho\simeq 2.48
\quad \hbox{and} \quad \kappa=
{\frac {\rho \left( 3\rho -1 \right) }{ \left( 3\rho+1 \right) 
 \left( 5\rho-2 \right) }}\simeq 2.51 .
$$

 The  distance between the endpoint and the NE corner of the box in a
 random $n$-step $2$-sided walk follows asymptotically a geometric law
 of parameter $2\rho$. In particular, the average value of this
 distance is asymptotically constant, equal to 
$
\frac{2\rho}{1-2\rho}
\simeq 4.15.$

Let $X_n$ (resp. $Y_n$) denote the abscissa (resp. ordinate)
of the endpoint of a random $n$-step $2$-sided walk. Then the mean and
variance of $X_n+Y_n$ satisfy
$$
\E(X_n+Y_n) \sim  \frak m  \, n,  \quad  \quad
\Var(X_n+Y_n) \sim  \frak s ^2 \, n ,  
$$
where 
$$
 \frak m = {\frac {\rho+1}{3\,\rho+1}}\simeq 0.63
\quad \hbox{and} \quad
\frak s ^2= {\frac {4 \left( \rho+1 \right) ^{2}\rho}{ \left( 3\,\rho+1
 \right) ^{3} \left(1- \rho \right) }}\simeq 0.49,
$$
 and the variable
$
\frac{X_n+Y_n-  \frak m \, n}{\frak s \sqrt n}
$
converges in law to  a standard normal distribution.

Finally,
$$
\E(X_n-Y_n) =0,  \quad  \quad
\Var(X_n-Y_n) \sim  \frak s ^2 \, n ,  
$$
where 
$$
\frak s ^2= 
{\frac {\rho \left( {\rho}^{2}-2 \right)  \left( 1+\rho \right) }{
 \left( {\rho}^{2}+\rho-1 \right)  \left( 3\,\rho-1 \right)  \left( 1+
3\,\rho \right) }}
\simeq 5.17,
$$
and the variable
$
\frac{X_n-Y_n}{\frak s \sqrt n}
$
converges in law to a standard normal distribution.

These asymptotic properties are in good agreement with the random $2$-sided
walks of Fig.~\rm\ref{fig:random-2s}.
\end{Proposition}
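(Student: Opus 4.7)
The plan is to apply singularity analysis to the closed form for $\Pd(t;u)$ given in Proposition~\ref{prop-sol-2sided}, supplemented by Hwang's quasi-powers theorem for the central limit assertions. First, for the length generating function $\Pd(t;1)$: the algebraic function $U(t)$ has branch points at the zeros of $(1-t^4)(1-2t-t^2)$, whose smallest modulus is $\sqrt{2}-1\simeq 0.414$. The only other candidate singularities come from the factor $2t-U$ in the denominator; substituting $U=2t$ into the kernel equation $(1-tU)(U-t)-tU(1-t^2)=0$ reduces it to $1-2t-2t^2+2t^3=0$, whose smallest positive root is $\rho\simeq 0.403$, strictly less than $\sqrt{2}-1$. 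Hence $\rho$ is the unique dominant singularity of $\Pd(t;1)$; implicit differentiation of the kernel equation gives $U'(\rho)\neq 2$, so the pole is simple, and the transfer theorem yields $p_n\sim\kappa\rho^{-n}$ with $\kappa$ computed as the residue at $\rho$.

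For the distribution of the distance $D_n$ to the NE corner, I inspect the bivariate form of $\Pd(t;u)$: at $t=\rho$ only $(2t-U)$ vanishes, while $U(\rho)=2\rho$, so the remaining $u$-dependent factor $1/(1-uU)$ evaluates to $1/(1-2\rho u)$. Consequently $[t^n]\Pd(t;u)\sim K\rho^{-n}/(1-2\rho u)$ with $K$ independent of $u$, valid for $|2\rho u|<1$; dividing by $[t^n]\Pd(t;1)$ yields the limit probability generating function $(1-2\rho)/(1-2\rho u)$, which is that of a geometric law of parameter $2\rho$ on $\{0,1,2,\ldots\}$, with mean $2\rho/(1-2\rho)$.

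For $X_n+Y_n$ and $X_n-Y_n$ the plan is to refine Lemma~\ref{lem:2s} by weighting each step by $x^c$, where $c$ is the step's contribution to the statistic at hand. Since a $2$-sided walk uses only N, E, and W steps, this amounts to substituting $t\mapsto tx$ on steps with $c=+1$ and $t\mapsto t/x$ on steps with $c=-1$ throughout the construction. The resulting equation has the same shape as~\eqref{Td-eq} with an $x$-perturbed kernel; the kernel method again produces a closed form whose dominant singularity $\rho(x)$ is the smallest positive root of the polynomial obtained by substituting $U=2tx$ into the perturbed kernel (for $X+Y$; the $X-Y$ case is analogous). By the implicit function theorem, $\rho(x)$ is analytic at $x=1$ with $\rho(1)=\rho$, and Hwang's quasi-powers theorem then yields a Gaussian limit with
$$
\mathfrak{m}=-\frac{\rho'(1)}{\rho(1)},\qquad \mathfrak{s}^2=\mathfrak{m}+\mathfrak{m}^2-\frac{\rho''(1)}{\rho(1)}.
$$
The stated values of $\mathfrak{m}$ and $\mathfrak{s}^2$ follow by successive implicit differentiation of the defining polynomial at $x=1$. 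For $X_n-Y_n$, the involution $x\leftrightarrow 1/x$ corresponds to the reflection exchanging N and E (and hence the top and right edges of the box), which leaves the class invariant and forces $\mathfrak{m}=0$; the variance is obtained in the same way from $\rho''(1)$. The main obstacle, though essentially technical, is to verify the uniform analytic regularity of the perturbed pole: one must check that for $x$ in a small complex neighbourhood of~$1$ the root $\rho(x)$ remains a simple pole and stays strictly inside the disc delimited by the branch points of the perturbed $U(t,x)$, so that the quasi-powers theorem legitimately applies; the remaining manipulations on the polynomial satisfied by $\rho(x)$ are routine.
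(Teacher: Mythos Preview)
Your plan for the first three parts---the pole at $\rho$, the geometric limit law for the distance to the NE corner, and the Gaussian limit for $X_n+Y_n$---matches the paper's proof exactly: refine the functional equation with an extra variable $z$, solve by the kernel method, and apply the meromorphic schema (equivalently, quasi-powers). Note however that your assertion ``a $2$-sided walk uses only N, E, and W steps'' is false: a walk such as $E,S$ is $2$-sided, ending on the right edge of its box. What is true is that the diagonal reflection $N\leftrightarrow E$, $W\leftrightarrow S$ preserves both the class and the statistic $X+Y$, so the refined equation for $\Td(t,z;u)$ keeps the same shape as~\eqref{Td-eq}; the paper obtains $1-2z\rho_z-2\rho_z^2+2z\rho_z^3=0$ for the perturbed pole, in agreement with your recipe $U=2tz$.

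The real gap is in the $X_n-Y_n$ case. Here the diagonal reflection negates $X-Y$, so when you track this statistic the series counting walks ending on the right edge of the box is $\Td(t,1/z;u)$, not $\Td(t,z;u)$. The refined functional equation therefore does \emph{not} have the same shape as~\eqref{Td-eq}: it couples $\Td(z;u)$ with $\Td(\bar z;t\bar z)$ (where $\bar z=1/z$). The paper resolves this by applying the kernel method once, then replacing $z$ by $1/z$ to obtain a second linear relation, and solving the resulting $2\times 2$ system for $\Td(z;tz)$ and $\Td(\bar z;t\bar z)$. The upshot is that $\Pd(t,z;1)$ is algebraic of degree~$4$ rather than~$2$, and the perturbed pole is governed not by a cubic but by a polynomial of degree~$8$ in $z$ (degree~$4$ in $z^2$). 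Your plan to read off $\rho(x)$ from a single perturbed kernel does not go through here; you need the coupled system. Once that is done, the meromorphic schema applies as you indicate, and the symmetry $z\leftrightarrow 1/z$ does give $\mathfrak m=0$.
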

\begin{proof}
We start from the expression of $\Pd(t;1)$ given in
Proposition~\ref{prop-sol-2sided}. The singularities of $\Pd(t;1)$ are
found among the roots of the denominator 
$(1-2t-2t^2+2t^3)$ and those of the discriminant $(1-t^4)(1-2t-t^2)$. 
It is not hard to see that the
smallest one (in modulus) is $\rho$, corresponding to a simple pole of
the series. In the expression of $P(t;u)$ in terms of $U$ given
in Proposition~\ref{prop-sol-2sided}, this pole is found when $U=2t$.
This implies $\sqrt{(1-\rho^4)(1-2\rho-\rho^2)}=(1-4\rho^2)/2$.
The asymptotic behaviour  $p_n \sim \kappa \mu^n$ easily follows, with
$\mu=\rho^{-1}$.

\smallskip
To study the distance to the NE corner, consider
$\Pd(t;u)$ (given in Proposition~\ref{prop-sol-2sided}). For $u$ in a
neighborhood of $1$, $\Pd(t;u)$ still admits $\rho$ as its unique
dominant singularity, which remains a simple pole. This gives
$$
[t^n] \Pd(t;u) \sim \frac{1-U(\rho)}{1-uU(\rho)}\,[t^n] \Pd(t;1)
,
$$
with $U(\rho)=2\rho$, and the result follows using a continuity
theorem for probability  \gfs~\cite[Thm.~IX.1]{flajolet-sedgewick}.

\smallskip
Then, we enrich our enumeration by taking into account the sum of
the coordinates of the endpoint, using a new variable $z$. The
functional equations of Lemma~\ref{lem:2s} become:
\begin{eqnarray*}
\left(1- \frac{tuz^2(1-t^2)}{(z-tu)(u-tz)}\right)\Td(t,z;u)&=&\frac z{z-tu}
+ tz\ \frac{u-2tz}{u-tz} \Td(t,z;tz),\\
\Pd(t,z;u)&=& 2\Td(t,z;u)-\Td(t,z;0).
\end{eqnarray*}
We solve them in the same way that we solved the case $z=1$, and
obtain:
$$
\Pd(t,z;u)=
\frac{2z^3(1-t^2)(1-tz)U}{(z^2-uU)(z-tU)(2tz-U)}-1
$$
with
$$
U\equiv U(t,z)= z\,
\frac{1-tz+t^2+t^3z-\sqrt{(1-t^2)(1+t-tz +t^2z)(1-t-tz-t^2z)}}
{2t}.
$$
For $z$ in a neighborhood of $1$, the radius of convergence $\rho_z$ of
$\Pd(t,z;1)$ is reached when $U(\rho_z,z)=2z\rho_z$, or
$1-2z\rho_z-2\rho_z^2+2z\rho_z^3=0$.
One easily checks that $\Pd(t,z;1)$ satisfies the \emm meromorphic
schema, of~\cite[Thm.~IX.9]{flajolet-sedgewick}, and the limit
behaviour of $X_n+Y_n$ follows.

\smallskip
 The study of the distance  between the endpoint and the first
diagonal is similar.  We first refine the enumeration by taking into account
the difference $X_n-Y_n$, using a new variable $z$. When establishing
the functional equation satisfied by $\Td(t,z;u)\equiv \Td(z;u)$, one
must note that walks ending on the right edge of the box have \gf\
$\Td(t,1/z;u)$. The equations finally read
\begin{eqnarray}
\left(1+{\frac {tu \left( 1-t^2 \right)  }{ \left( z-tu \right)  \left( u-tz \right) }}\right)
\Td(z;u)&=& \frac z{z-tu}+tz \Td(\bz; t\bz) - \frac{t^2}{u-tz}
\Td(z;tz), \label{2-sided-diff}\\
\Pd(t,z;u)&=& \Td(t,z;u)+\Td(t,\bz;u)-\Td(t,z;0)\nonumber
\end{eqnarray}
with $\bz=1/z$.
The kernel method provides a linear equation between $\Td(z;tz)$,
$\Td(\bz; t\bz)$, involving the (quadratic) series $U(z)\equiv U(t,z)$ that
cancels the kernel of~\eqref{2-sided-diff}.
 Replacing $z$ by $1/z$ gives a second linear equation, now involving  $\Td(z;tz)$,
$\Td(\bz; t\bz)$ and $U(\bz)$. Thus
both series $\Td(z;tz)$ and $\Td(\bz; t\bz)$ can be expressed in terms of
$U(z) $ and $U(\bz)$. It is  then straightforward to  obtain expressions for $\Td(t,z;u)$ and
$\Pd(t,z;u)$  in terms of
$U(z) $ and $U(\bz)$ (we  recommend using a formal algebra
system). In particular, each of these series is algebraic 
of degree 4.

 By an obvious symmetry argument, $\E(X_n-Y_n)=0$.
As most continuity theorems involve  \emm non-negative, random variables, we
consider $n+X_n-Y_n$, which is clearly non-negative, with mean
$n$. The associated 
\gf\ is $\Pd(tz,z;1)$. For $z$ in a neighborhood of $1$, the radius of
convergence $\rho_z\equiv \rho$ of this series
 is reached when 
$$
{\rho}^{7}{z}^{8}+{\rho}^{5} \left( {\rho}^{2}+2\,\rho-4 \right) {z}^{
6}-{\rho}^{3} \left( 4\,{\rho}^{2}+5\,\rho-5 \right) {z}^{4}+\rho\,
 \left( \rho-2+5\,{\rho}^{2} \right) {z}^{2}-2\,\rho+1
=0
,$$
and is again a simple pole of $\Pd(tz,z;1)$.
One easily checks that we are again in the meromorphic
schema of~\cite[Thm.~IX.9]{flajolet-sedgewick}, and the limit
behaviour of $n+X_n-Y_n$ (and consequently, of $X_n-Y_n$) follows.
\end{proof}

\section{Enumeration and asymptotic properties of 3-sided prudent
  walks}
\label{sec:3-sided}
\begin{Proposition}
\label{prop-sol-3sided}
The \gf\ of $3$-sided walks ending on the top  of their
 box  satisfies
$$
\Tt(t; u,tu) =  \sum_{k \ge 0} (-1)^k
\frac{\prod_{i=0}^{k-1}\left( \frac t{1-tq}-U(uq^{i+1})   \right)}
{\prod_{i=0}^{k}\left( \frac {tq}{q-t}-U(uq^{i})  \right)  }
  \left( 
1+ \frac{U(uq^k)-t}{t(1-tU(uq^k))} + \frac{U(uq^{k+1})-t}{t(1-tU(uq^{k+1}))}
\right)
$$
where 
$$
U(w)\equiv U(t;w)=\frac{1-tw+t^2+t^3w-\sqrt{(1-t^2)(1+t-tw+t^2w)(1-t-tw-t^2w)}}{2t}
$$
is the only power series in $t$ satisfying $(U-t)(1-tU)=twU(1-t^2)$,
and
$$q\equiv q(t)= U(t;1)=
\frac{1-t+t^2+t^3-\sqrt{(1-t^4)(1-2t-t^2)}}{2t}.
$$
The \gf \ $\Pt(t;u)$ of $3$-sided walks, counted by their length and
by the width of the  box, can be expressed rationally in terms of
$U(u)$ and $\Tt(t;u,tu)$ (see~\eqref{Ptu-expr}). When $u=1$, this gives the length \gf\ as
 $$
\Pt(t;1)=\frac 1{1-2t-t^2}\left(
2\,{t}^{2}q\,T (t;1,t) +{\frac { \left( 1+t \right)  
\left(2-t -{t}^{2}q  \right) }{1-tq}}\right)
-\frac 1 {1-t}.
$$
\end{Proposition}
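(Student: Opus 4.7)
The plan is to apply the kernel method twice: first a standard univariate application to eliminate $\Rt$ from the system of Lemma~\ref{lem:3sided}, and then an iterated bivariate kernel method to solve the resulting equation in $\Tt$. The kernel of~\eqref{eq:Rt}, viewed as a polynomial in $u$, is $(u-t)(1-tu) - tuw(1-t^2)$, whose unique formal power series root in $t$ is precisely the $U(w)$ of the proposition. Substituting $u = U(w)$ into~\eqref{eq:Rt} kills the left-hand side and yields
\begin{equation*}
\Rt(t,w) = \frac{U(w)-t}{t^2 w}\left(\frac{1}{1-tU(w)} + t\,\Tt(tw,w)\right).
\end{equation*}
Plugging this back into~\eqref{eq:Tt} (at $w=u$ and $w=v$) and using the symmetry $\Tt(u,v)=\Tt(v,u)$, we obtain a single functional equation for $\Tt(u,v)$ of the form
\begin{equation*}
K(u,v)\,\Tt(u,v) = A(u,v) + B(u,v)\,\Tt(u,tu) + B(v,u)\,\Tt(v,tv),
\end{equation*}
where $K$ is the original $\Tt$-kernel and $A,B$ are explicit rational functions of $u,v,U(u),U(v)$.

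The second step kills $K$. Viewed as a polynomial in $v$, the $\Tt$-kernel $(u-tv)(v-tu)-tuv(1-t^2)$ has two roots whose product is $u^2$ (by Vi\`ete) and whose sum is $u(1-t+t^2+t^3)/t$; a direct check using the identity $(1-t+t^2+t^3)^2-4t^2=(1-t^4)(1-2t-t^2)$ shows they are $v=uq^{\pm 1}$, where $q=U(1)$ is the 2-sided series of Proposition~\ref{prop-sol-2sided}. Substituting the formal power series root $v=uq$ cancels $K$ and, after simplification, yields a one-step recurrence
\begin{equation*}
\Tt(u,tu) = C(u) + D(u)\,\Tt(uq,tuq),
\end{equation*}
in which a short computation gives $B(u,uq) = U(u)-tq/(q-t)$ and $B(uq,u) = U(uq)-t/(1-tq)$, so that
\begin{equation*}
D(u) = -\,\frac{t/(1-tq) - U(uq)}{tq/(q-t) - U(u)},\qquad C(u) = \frac{A(u,uq)}{tq/(q-t) - U(u)}.
\end{equation*}
Since $q = t+O(t^2)$, each iteration $u\mapsto uq$ raises the $t$-valuation of the argument by one, so the remainder $\prod_{i=0}^{K-1} D(uq^i)\cdot \Tt(uq^K,tuq^K)$ has $t$-valuation tending to infinity, and we conclude
\begin{equation*}
\Tt(u,tu) = \sum_{k\ge 0} C(uq^k)\prod_{i=0}^{k-1} D(uq^i).
\end{equation*}
Using $A(uq^k,uq^{k+1}) = 1 + (U(uq^k)-t)/(t(1-tU(uq^k))) + (U(uq^{k+1})-t)/(t(1-tU(uq^{k+1})))$, this expression coincides with the formula claimed in the proposition.

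To finish, we invoke the identity $\Pt(t;u) = \Tt(t;u,u) + 2\Rt(t;1,u) - 2\Tt(t;u,0) - t/(1-t)$ of Lemma~\ref{lem:3sided}: the first step expresses $\Rt(u,w)$ rationally in terms of $\Tt(tw,w)$ and $U(w)$, and the reduced equation from step~1 expresses $\Tt(u,v)$ rationally in terms of $\Tt(u,tu)$ and $\Tt(v,tv)$; specialising $v=u$ and $v=0$ and evaluating at $u=1$ produces the quoted closed form for $\Pt(t;1)$ in terms of $\Tt(1,t)$. The main obstacle is the bookkeeping in the second step: simplifying $B(u,uq)$ and $B(uq,u)$ to the clean forms above, and carefully tracking the shift of indices in the iteration so as to recognise the exact product structure of the statement. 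The structural reason this procedure gives only an infinite-sum answer is that $u\mapsto uq$ has infinite order, in accord with the ``infinite group'' heuristic of the introduction; this same fact will eventually force the non-D-finiteness of $\Pt(t;1)$.
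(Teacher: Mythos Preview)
Your proof is correct and follows essentially the same route as the paper: kill the kernel of~\eqref{eq:Rt} at $u=U(w)$ to eliminate $\Rt$, substitute back into~\eqref{eq:Tt} to get an equation in $\Tt$ alone, then exploit the homogeneity of the $\Tt$-kernel to set $v=uq$ and iterate. The one place where the paper is more explicit is the convergence of the iteration: rather than invoking the $t$-valuation of the \emph{argument} $uq^K$, it observes directly that the numerator $\frac{t}{1-tq}-U(uq)$ of $D(u)$ is $O(t^3)$ while the denominator is $1+O(t)$, so each factor $D(uq^i)$ already has positive $t$-valuation and the product tends to zero formally.
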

\noindent Note that the series $U(t;1)$ is the algebraic series that
occurs in the solution of 2-sided walks (Proposition~\ref{prop-sol-2sided}).
\begin{proof}
In the equation~\eqref{eq:Rt} satisfied by $\Rt(u,w)$, the only
catalytic variable is $u$ (there is no occurrence of $\Rt(\cdot, w')$
with $w'\not=w$). Thus we can apply the standard kernel method:
setting $u=U(w)$ cancels the coefficient of $\Rt(u,v)$, and we are left with
\beq \label{eq:Rt-Tt}
tw\Rt(t,w)= \frac{U(w)-t}t \left( \frac 1 {1-tU(w)} +t\Tt(tw,w)\right).
\eeq
Recall that $\Tt$ is symmetric in its two (catalytic) variables. In particular,
$\Tt(tw,w)=\Tt(w,tw)$. 
The equation~\eqref{eq:Tt} satisfied by $\Tt(u,v)$ involves the series
$\Rt(t,u)$ 
and $\Rt(t,v)$. We use~\eqref{eq:Rt-Tt} to express them in terms of
$U(u)$, $U(v)$, $\Tt(u,tu)$ and $\Tt(v,tv)$, and obtain an
equation that involves only the series $\Tt$:
\begin{multline}
\left(1- \frac{tuv(1-t^2)}{(u-tv)(v-tu)}\right) \Tt(u,v)=
\label{eq:Tt-seul}\\
1+   \frac{U(u)-t}{t(1-tU(u))}
 + \frac{U(v)-t}{t(1-tU(v))}
-\left( \frac{tv}{v-tu} -U(u)\right) \Tt(u,tu)
-\left( \frac{tu}{u-tv} -U(v)\right) \Tt(v,tv).
\end{multline}
Now both $u$ and $v$ play catalytic roles. We want to cancel the
kernel of this new equation, namely the polynomial
$K(u,v)=(u-tv)(v-tu)-tuv(1-t^2)$, by an appropriate choice of
$v$. Thus $v$ will be a function of $u$ and $t$. As $K(u,v)$ is
\emm homogeneous, in $u$ and $v$,  the dependency of $v$ in $u$ is
extremely simple:  $K(u,v)$ vanishes for $v=qu$, where $q=U(t;1)$ only
depends on $t$
(of course,  $K(u,v)$ also vanishes for $v=u/q$, but this is not a
power series in $t$). 
This simplicity is crucial
to writing the solution in an explicit form. Replacing $v$ by $qu$ in~\eqref{eq:Tt-seul} gives 
$$
\Tt(u,tu)= -\frac{\frac t{1-tq}
  -U(uq)}{\frac{tq}{q-t}-U(u)}\Tt(uq,tuq)
+ \frac 1{\frac{tq}{q-t}-U(u)} \left( 
1+ \frac{U(u)-t}{t(1-tU(u))} + \frac{U(uq)-t}{t(1-tU(uq))}
\right).
$$
Observe that $tq/(q-t)=\frac{1-tq}{1-t^2}= 1+O(t)$, while $U(u)=
O(t)$. Hence $\frac{tq}{q-t}-U(u)$ is
invertible in the ring of power series in $t$ with coefficients in
$\qs[u]$. Moreover, $\frac{t}{1-tq}-U(uq)=O(t^3)$, so that we can
iterate the above equation indefinitely, replacing $u$ by $uq$, then
by $uq^2$, and so on. The net result is the
expression of $\Tt(u,tu)$ given in the proposition.

\medskip
We now seek an expression for $\Pt(t;u)$, which was given in terms of
$\Tt(u,u)$, $\Tt(u,0)$ and $\Rt(1,u)$ in  the third equation of
Lemma~\ref{lem:3sided}. 
We wish to express each of these series in terms of $U(u)$ (which is
known explicitly) and $\Tt(u,tu)$, which we have just determined.
To express $\Rt(1,u)$ (or, equivalently, $R(1,w)$),
we combine the case $u=1$ of~\eqref{eq:Rt} with~\eqref{eq:Rt-Tt}. 
For the other  two series, namely $\Tt(u,u)$ and $\Tt(u,0)$, we
specialize~\eqref{eq:Tt-seul} to $v=u$, and then to $v=0$
(using $U(0)=t$). 
Putting together the three pieces gives an expression for  $\Pt(t;u)$
in terms of $U(u)$ and $\Tt(u,tu)$, which can (for instance) be
written as follows: 
\begin{multline}
 \Pt(t;u)=
\frac 1{1-2t-t^2}\left(
2\,{t}^{2} U (u) T (u,tu)
+{\frac { \left( 1+t \right)  
\left(2-t -{t}^{2}U (u)  \right) }
{1-tU (u) }}\right)\\
-\frac{2(1-U(u))(1+t)(1-u)}{(1-2t-t^2)(1-t-tu-t^2u)}
\left({t}^{2}T (u,tu) +{\frac { \left( t+1 \right) 
 t}{1-tU (u) }}\right)
-\frac 1 {1-t}.\label{Ptu-expr}
\end{multline}
Note that the second term has a factor $(u-1)$: this makes the
specialization $u=1$ obvious, and gives the announced expression of $\Pt(t;1)$.
\end{proof}

\noindent{\bf Where is the group?} In the introduction
(Section~\ref{sec:contents}), we 
wrote that a group is associated with every linear equation with two
catalytic variables, and that the group associated with 3-sided walks
is infinite. What is this group? It is generated by two
transformations $\Phi$ and $\Psi$ that act on ordered pairs $(u,v)$
and leave $K(u,v)/u/v$ unchanged:
$$
\Phi(u,v)=\left(\frac{v^2}u,v\right),
\quad \Psi(u,v)=\left(u,\frac{u^2}v\right).
$$
It is easy to see that they generate an infinite group. Indeed, if one
repeatedly  applies 
$\Psi\circ\Phi$  to the pair $(u,v)$, one obtains all pairs $(v^{2i}/u^{2i-1},v^{2i+1}/u^{2i})$
 for $i \ge 0$.
The role of this group was first recognized in the study of Markov
chains in the quarter plane~\cite{fayolle-livre}. See also the more
recent and combinatorial
papers~\cite{mbm-petkovsek2,mbm-kreweras,mbm-mishna,mbm-xin,rechni,marni-rechni,Mishna-jcta}.
As announced in the introduction, we now proceed to show that the
series $\Pt(t;1)$ is not D-finite.

\begin{figure}[htb]
\includegraphics[height=3cm,width=3cm]{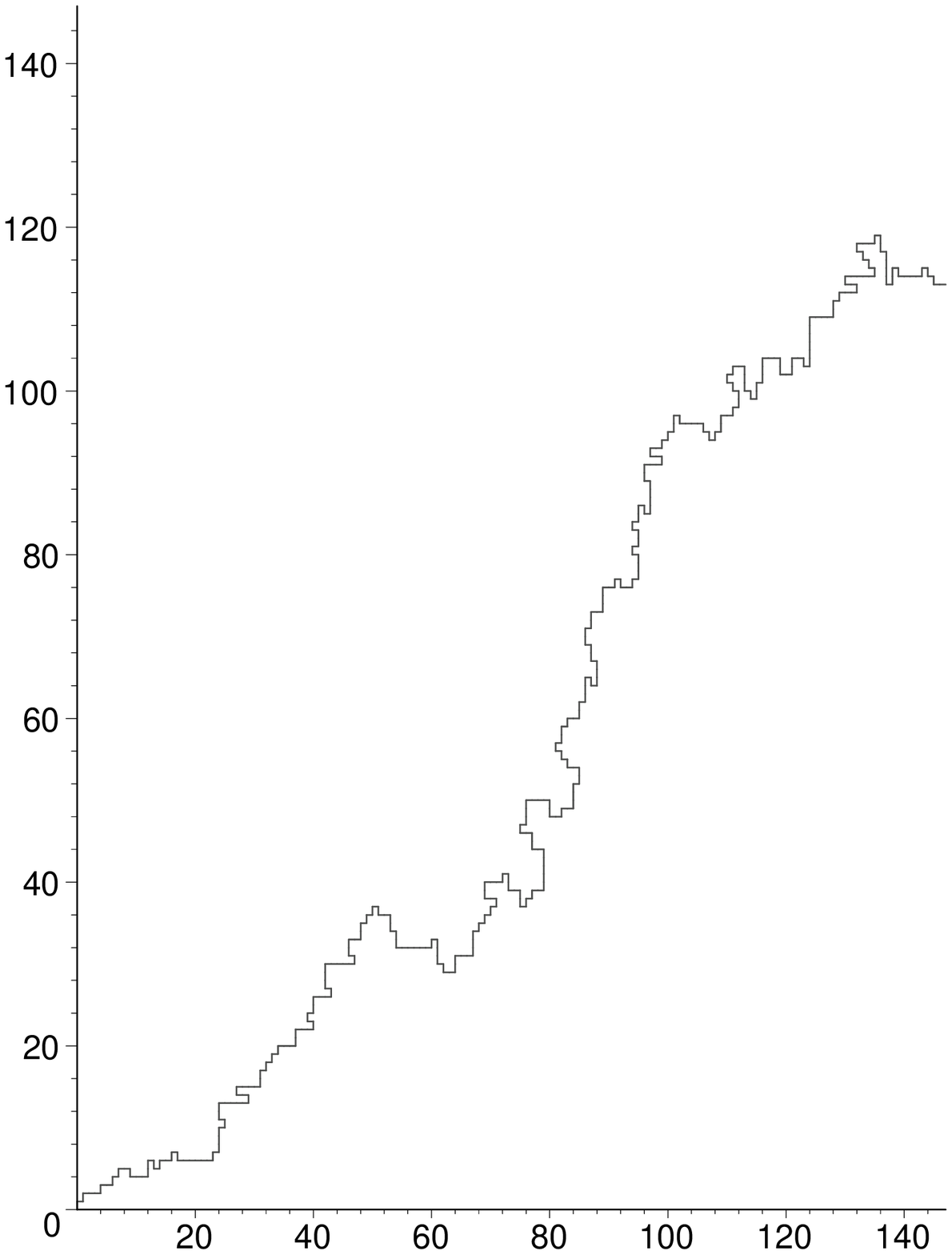}
\hskip 10mm
\includegraphics[height=3cm,width=3cm]{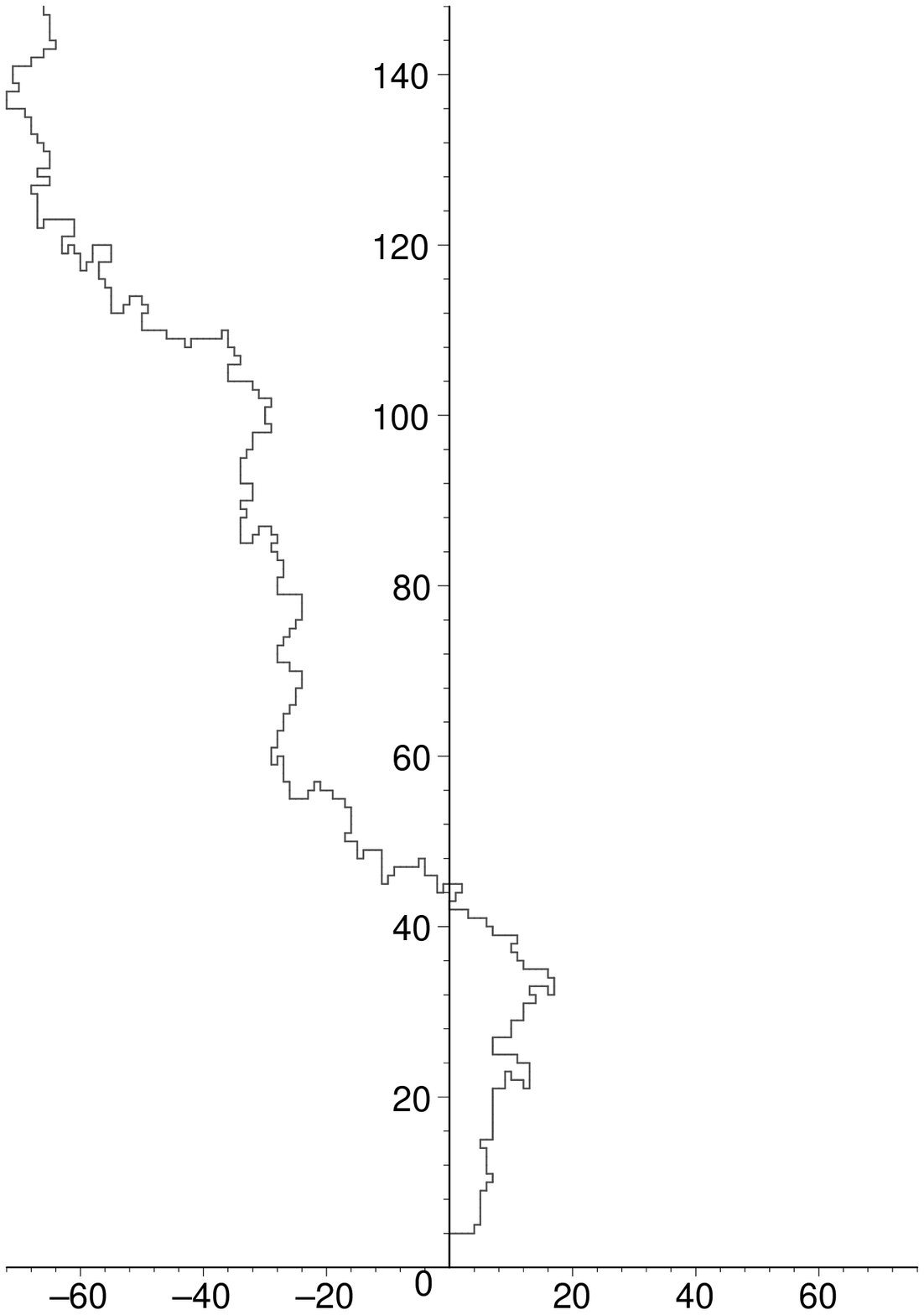}
\hskip 10mm
\includegraphics[height=3cm,width=3cm]{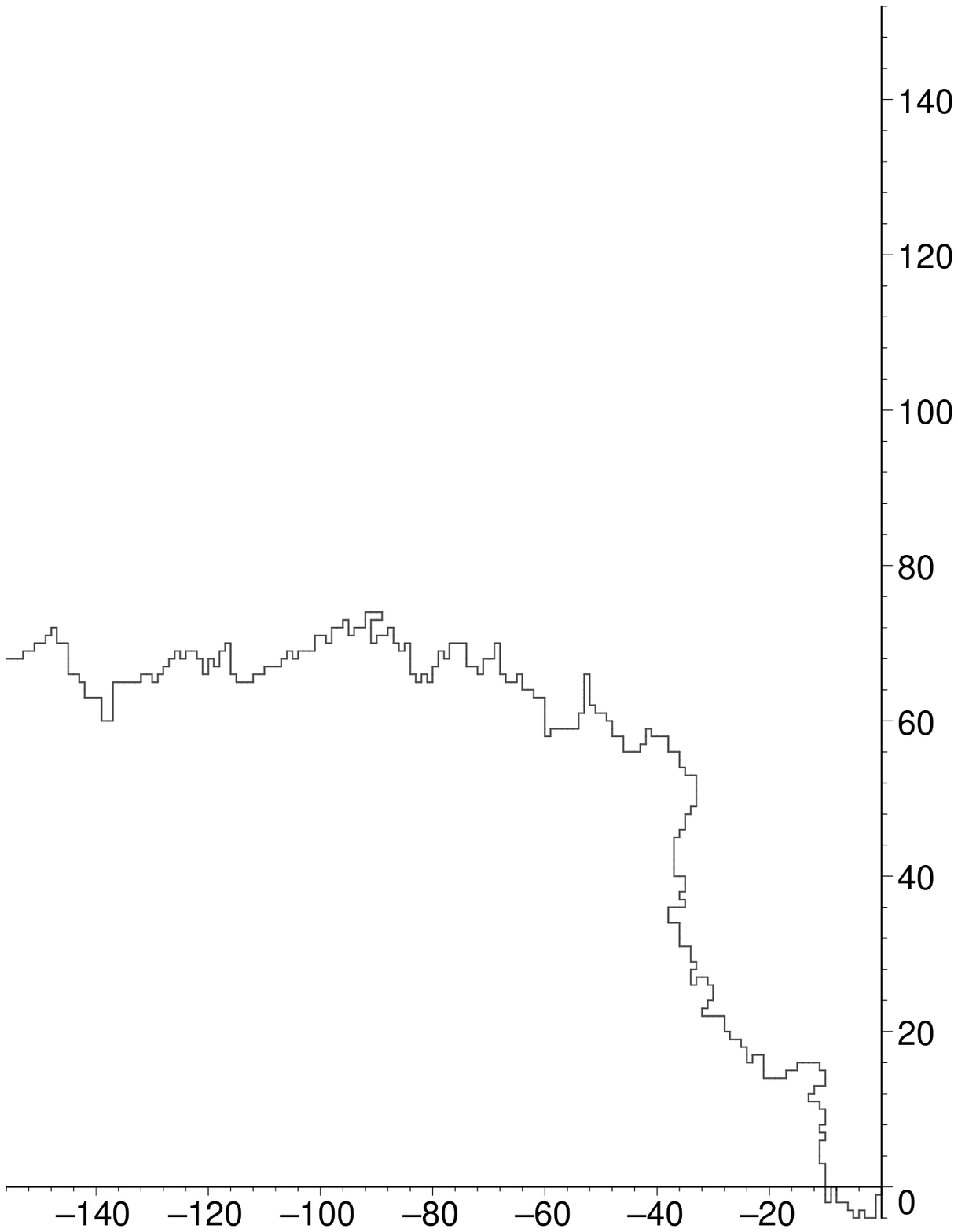} 
\caption{Random 3-sided  walks of length 400.}
\label{fig:random-3s}
\end{figure}

\begin{Proposition}[{\bf  Nature of the g.f. and asymptotic properties
    of 3-sided walks}]
\label{prop-nature-3sided}
The length \gf\ $\Pt(t;1)$ of $3$-sided walks is meromorphic in the
disk  $\D=\{t: |t|<t_c\}$, with $t_c=\sqrt 2 -1$.
 It has infinitely many poles in this disk, and thus cannot be D-finite. 
On the segment $(0, t_c)$, the poles are simple and form an increasing
sequence $(t_i)_{i\ge 0}$ that  tends to $t_c$.

The pole $t_0$ is the unique dominant
singularity of $\Pt(t;1)$ and coincides with the radius of
convergence $\rho$ of the series counting $2$-sided walks
(Proposition~\ref{prop-2-sides-asympt}). Thus  $t_0=\rho$ satisfies
$1-2\rho-2\rho^2+2\rho^3=0$, and the number of $n$-step $3$-sided walks is 
$$
p_n \sim \kappa \mu^n \quad \hbox{with} \quad \mu=\frac 1 \rho\simeq
2.48
$$
for some $\kappa >0$. 

Let $W_n$  denote the width
of the box of a random $n$-step $3$-sided walk. Then
$$
\E(W_n) \sim  \frak m  \, n,  \quad  \quad
\Var(W_n) \sim  \frak s ^2 \, n ,  
$$
where 
$$
 \frak m = \frac {1+\rho}{2(1+3\rho)}  \simeq 0.31
\quad \hbox{and} \quad
\frak s ^2= {\frac {3\rho\, \left( 1+\rho \right)  \left( 385-1148\,{\rho}^
{2}-494\,\rho \right) }{16 \left( {\rho}^{2}+\rho-1 \right)  \left( 3\,
\rho-1 \right) ^{3} \left( 1+3\,\rho \right) ^{3}}}
\simeq 1.41,
$$
 and the variable
$
\frac{W_n-  \frak m \, n}{\frak s \sqrt n}
$
converges in law to  a standard normal distribution.
\end{Proposition}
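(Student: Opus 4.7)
The plan is to exploit the explicit formula from Proposition~\ref{prop-sol-3sided}, which writes $\Pt(t;1)$ rationally in terms of the algebraic series $q(t)=U(t;1)$ and the infinite sum $\Tt(t;1,t)$, whose $k$-th term has denominator $\prod_{i=0}^{k}\bigl(\tfrac{tq}{q-t}-U(q^i)\bigr)$. Since $t_c=\sqrt 2-1$ is the smallest positive root of $1-2t-t^2$ (a factor of the discriminant of $q$), the series $q$ is analytic on $\D$ with non-negative coefficients, increasing from $q(0)=0$ to $q(t_c)=1$ on $(0,t_c)$; positivity of coefficients gives $|q(t)|\le q(|t|)<1$ on compact subsets $K\subset\D$, so $q^i\to 0$ geometrically and $U(t;q^i)\to U(t;0)=t$ uniformly on $K$. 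First, I would establish meromorphy of $\Pt(t;1)$ on $\D$ by proving uniform convergence of the infinite sum on compact subsets avoiding the poles: the ratio of consecutive terms tends, as $i\to\infty$, to $\tfrac{q(q-t)}{1-tq}$, whose modulus is strictly less than $1$ on $K$ (the inequality $|q(q-t)|<|1-tq|$ being equivalent to $|q|<1$); together with the $O(q^i)$ decay of the individual factors, this yields geometric convergence. The only singularities of $\Pt(t;1)$ in $\D$ are therefore the zeros of the denominator factors $\tfrac{tq}{q-t}-U(q^i)=0$, $i\ge 0$.

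Next I would locate the real poles on $(0,t_c)$. Setting $c(t):=\tfrac{tq}{q-t}$, the function $c$ is decreasing from $1$ to $1/\sqrt 2$, while for each $i$ the function $U(q^i)(\cdot)$ is increasing from $0$ to $1$ and is itself decreasing in $i$ (since $q<1$ on $(0,t_c)$ and $w\mapsto U(w)$ has non-negative coefficients). The intermediate value theorem gives a unique simple pole $t_i$, and monotonicity of both families gives $t_0<t_1<t_2<\cdots$. Since $U(q^i)\to t$ pointwise on $(0,t_c)$, any accumulation point $t^*<t_c$ would satisfy $t^*=c(t^*)$, hence $t^*=0$, a contradiction; so $t_i\to t_c$. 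For $i=0$, the kernel identity $(q-t)(1-tq)=tq(1-t^2)$ reduces $q=c(t)$ to $q=2t$, equivalent to $1-2t-2t^2+2t^3=0$, so $t_0=\rho$. Having infinitely many poles in $\D$, $\Pt(t;1)$ cannot be D-finite: singularities of a D-finite univariate series are confined to the zeros of the leading coefficient of its annihilating ODE.

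Finally I would derive the enumerative asymptotic and the Gaussian CLT for $W_n$. All real poles $t_i$ with $i\ge 1$ exceed $\rho$; since $p_n>0$ for every $n\ge 0$, Pringsheim's theorem and aperiodicity make $\rho$ the unique dominant singularity, giving $p_n\sim\kappa\rho^{-n}$ with $\kappa=-\operatorname{Res}_{t=\rho}\Pt(t;1)/\rho$. For the width, I would refine the equations of Lemma~\ref{lem:3sided} by marking the box-enlarging horizontal steps with a variable $z$, propagate this through Proposition~\ref{prop-sol-3sided}, and observe that the perturbed dominant pole $\rho(z)$, defined by the $z$-analogue of $q=2tz$, varies analytically in $z$ near $1$. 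The meromorphic schema of~\cite[Thm.~IX.9]{flajolet-sedgewick} then delivers the Gaussian limit, with $\frak m=-\rho'(1)/\rho(1)$ and $\frak s^2$ computed from $\rho(1)$, $\rho'(1)$, $\rho''(1)$ via implicit differentiation. The main obstacle is the uniform-convergence estimate near $|t|=t_c$, where the limiting ratio $\tfrac{q(q-t)}{1-tq}$ approaches modulus $1$: the bound $|q|<1$ degrades at the boundary, so sharpness must come from combining this control with the explicit $O(q^i)$ decay of the numerator factors $\tfrac{t}{1-tq}-U(q^{i+1})$, and from carefully ruling out cancellation between successive terms of the alternating sum.
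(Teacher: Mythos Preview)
Your overall strategy matches the paper's, but there is a genuine gap: you never verify that the zeros $t_j$ of the factors $\frac{tq}{q-t}-U(q^j)$ are actually poles of $\Pt(t;1)$. Writing $\Tt(t;1,t)=N(t)/D(t)$ with $D(t)=\prod_{i\ge 0}\bigl(1-\frac{q-t}{t^2}\tilde U(q^i)\bigr)$, you have only located the zeros of $D$; nothing in your argument rules out that $N$ vanishes there too. The paper devotes the bulk of its proof to precisely this point: at $t=t_j$ all summands $N_k$ with $k<j$ vanish, and one must show that the tail $\sum_{k\ge j}(-1)^kN_k(t_j)$ is nonzero. This is done by a careful sign analysis (all relevant factors are shown to be positive or to alternate in a controlled way, using the key inequality $U(q^{j+2})<\frac{t}{1-tq}<U(q^{j+1})$ at $t=t_j$) together with a monotonicity estimate $|N_{k}|<|N_{k-1}|$ for $k\ge j+2$. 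Your closing remark about ``ruling out cancellation between successive terms of the alternating sum'' hints at the right issue, but you frame it as a convergence difficulty near $|t|=t_c$, whereas the actual work is a pointwise nonvanishing argument at each $t_j$. Without it, neither the non-D-finiteness conclusion nor even the claim that $\rho$ is the radius of convergence is justified.

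A secondary gap is your appeal to ``Pringsheim's theorem and aperiodicity'' to make $\rho$ the unique dominant singularity. Pringsheim only puts a singularity on the positive axis; it says nothing about the absence of other poles on the circle $|t|=\rho$. The paper handles this by a direct positivity argument: for $|t|=\rho$, $t\neq\rho$, one has the strict inequalities $|U(t;q(t)^i)|<U(\rho;1)$ and $\bigl|\frac{tq}{q-t}\bigr|>U(\rho;1)$, so no denominator factor can vanish. Finally, for the width $W_n$ there is no need to introduce a new marking variable $z$: the variable $u$ in $\Pt(t;u)$ already records the width, and the perturbed dominant pole is determined by $U(u)=\frac{tq}{q-t}$ (an explicit polynomial in $t,u$), not by anything of the form $q=2tz$.
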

\begin{proof}
We start from the expression of $\Pt(t;1)$ given in
Proposition~\ref{prop-sol-3sided}. The polynomial $1-2t-t^2$ does not
vanish in $\D$ (although it vanishes at $t_c$),
 and we will show below that $q(t)$ is analytic in
$\D$, and that $1-tq(t)$ does not vanish in this domain. Hence most of
our analysis will focus on the series $\Tt(t;1,t)$. We will prove
that it is meromorphic in $\D$,
with a sequence of real positive simple poles $\rho=t_0 <t_1 <t_2<\cdots<t_c$
satisfying  
$$
\frac{t_iq(t_i)}{q(t_i)-t_i}=U(q(t_i)^i).
$$
Clearly, our first concern will be the series $U(u)$.

\medskip \noindent $\bullet$
{\bf The series $U(u)$.}
The quadratic equation that defines $U(u)$ gives
\beq\label{U-tilde}
U(u)=t+\tU(u)  \quad \hbox{with}  \quad 
\tU(u)= \frac{tuU(u)}{1-\frac{t\tU(u)}{1-t^2}},
\eeq
which shows that both $U(u)$ and $\tU(u)$ have coefficients in $\ns[u]$.
In particular, for all $t$ and $u$ such that  $U(|t|;|u|)$ converges,
$U(t;u)$ also converges and satisfies $|U(t;u)|\le U(|t|;|u|)$.

For $u=1$, we find that the radius of convergence of $U(t;1)=q(t)$ is
at $t_c$. Thus $U(t;1)$ is analytic in $\D$. Moreover, we
note that $U(t_c;1)=1$. The non-negativity of the coefficients of
$U(t;1)=q(t)$ implies that $|q(t)|<1$ for $t\in \D$. 

Consequently, for $t\in \D$ and $i\ge 0$, the series $U(t;q(t)^i)\equiv U(q^i)$ is
(absolutely) convergent, and thus analytic in $\D$. Moreover, 
$\left|tU(q^i)\right|< t_c\,U(t_c;1)=t_c<1$, so that $1-tU(q^i)$
does not vanish in $\D$.

\medskip \noindent $\bullet$
{\bf The numerator and denominator of $\Tt(t;1,t)$.}
Recall the expression of $\Tt(t;u,tu)$ given in
Proposition~\ref{prop-sol-3sided}. 
Note that $q(t)^i\rightarrow 0$ as $i\rightarrow \infty$, both as a
power series in $t$ (because $q(t)=O(t)$) and for every $t\in \D$
(because $|q(t)|<1$). In
particular, the term $tq/(q-t)-U(q^i)=t^2/(q-t)-\tU(q^i) $ converges
to $t^2/(q-t)$. 
Let 
\beq\label{D-def}
D(t)= \prod_{i\ge 0} \left(1- \frac{q-t}{t^2} \, \tU(q^i)\right).
\eeq
As $\frac{q-t}{t^2}=1+O(t)$ and $ \tU(q^i)=O(t^i)$, this is a well-defined series in
$t$.  We  write
\beq
\label{T-N-D}
\Tt(t;1,t)=\frac{N(t)}{D(t)}
\eeq
with
$$
N(t)= \sum_{k \ge 0} (-1)^k N_k(t)
$$
and 
\beq\label{Nk-def}
N_k(t)=
\left( \frac{q-t}{t^2}\right)^{k+1}
{\prod_{i=0}^{k-1}\left( \frac t{1-tq}-U(q^{i+1})   \right)}
{\prod_{i>k}\left(1-\frac{q-t}{t^2} \tU(q^{i})  \right)  }  T_k(t)
\eeq
with
$$
T_k(t)=
1+ \frac{U(q^k)-t}{t(1-tU(q^k))} + \frac{U(q^{k+1})-t}{t(1-tU(q^{k+1}))}.
$$
We will now prove that $D(t)$ and $N(t)$ are analytic in $\D$, so that
$\Tt(t;1,t)$ is meromorphic.

\medskip \noindent $\bullet$
{\bf The series $D(t)$ is analytic in $\D$.}
 As discussed above, every term of the product $D(t)$ is  analytic in
 $\D$. We still need to prove that the product converges in $\D$. 
For $t\in\D$ and  $|u|\le 1$, the equations~\eqref{U-tilde} imply that $|\tU(t;u)|\le
 |u| \tU(|t|;1)$. Consequently, 
\beq\label{ineq0}
|\tU(q^i)|\le |q(t)|^i \, \tU(|t|;1),
\eeq with $|q(t)|<1$, 
 so that the series $\sum_i |\tU(q^i)|$ is convergent
 for $t\in \D$. The same holds for the product $D(t)$.

\medskip \noindent $\bullet$
{\bf The series $N(t)$ is analytic in $\D$.} 
It follows from the properties of
$q, U$ and $\tU$  that every summand $N_k$
 (given by~\eqref{Nk-def}) is analytic in $\D$. We will prove the
convergence of the series $\sum_{k \ge 0} | N_k(t)|$ by bounding 
$N_k(t)$. Let us begin with $T_k(t)$.
First, we note that 
$$
\frac{U(t;u)-t}{t(1-tU(t;u))}= \frac{u (1-t^2) U(t;u)}{1-tU(t;u)}
$$ 
is uniformly bounded (by a constant) for $t\in \D$ and $|u|\le 1$: hence 
$T_k(t)$ is  uniformly bounded by a constant for $t\in\D$.

Let us now bound  the infinite product occurring
in $N_k(t)$. We write 
\begin{multline*}
  \left|\prod_{i>k}\left(1-\frac{q-t}{t^2} \tU(t;q^{i})  \right) \right|
\le
\exp\left(\sum_{i>k}\frac{|q-t|}{|t|^2} |\tU(t;q^{i})|   \right)\\
\le
\exp\left(\sum_{i>k}\frac{|q-t|}{|t|^2} \tU(|t|;1) |q(t)|^i   \right)
\le
\exp\left(\frac{|q-t|}{|t|^2}\frac{ \tU(|t|;1)|}{1-|q(t)|} \right)
< \infty.
\end{multline*}
The second inequality follows from~\eqref{ineq0}.

Hence it suffices to prove the convergence of
$$
\sum_k |M_k(t)| \quad \hbox{ with }\quad 
M_k(t)= \left( \frac{q-t}{t^2}\right)^{k+1}
{\prod_{i=0}^{k-1}\left( \frac t{1-tq}-U(q^{i+1})   \right)}
.
$$
Recall that $U(t;q(t)^k)\rightarrow U(t;0)=t$  as $k\rightarrow \infty$. Hence
$$
\frac{M_k(t)}{M_{k-1}(t)}
=  \frac{q-t}{t^2}\left( \frac t{1-tq}-U(q^{k})   \right)
\rightarrow   \frac{q(q-t)}{1-tq}.
$$
Due to the positivity of the coefficients of the series 
$q(t)=U(t;1)$ and $q(t)-t= \tU(t;1)$, the modulus of this ratio is
strictly bounded 
in $\D$ by the value it takes at   $t_c$, namely 1. Thus $
\left|{M_k(t)}/{M_{k-1}(t)}\right|$ converges, as $k$ grows, to a
limit that is less than 1:
the convergence of $\sum_k |M_k(t)| $ follows, and implies that $N(t)$
is analytic in $\D$. 

\medskip
Consequently,  $\Tt(t;1,t)=N(t)/D(t)$ is meromorphic in $\D$. In this
disk, all its singularities are poles, found among the zeroes of
$D(t)$.  The product form~\eqref{D-def} of
$D(t)$ leads us to study the zeroes of each  factor, or,
equivalently, the zeroes of $tq/(q-t)-U(q^i)$.

\medskip \noindent $\bullet$
{\bf Zeroes of $tq/(q-t)-U(q^i)$.} We fix $i\ge 0$ and focus on the
real interval 
$(0, t_c)$. On this interval, $U(t;q(t)^i)$ increases from $0$
to $1$.  The identity
$$
\frac{tq}{q-t}=\frac{1-tq}{1-t^2}=1- \frac{t\tU(t;1)}{1-t^2}
$$
shows that $tq/(q-t)$ decreases from 1 to $1/\sqrt 2<1$. 
Thus there exists a unique $t_i$ in $(0, t_c)$ such that
$t_iq(t_i)/(q(t_i)-t_i)= U(q(t_i)^i)$. Moreover, as $U(q^i)\ge
U(q^{i+1})$ for $t \in (0, t_c)$, one has $t_0<t_1<t_2<\cdots
<t_c$ and
\beq
\label{ineq2}
\frac{t_iq(t_i)}{q(t_i)-t_i}- U(q(t_i)^k) >0 \quad \hbox{for } k>i.
\eeq
 Each of these zeroes is simple, as the derivative of
$tq/(q-t)$ is negative. As $\Tt(t;1,t)$ is meromorphic in $\D$, its
poles are isolated, so that the increasing sequence
%
$(t_i)_i$ can only converge to $t_c$.

The equation satisfied by $\rho:=t_0$ reads $q(\rho)=2\rho$, which
yields the cubic equation of the proposition.

\medskip \noindent $\bullet$
{\bf Existence of infinitely many poles.} It remains to prove that each $t_j$, for
$j\ge 0$, is
actually a pole of $\Tt(t;1,t)=N(t)/D(t)$, that is, that $N(t_j)\not = 0$.
Recall the expression~\eqref{Nk-def} of $N_k$.  For $t=t_j$, one has
$U(q^j)=tq/(q-t)$, or, equivalently, $\tU(q^j)=t^2/(q-t)$. Hence
$$
N(t)=\sum_{k\ge j} (-1)^k N_k(t).
$$
We will show the following properties: for $t=t_j$,
\begin{itemize}
\item [(A)]
 $(-1)^jN_j(t)> 0$, and for $k>j$,  the signs of $(-1)^kN_k(t)$
alternate, starting from a positive sign:  $(-1)^{k-j-1}N_k(t) >
0$,
\item  [(B)] $|N_{j+1}(t)|> |N_{j+2}(t)| > |N_{j+3}(t)| >\cdots$
\end{itemize}
The combination of these two properties implies that $N(t_j)>0$, and
in particular that $t_j$ is indeed a pole of $\Tt(t;1,t)$.

Let us write $t=t_j$, and study the sign of $N_k(t)$. We begin with
the signs of $T_k(t)$ and $q-t$. We have already seen that  $U(u)-t=\tU(u)>0$
and $tU(u)<1$ for  $u \in (0,1)$. In particular, 
$$
T_k(t)\ge 1\quad \hbox{and }\quad  q-t>0.
$$
Let us move to the infinite product occurring in $N_k(t)$. Now for $i>k\ge j$,
$$
1- \frac{q-t}{t^2} \tU(q^i)=1-\frac{\tU(q^i)}{\tU(q^j)}>0
$$
as $q \in (0,1)$ and $\tU(u)$ is an increasing function of $u$, for
$0<u<1$.

We are left with the sign of $t/(1-tq)-U(q^{i+1})$, for $i\ge 0$. We
will prove that, still denoting $t=t_j$,
\beq\label{ineq}
U(q^{j+2})<\frac t{1-tq} <U(q^{j+1}).
\eeq
Given that the sequence $U(q ^i)$ decreases as $i$ grows, this gives
\begin{eqnarray}
\frac t{1-tq}- U(q^{i+1}) 
&<0 & \hbox{for } 0\le i \le j,\nonumber\\
& >0 & \hbox{for } i \ge j+1,\label{ineq1}
\end{eqnarray}
and concludes the proof of Property (A).

The key to prove~\eqref{ineq} is to observe that the function
$u\mapsto U(t;u)$ is increasing for $u \in (0,1)$, with an explicit
inverse, easily derived from the quadratic equation defining $U(u)$:
$$
u= \frac{(U(u)-t)(1-tU(u))}{tU(u)(1-t^2)}.
$$
Thus~\eqref{ineq} is equivalent to
$$
q^{j+2} <\frac{q (1-tq-t^2)}{(1-t^2)(1-tq)} <q^{j+1}.
$$
Recall that we write $t=t_j$. Hence $U(q^j)= tq/(q-t)$, so that
$$
q^j=\frac{(q-t-t^2q)}{q(1-t^2)(q-t)}.
$$
Hence we are left with proving that
$$
\frac{q(q-t-t^2q)}{q-t} <\frac{q (1-tq-t^2)}{1-tq}
<\frac{(q-t-t^2q)}{q-t},
$$
which is easily seen to hold: the first inequality boils down to
$q<1$, and the second one to $q-t-t^2q-t^3q>0$. But the
quadratic equation defining $q$ gives 
$$
q-t-t^2q-t^3q=tq(1+t)(1-2t)>0.
$$ 

It remains to prove Property (B), that is, for $k\ge j+2$,
$$
\frac{|N_k(t)|}{|N_{k-1}(t)|}=\frac{\left|\frac t{1-tq} -U(q^k)\right|}{\left|\frac {tq}{q-t}
    -U(q^k)\right|}\ \frac{|T_k(t)|}{|T_{k-1}(t)|} <1 .
$$
First,
note that the sequence $(U(q ^k))_k$ is decreasing. Hence the same holds for
$(T_k(t))_k$. Thus it suffices to show that for $ k\ge j+2$,
$$
{\left|\frac t{1-tq} -U(q^k)\right|}<{\left|\frac {tq}{q-t}
    -U(q^k)\right|}.
$$
Given~\eqref{ineq2} and~\eqref{ineq1},  we are left with proving
$$
\frac t{1-tq}<\frac {tq}{q-t},
$$
which boils down to $q^2<1$ and is immediate.

This concludes the proof of the properties of $\Tt(t;1,t)$ stated at
the beginning of the proof. These properties also hold for $\Pt(t;1)$,
as shown by the last equation of Proposition~\ref{prop-sol-3sided}.

\medskip \noindent $\bullet$
{\bf The asymptotic number of 3-sided walks.}
We have proved that $\Tt(t;1,t)$ and $\Pt(t;1)$ are meromorphic in
$\D$, with a
smallest real positive pole  at $t_0=\rho$. By Pringsheim's theorem,
$\rho$ is the radius of convergence of $\Pt(t;1)$.  In order to
prove that the coefficients of $\Pt(t;1)$  behave asymptotically as $\kappa \rho^{-n}$, we need to prove that there exists $\varepsilon >0$ such
that $\Pt(t;1)$ has no singularity
other than $\rho$ in the disk of radius $\rho+\varepsilon$.

As $\Pt(t;1)$ is meromorphic in $\D$, its poles are isolated. Thus we only have to prove that there is no other pole of modulus
$\rho$. 
Recall that all  poles $t$ of $\Pt(t;1)$ are roots of $tq/(q-t)=U(q^i)$
for some $i\ge 0$. Let $t$ be a complex number of modulus $\rho$, with
$t\not = \rho$. The positivity of the coefficients of $U(t;u)$ imply that
for $i\ge 0$,
$$
|U(t;q^i)|<U(\rho; q(\rho)^i)\le U(\rho; 1)
$$
while
$$
\left|\frac{tq}{q-t}\right|=\left|1- \frac{t\tU(t;1)}{1-t^2}\right|
>
1- \frac{\rho\tU(\rho;1)}{1-\rho^2}= U(\rho; 1).
$$
Hence $|U(q^i)|<|tq/(q-t)|$ for all $i$ and $t$ cannot be a pole.

\medskip \noindent $\bullet$
{\bf The width of the box.} The series $\Pt(t;u)$ that counts 3-sided
prudent walks by their length and the width of the box is given
by~\eqref{Ptu-expr}. A perturbation of the singularity analysis that
we have performed for  $\Pt(t;1)$ reveals that, for $u$ in some
neighborhood of $1$,   $\Pt(t;u)$ admits a unique dominant
singularity, which is a simple isolated pole obtained when
$U(u)=tq/(q-t)$, or
$$
t \left( t-1 \right) ^{3} \left( t+1 \right) ^{3}{u}^{2}+ \left( {t}^{
3}+2\,{t}^{2}-2\,t+1 \right)  \left( t-1 \right) ^{2} \left( t+1
 \right) ^{2}u-t \left( 1-2\,{t}^{2}-t+{t}^{4}+2\,{t}^{3} \right) 
.
$$
  We are again in the
meromorphic schema of~\cite[Thm.~IX.9]{flajolet-sedgewick}, and the limit
behaviour of $W_n$  follows.
\end{proof}

\section{Enumeration and asymptotic properties of triangular prudent  walks}
\label{sec:solution-triangle}
We now turn our attention to the triangular prudent walks of
Fig.~\ref{fig:triangle}.   Recall that the box of such a walk is a
triangle that points North.  If
this box has size $k$, we say that the walk \emm spans a box,
of size $k$. 
\begin{Proposition}\label{prop:triangle-simple}
  The number of triangular prudent walks that span a box of size $k$  is
$$
\tilde p _k=2^{k-1} (k+1)(k+2)!.
$$
More precisely, the number of triangular prudent walks that span a box
of size $k$, and end on the right edge of their box at distance $i$
from the North corner 
(and thus at distance $j=k-i$ from the South-West corner)
is
$$
\tilde r_{i,j}= \left\{
  \begin{array}{ll}
     \displaystyle \frac{2^k(k+2)!} 3 &\hbox{if } i=0 \hbox{ or } j=0,\\
 \displaystyle  \frac{2^k(k+2)!} 6 &\hbox{otherwise.}
  \end{array}\right.
$$
\end{Proposition}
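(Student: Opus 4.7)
The plan is to specialize the functional equation \eqref{R-rec} at $t=1$ and extract coefficients of the resulting bivariate series. Writing $R(u,v):=R(1;u,v)$ and $S(u):=R(u,u)$, the factor $1-t^2$ kills the left-hand coefficient of $R(u,v)$, so the equation collapses to
\[
R(u,v) \;=\; 1 \;+\; \frac{2u(v-2u)}{v-u}\, S(u) \;+\; \frac{2v(u-2v)}{u-v}\, S(v).
\]
Since triangular prudent walks inscribed in a fixed triangle are finite in number, the coefficients $R_{i,j}(t)$ are polynomials in $t$, so $R(u,v)\in\mathbb Q[[u,v]]$ is well defined and $\tilde r_{i,j}=[u^iv^j]R(u,v)$.

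An immediate specialization is $v=0$, which gives $R(u,0)=1+4u\,S(u)$ (so $\tilde r_{i,0}=4s_{i-1}$ once $S=\sum_k s_k u^k$ is known). To determine $S$, I would combine the two singular fractions over the common denominator $v-u$; the resulting numerator $N(v)=2u(v-2u)S(u)-2v(u-2v)S(v)$ vanishes at $v=u$, and a single application of L'Hôpital's rule yields $S(u)=1+N'(u)=1+2u^2 S'(u)+8u\, S(u)$, i.e.\ the first-order linear ODE
\[
(1-8u)\,S(u)\;-\;2u^2 S'(u)\;=\;1.
\]
Extracting coefficients gives $s_0=1$ and $s_k=2(k+3)\, s_{k-1}$ for $k\ge 1$, whence $s_k=\dfrac{2^k(k+3)!}{6}$. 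The formula $\tilde r_{i,0}=4s_{i-1}=\dfrac{2^i(i+2)!}{3}$ for $i\ge 1$ follows at once, and $\tilde r_{0,j}$ follows by the symmetry $R(u,v)=R(v,u)$.

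For the interior coefficients $i,j\ge 1$ I would rewrite $\dfrac{v-2u}{v-u}=1-\dfrac{u}{v-u}$ and similarly for the other fraction, which recasts the equation as
\[
R(u,v)\;=\;1\;+\;2u\,S(u)\;+\;2v\,S(v)\;+\;\frac{2u^2 S(u)-2v^2 S(v)}{u-v}.
\]
The final term is the standard symmetric-difference expansion of $f(u)=2u^2 S(u)=\sum_{n\ge 2}2s_{n-2}u^n$, equal to $\sum_{n\ge 2}2s_{n-2}\sum_{k=0}^{n-1}u^k v^{n-1-k}$. For $i,j\ge 1$ only this term contributes, and reading off $[u^iv^j]$ gives $\tilde r_{i,j}=2 s_{i+j-1}=\dfrac{2^{i+j}(i+j+2)!}{6}$.

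Finally, \eqref{Ptr-R} evaluated at $t=1$ reads $P(1;u)=1+3S(u)-3R(u,0)$, and its $u^k$-coefficient for $k\ge 1$ is $3s_k-12 s_{k-1}=\tfrac12 2^k(k+3)!-2^k(k+2)!=2^{k-1}(k+1)(k+2)!$ (the case $k=0$ giving $1$ checks directly). The only non-routine step in the whole scheme is the L'Hôpital computation producing the ODE; every other step is formal power-series manipulation or arithmetic. I would expect no serious obstacle beyond making sure the diagonal specialization $v\to u$ is carried out carefully, since it is the one place where the identity is not a termwise manipulation of coefficients.
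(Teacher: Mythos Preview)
Your argument is correct. Both you and the paper begin by specialising the functional equation~\eqref{R-rec} at $t=1$, where the kernel collapses to $1$ and one is left with
\[
R(u,v)=1+\frac{2u(v-2u)}{v-u}\,S(u)+\frac{2v(u-2v)}{u-v}\,S(v),\qquad S(u):=R(u,u).
\]
From this point the two proofs diverge. The paper extracts the homogeneous component of degree $k$, obtaining a first-order recursion $\tilde R_k(u,v)=2\bigl(u(v-2u)\tilde R_{k-1}(u,u)-v(u-2v)\tilde R_{k-1}(v,v)\bigr)/(v-u)$, then simply \emph{guesses} the closed form $\tilde R_k(u,v)=\tfrac{2^k(k+2)!}{6}\bigl(u^k+v^k+(u^{k+1}-v^{k+1})/(u-v)\bigr)$ and checks it by substitution. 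You instead \emph{derive} the answer: the diagonal limit $v\to u$ (via L'H\^opital) gives the ODE $(1-8u)S(u)-2u^2S'(u)=1$, whose coefficient recursion $s_k=2(k+3)s_{k-1}$ yields $s_k=2^k(k+3)!/6$ directly; the values $\tilde r_{i,0}$ and $\tilde r_{i,j}$ for $i,j\ge1$ then drop out of the specialisation $v=0$ and the divided-difference expansion of $2u^2S(u)$. Your route is more constructive (no guess is needed), while the paper's verification is shorter once the formula is in hand. Both handle the final step, $\tilde p_k=3s_k-12s_{k-1}$, identically.
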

\begin{proof}
  We specialize the functional equations of Lemma~\ref{lem:triangle} to
  $t=1$. Remarkably, the kernel of~\eqref{R-rec} reduces to
  1. Denoting $\tilde R(u,v)=\R(1;u,v)$, we have
$$ 
\tilde R(u,v)=1+2u\frac{v-2u}{v-u} \tilde R(u,u)+ 2v\frac{u-2v}{u-v} \tilde R(v,v). 
$$
This deprives us of our favourite tool (how could we cancel a kernel
that reduces to 1?), but still yields a simple solution. Let
$\tilde\R_k(u,v)$ be the homogeneous component of degree $k$ in
$\tilde R(u,v)$. With the notation $\tilde r_{i,j}$ used in the proposition,
$$
\tilde\R_k(u,v)= \sum_{i+j=k} \tilde r_{i,j} u^i v^j.
$$
The functional equation satisfied by $\tilde R(u,v)$ is equivalent to the
following recurrence of order 1:
$$
\tilde\R_k(u,v)= 2\, \frac{u(v-2u) \tilde\R_{k-1}(u,u)-v(u-2v)\tilde\R_{k-1}(v,v)}{v-u},
$$
with initial condition $\tilde\R_0(u,v)=1$. One readily checks that the
polynomial
$$
\tilde\R_k(u,v)= \frac{2^k(k+2)!} 6 \left( u^k+v^k+
  \frac{u^{k+1}-v^{k+1}}{u-v}\right)
$$
satisfies this recursion, and yields the values $\tilde r_{i,j}$ given
in the proposition.
The expression of $\tilde p_k$ then follows using the second equation
of Lemma~\ref{lem:triangle}.
\end{proof}
\noindent {\bf Remark.} The fact that the numbers $\tilde p_k$ grow faster than
exponentially is not unexpected. In particular, given a square of size $k$, 
the number of partially directed (1-sided) walks  of height $k$ that
fit in this square is $(k+1)^{k+2}$.  Indeed, such walks are
completely determined by choosing
the abscissas of the $k$ vertical steps, of the starting point and
of the endpoint.

 We now move to the length enumeration of triangular prudent walks.
\begin{Proposition}
\label{prop-sol-triangle}
Set $u=\frac{x(1-t)}{(1+tx)(1+t^2x)}$, where $x$ is a new variable.
The \gf\ of triangular prudent walks ending on the right edge of their
 box  satisfies
\beq\label{R-sol}
\R(t; u,tu) = (1+xt)(1+xt^2) \sum_{k \ge 0} \frac{t^{{k+1}\choose 2}
  \left( xt(1-2t^2)\right)^k }{(xt(1-2t^2);t)_{k+1}}\left(
\frac{xt^3}{1-2t^2}; t\right)_k
\eeq
where we have used the standard notation 
$$
(a;q)_n=(1-a)(1-aq)\cdots (1-aq^{n-1}). 
$$
The \gf \ of triangular prudent walks, counted by their length and the size
of the  box, is
$$
\Ptr(t;u)=1+\frac{6tu(1+t)}{1-t-2tu(1+t)}
\big(1+t\left(2u(1+t)-1\right)\R(t;u,tu)\big).
$$
\end{Proposition}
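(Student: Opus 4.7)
The strategy is to apply the kernel method to the functional equation~\eqref{R-rec} for $R(u,v)$, exploiting the symmetry $R(u,v)=R(v,u)$ and treating $R(u,tu)$ as the unknown one-variable boundary series. Rewrite~\eqref{R-rec} so that the kernel
$$
K(u,v)=(u-tv)(v-tu)-tuv(1-t^2)(u+v)
$$
appears as a factor on the left. On the curve $K(u,v)=0$ the left-hand side vanishes, and we obtain a single linear relation between $R(u,tu)$ and $R(v,tv)$; the two unknowns enter symmetrically, reflecting the symmetry of $R$.

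The next step is to parametrize this curve using the rational substitution $u=\frac{x(1-t)}{(1+tx)(1+t^2x)}$ given in the statement. For fixed $u(x)$ the equation $K(u(x),v)=0$ is quadratic in $v$ and should admit a power-series companion root $v(x)$ which, after simplification, turns out to equal $u(\phi(x))$ for the shift $\phi:x\mapsto tx$. This identifies the elementary action on $x$ of the (infinite) group associated to the equation. Substituting on the curve and using $R(v,tv)=R(u(\phi(x)),tu(\phi(x)))$ then yields a first-order $q$-difference equation of the form
$$
f(x)=\alpha(x)+\beta(x)\,f(tx),\qquad f(x):=R(u(x),tu(x)),
$$
with explicit rational coefficients $\alpha(x),\beta(x)$. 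Since $\beta(x)f(tx)$ has strictly larger $t$-valuation than $f(x)$, the recurrence can be iterated formally in $\mathbb{Q}[x][[t]]$.

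Iterating gives
$$
f(x)=\sum_{k\ge 0}\alpha(t^k x)\prod_{i=0}^{k-1}\beta(t^i x),
$$
and the main task becomes to collapse this into the $q$-hypergeometric expression displayed in~\eqref{R-sol}. Here I would factor $\alpha$ and $\beta$ to make the telescoping transparent: the Pochhammer $(xt(1-2t^2);t)_{k+1}$ in the denominator should come from the poles of $\beta$ shifted by $\phi^i$, the Pochhammer $(xt^3/(1-2t^2);t)_k$ from the zeros of $\beta$, the power $(xt(1-2t^2))^k$ from the linear factors of $\alpha\circ\phi^k$, and the exponent $\binom{k+1}{2}$ of $t$ from one extra factor of $t$ accumulated at each iteration step. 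I expect the main obstacle to lie precisely here: locating the clean form of $\alpha,\beta$ under the parametrization so that the telescoping produces the specific constants $1-2t^2$ and $t^3/(1-2t^2)$ is delicate, and any minor misidentification garbles the identification with standard $q$-Pochhammers.

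The second identity of the proposition is routine once $R(u,tu)$ is known. Setting $v=u$ in~\eqref{R-rec} and simplifying yields
$$
R(u,u)=\frac{(1-t)+2tu(1+t)(1-2t)\,R(u,tu)}{1-t-2tu(1+t)},
$$
while setting $v=0$ gives immediately $R(u,0)=1+2tu(1+t)R(u,tu)$, since the coefficient of $R(0,0)$ in the functional equation vanishes at $v=0$. Substituting these two expressions into the formula~\eqref{Ptr-R} of Lemma~\ref{lem:triangle} and collecting common factors produces the announced closed form for $P(t;u)$.
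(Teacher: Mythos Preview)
Your approach is the paper's: parametrize the kernel curve by $(U(x),U(tx))$, obtain the explicit recursion
\[
\Phi(x)=\frac{(1+xt)(1+xt^2)}{1-xt(1-2t^2)}+\frac{xt^2(1+xt)(1-2t^2-xt^3)}{(1+xt^3)(1-xt(1-2t^2))}\,\Phi(tx)
\]
for $\Phi(x)=R(U(x),tU(x))$, and iterate---the factors $(1+xt^{j})$ telescope to $(1+xt)(1+xt^2)$, while the prefactors $xt^{i+2}(1-2t^2)$ of $\beta(t^ix)$ supply both $(xt(1-2t^2))^k$ and the exponent $t^{\binom{k+1}{2}}$ (so your attribution of these to $\alpha\circ\phi^k$ is slightly off, though harmless). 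Your derivation of $P(t;u)$ via the specializations $v=u$ and $v=0$ of~\eqref{R-rec} is correct and coincides with the paper's.
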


\noindent {\bf Comments}\\
\noindent 1. The parametrisation of $u$ in terms of the length variable $t$ and another
variable $x$ is just a convenient way to write
$\R(t;u,tu)$. Equivalently, we have
$$
\R(t; u,tu) = (1+Xt)(1+Xt^2) \sum_{k \ge 0} \frac{t^{{k+1}\choose 2}
  \left( Xt(1-2t^2)\right)^k }{(Xt(1-2t^2);t)_{k+1}}\left(
\frac{Xt^3}{1-2t^2}; t\right)_k
$$
where 
$$
X\equiv X(u)
=
\frac{1-t-ut-ut^2-\sqrt{(1-t)(1-t-2u   t-2ut^2+u^2t^2-u^2t^3)}}
{2ut^3}
$$
is the only power series in $t$ satisfying $u(1+tX)(1+t^2X)=X(1-t)$.  
In particular, the length  \gf\ of triangular prudent walks is
\beq\label{Ptr-length-sol}
\Ptr(t;1)=\frac{6t(1+t)}{1-3t-2t^2}
\big(1+t\left(1+2t\right)\R(t;1,t)\big)
\eeq
where
\beq\label{R1t-sol}
\R(t;1,t) = (1+Y)(1+tY) \sum_{k \ge 0} \frac{t^{{k+1}\choose 2}
  \left( Y(1-2t^2)\right)^k }{(Y(1-2t^2);t)_{k+1}}\left(
\frac{Yt^2}{1-2t^2}; t\right)_k
\eeq
and 
\beq\label{Y-sol}
Y=t X(1)
=
\frac{1-2t-t^2-\sqrt{(1-t)(1-3t-t^2-t^3)}}
{2t^2}.
\eeq
Note that 
\beq \label{Y-alg}
Y=\frac t {1-t} (1+Y)(1+tY).
\eeq

\noindent 2. The value of $\R(u,tu)$ is especially simple when $x=1/(1-2t^2)$,
   that is, for
$u=\frac{1-2t ^2}{(1-t^2)(1+2t)}$. Indeed, for this value of $u$,
   \begin{multline*}
     \R(t;u,tu) = \frac{(1+t-2t^2)(1-t^2)}{(1-2t^2)^2}
 \sum_{k \ge 0} \frac{t^{k+{{k+1}\choose 2}}
  }{(t;t)_{k+1}}\left(
\frac{t^3}{(1-2t^2)^2}; t\right)_k
\\= \frac{1-t}{t(1-2t)} \left( -1+
\prod_{m\ge 1} (1+t^m) \left(1-\frac{t^{2m+1}}{(1-2t^2)^2}\right)\right).
   \end{multline*}
The product form follows from the following
identity~\cite[Corollary~2.7]{andrews}:
\beq\label{phi20}
\sum_{n\ge 0} \frac{t^{{n+1} \choose 2} (a;t)_n}{(t;t)_n}= \prod_{m\ge
  1} (1+t^m) (1-at^{2m-1}).
\eeq
This can be used to prove that neither $\R(t;u,tu)$ nor  $\Ptr(t;u)$ are
D-finite. However, we will derive
from~(\ref{Ptr-length-sol}--\ref{R1t-sol})  the 
following finer result on the length \gf\ $\Ptr(t;1)$. 

\begin{figure}[thb]
\includegraphics[height=3cm,width=3cm]{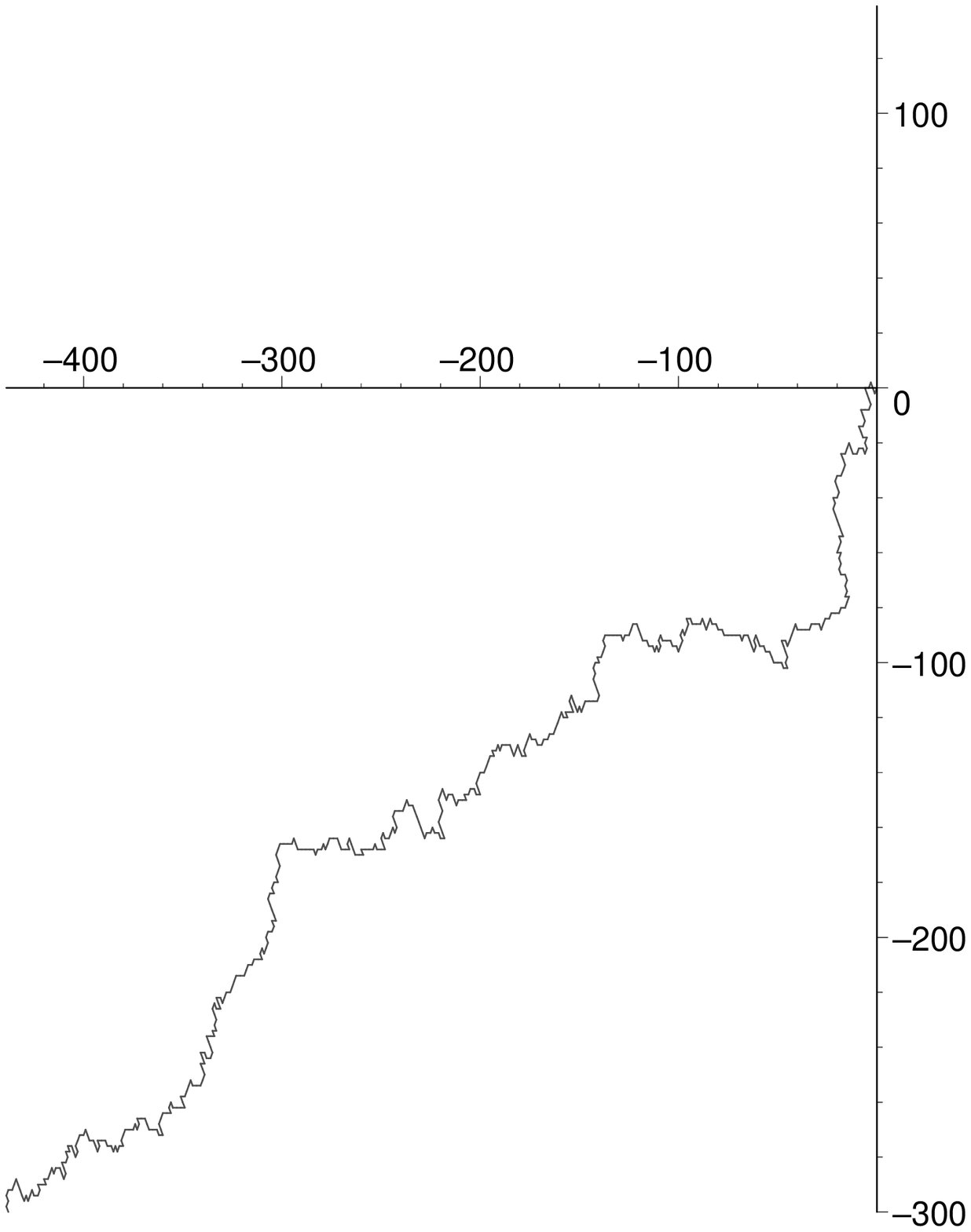}
\hskip 10mm
\includegraphics[height=3cm,width=3cm]{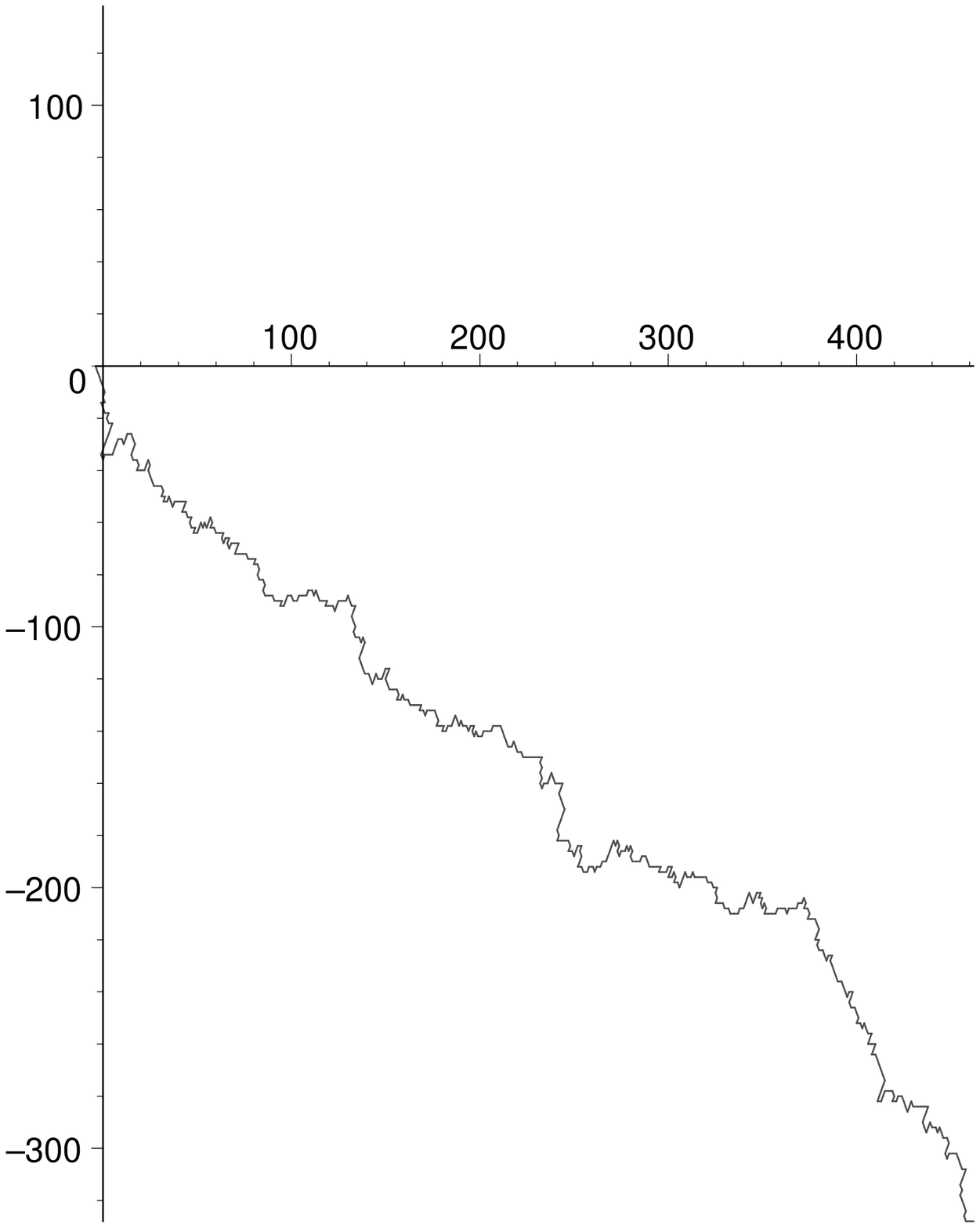}
\hskip 10mm
\includegraphics[height=3cm,width=3cm]{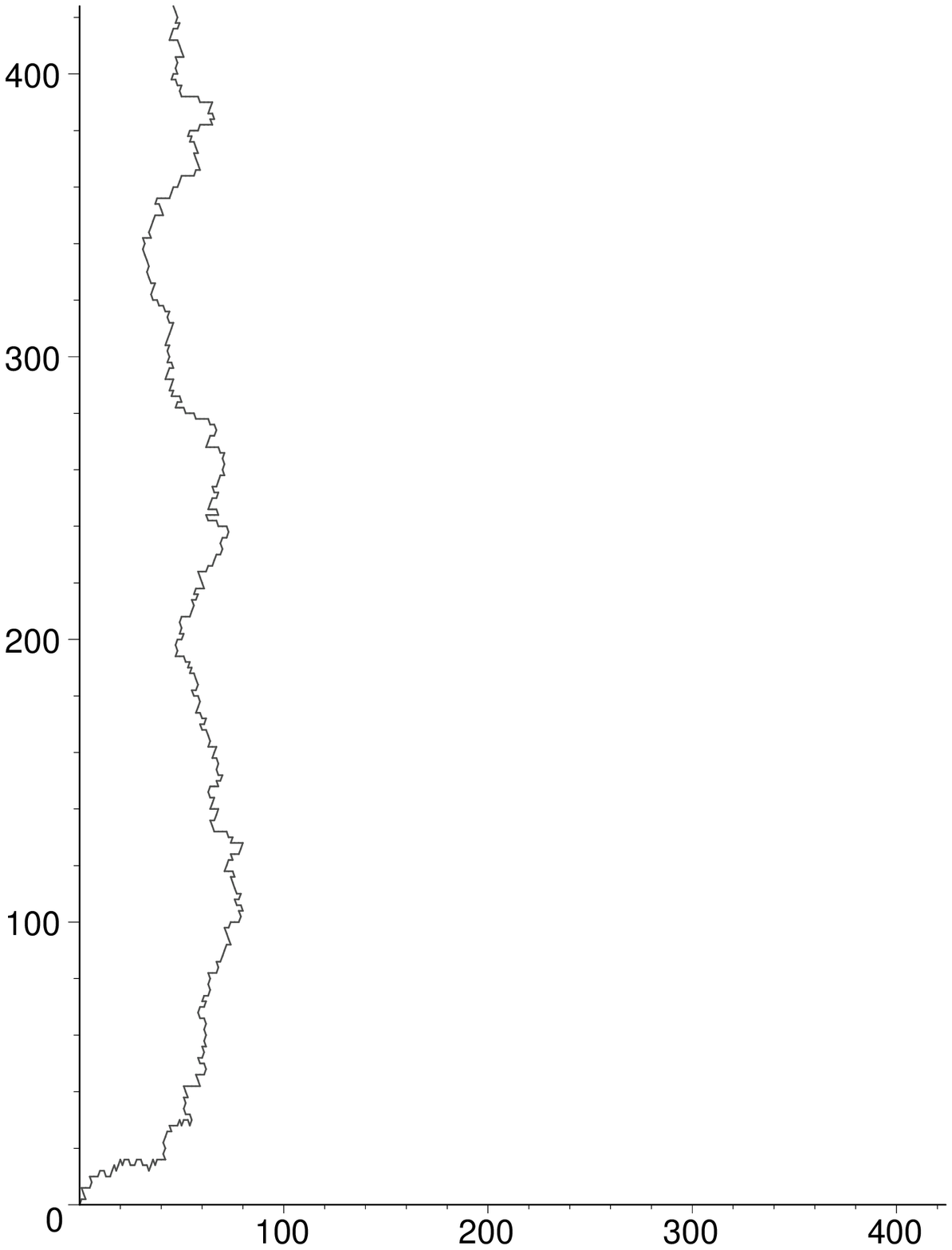} 
\caption{Random triangular prudent walks of length 500.}
\label{fig:random-t}
\end{figure}

\begin{Proposition}[{\bf Nature of the g.f. and asymptotic properties
    of triangular prudent walks}]
\label{prop-nature-triangle}
The length \gf\ $\Ptr(t;1)$ of triangular prudent walks is meromorphic in the
domain $\D=\{t:|t|<1\}\setminus [t_c, 1)$, where 
$t_c\simeq 0.295...$  is the real root of $1-3t-t^2-t^3$. 
In this domain, it has  infinitely many poles, so that it cannot be
D-finite. 
These poles accumulate on a portion of the unit circle, so that
$\Ptr(t;1)$ has a natural boundary.

The series  $\Ptr(t;1)$ has a
  unique dominant singularity, which is a simple pole $\rho= (\sqrt{17}-3)/4\simeq
  0.280$. 
Hence the number of triangular  prudent walks of length $n$ satisfies
$$
p_n \sim \kappa \left(\frac{3+\sqrt{17}}2\right)^n
$$
for some positive constant $\kappa$.

Let $S_n$  denote the size of the box  of a random $n$-step prudent
walk. Then the mean and  variance of $S_n$ satisfy
$$
\E(S_n) \sim  \frak m  \, n,  \quad  \quad
\Var(S_n) \sim  \frak s ^2 \, n ,  
$$
where 
$$
 \frak m = \frac 1 2 \left(1+\frac 1{\sqrt{17}}\right) 
\quad \hbox{and} \quad
\frak s ^2=\frac {12}{17\sqrt {17}},
$$
 and the variable
$
\frac{S_n-  \frak m \, n}{\frak s \sqrt n}
$
converges in law to  a standard normal distribution.

\end{Proposition}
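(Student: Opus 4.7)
The overall strategy mirrors the proof of Proposition~\ref{prop-nature-3sided}: I would exploit the explicit formula for $\Ptr(t;1)$ given by~\eqref{Ptr-length-sol}--\eqref{Y-sol} and perform a singularity analysis in $\D=\{|t|<1\}\setminus[t_c,1)$. Two sources of singularity appear: the rational prefactor $6t(1+t)/(1-3t-2t^2)$, whose only zero of the denominator in $\D$ is precisely $\rho=(\sqrt{17}-3)/4$, and the $q$-series $\R(t;1,t)$ in~\eqref{R1t-sol}, whose denominators $(Y(1-2t^2);t)_{k+1}$ will produce an infinite family of poles accumulating on $|t|=1$.

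First I would establish that $Y(t)$, algebraic by~\eqref{Y-sol}, extends analytically to $\D$ since its branch points are exactly the zeros of $(1-t)(1-3t-t^2-t^3)$, and that $|Y(t)|<1$ on compact subsets of $\D$ (this follows from positivity of the coefficients of $Y$ together with $Y(t_c)<1$, exactly as for the series $q$ in the $3$-sided case). Then I would show that each summand of~\eqref{R1t-sol} is meromorphic on $\D$ and that the tail of the sum is uniformly bounded by a convergent geometric-type series on compact subsets: the factor $t^{\binom{k+1}{2}}$ together with the geometric decay of $Y^k$ dominate the denominators $(Y(1-2t^2);t)_{k+1}$ away from their zero set, just as the analogous bound $|\tU(q^i)|\le|q|^i\tU(|t|;1)$ was used in Section~\ref{sec:3-sided}. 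Consequently $\R(t;1,t)$, and therefore $\Ptr(t;1)$, is meromorphic in $\D$.

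Next I would locate the poles of $\R(t;1,t)$: they are precisely the solutions of $Y(t)(1-2t^2)t^j=1$ for some $j\geq 0$. Using~\eqref{Y-alg} to eliminate the square root, each such equation becomes polynomial in $t$. On the positive axis, $Y(t)(1-2t^2)$ is an increasing function while $t^j$ interpolates between $0$ and $t_c^j$, so one obtains an increasing sequence $(t_j^\star)_{j\geq 0}$ in $(\rho,t_c)$ of real positive poles; the verification that $t_j^\star>\rho$ reduces to checking $Y(\rho)(1-2\rho^2)<1$, which is evaluated explicitly using $1-3\rho-2\rho^2=0$. Each $t_j^\star$ is a simple zero of the denominator, and a sign/alternation argument modelled on Properties~(A)--(B) of the proof of Proposition~\ref{prop-nature-3sided} shows that the numerator does not vanish there, so each $t_j^\star$ is a genuine pole. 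Since $Y$ is bounded in $\D$ while $t^j\to 0$ for $|t|<1$, the equation $Y(t)(1-2t^2)t^j=1$ can hold for large $j$ only when $|t|\to 1$; combined with the positive real sequence above, this yields the accumulation of poles on a portion of $|t|=1$, hence the natural boundary and non-D-finiteness. Uniqueness of $\rho$ as the dominant singularity follows from a standard modulus comparison: for $|t|=\rho$ with $t\neq\rho$, positivity of the coefficients of $Y$ gives $|Y(t)(1-2t^2)t^j|<Y(\rho)(1-2\rho^2)\rho^j<1$ for every $j\geq 0$.

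For the asymptotics and the Gaussian limit law of $S_n$, I would return to the bivariate series $\Ptr(t;u)$ of Proposition~\ref{prop-sol-triangle}. For $u$ in a complex neighborhood of $1$, the analysis above perturbs smoothly: the dominant singularity becomes the simple pole $\rho(u)$ determined by $1-t-2tu(1+t)=0$, a smooth algebraic curve with $\rho(1)=\rho$. Standard quasi-power calculations yield $\mathfrak m=-\rho'(1)/\rho(1)$ and an analogous explicit formula for $\mathfrak s^2$, and asymptotic normality follows from the meromorphic schema of~\cite[Thm.~IX.9]{flajolet-sedgewick}. The main obstacle will be the third step, specifically the sign/alternation argument guaranteeing that each candidate pole $t_j^\star$ is genuine; as in the $3$-sided case, this requires carefully isolating the $j$-th summand and bounding the tail of the series, which is technically delicate but should go through by the same template.
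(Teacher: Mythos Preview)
Your outline follows the template of the 3-sided proof too closely, and in two places this leads to claims that are simply false for the triangular case.

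First, the assertion that $|Y(t)|<1$ on compact subsets of $\D$ is wrong. A direct evaluation of~\eqref{Y-sol} at $t_c$ (where the square root vanishes) gives $Y(t_c)=(1-2t_c-t_c^2)/(2t_c^2)\simeq 1.84$, and already at $t_0\simeq 0.288$ one has $Y(t_0)=1/(1-2t_0^2)\simeq 1.2$. In the 3-sided proof the inequality $|q(t)|<1$ was essential to the convergence bounds; here no such bound is available, and the paper instead relies only on the boundedness of $Y$ on $\D$ together with the super-geometric factor $t^{\binom{k+1}{2}}$ to control the series.

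Second, and more seriously, your ``increasing sequence $(t_j^\star)_{j\ge 0}$ of real positive poles in $(\rho,t_c)$'' does not exist. On $(0,t_c)$ the function $Y(t)(1-2t^2)t^j$ increases from $0$ to roughly $1.52\,t_c^{\,j}$; for $j\ge 1$ this maximum is already below $1$, so the equation $Y(t)(1-2t^2)t^j=1$ has \emph{no} real solution in $(0,t_c)$ except for $j=0$. The paper confirms this: $\R(1,t)$ has a \emph{single} real pole $t_0$. All the other poles are genuinely complex, and their existence cannot be deduced from a monotonicity argument on the real axis. The paper obtains them by a local analysis near $e^{i\theta}$: writing $t=e^{i\theta}(1+s)$ with $s$ small and $\ell$ large, one finds a root of $Y(t)(1-2t^2)t^\ell=1$ with $|t|<1$ precisely when $f(\theta):=|1-2e^{2i\theta}|\,|Y(e^{i\theta})|>1$, and then checks $f(\pi/2)>1$ explicitly. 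Your proposed argument (``$Y$ bounded and $t^j\to 0$ forces $|t|\to 1$'') shows at best where such poles \emph{could} accumulate, not that any exist.

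Finally, the sign/alternation scheme (A)--(B) from the 3-sided proof cannot be transplanted here, since the relevant critical values are complex and there are no signs to alternate. The paper proceeds quite differently: at a critical value it rewrites $N(t)$ using the $q$-series identity~\eqref{phi20}, obtaining a finite product whose factors $1-t^j/(1-2t^2)^2$ can vanish only inside the ``glasses'' curve $|1-2t^2|=1$, away from the accumulation arc. This step is the genuine obstacle you anticipated, but the resolution is algebraic (a product formula) rather than analytic (tail bounds).

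Your treatment of the dominant pole $\rho$ coming from $1-3t-2t^2$ and of the bivariate perturbation for $S_n$ is essentially correct and matches the paper.
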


\noindent \emm Proof of Proposition,~\ref{prop-sol-triangle}.
 The kernel of the functional equation~\eqref{R-rec} reads:
$$
K(u,v)= (u-tv)(v-tu)-tuv(1-t^2)(u+v).
$$
Again we want to cancel it by an appropriate choice of $u$ and
$v$. The kernel is not homogeneous in $u$ and $v$, as it was in the
case of 3-sided walks, but it has another interesting property:
 the curve $K(u,v)=0$ has genus 0,
and  thus admits a  rational parametrisation, namely
$$ 
K(U(x),U(tx))=0 \quad \hbox{ for } \quad U(x)
=\frac{x(1-t)}{(1+tx)(1+t^2x)}.
$$

Now take an indeterminate $x$, and set $u=U(x)$ and $v=U(tx)$. The
series $\R(u,v)$ is well-defined (as a series in $t$ with
coefficients in $\rs[x]$). Since the kernel vanishes, it follows
from~\eqref{R-rec} that
$$
1+tu(1+t)\frac{v-2tu}{v-tu} \R(u,tu)
+ tv(1+t)\frac{u-2tv}{u-tv} \R(tv,v)=0.
$$
Recall that $\R(u,v)=\R(v,u)$ for all $u$ and $v$. 
Denoting $\Phi(x)= \R(u(x),tu(x))$, this equation can be rewritten as
$$
\Phi(x)= \frac{(1+xt)(1+xt^2)}{1-xt(1-2t^2)} 
+ \frac{xt^2(1+xt)(1-2t^2-xt^3)}{(1+xt^3)(1-xt(1-2t^2))} \Phi(xt).
$$
Iterating it gives the value~\eqref{R-sol} of $\R(u,tu)$.

The expression of $\Ptr(t;u)$ in terms of $\R(u,tu)$ follows
from~\eqref{Ptr-R}, after specializing~\eqref{R-rec} to $v=u$ and then $v=0$.
\qed

\medskip
\noindent{\bf Where is the group?} We have solved a new linear
equation with two catalytic variables $u$ and $v$. Again, two
transformations  $\Phi$ and $\Psi$  leave $K(u,v)/u/v$ unchanged:
$$
\Phi(u,v)=\left(\frac{v^2}{u(1+v-t^2v)},v\right),
\quad \Psi(u,v)=\left(u,\frac{u^2}{v(1+u-t^2u)}\right).
$$
They generate an infinite group. Indeed, if one  repeatedly applies
$\Psi\circ\Phi$  to the pair $(U(x),U(tx))$, one obtains all pairs
$(U(t^{2i}x),U(t^{2i+1}x))$ for $i \ge 0$.
 We now proceed to show that the
series $\Ptr(t;1)$ is not D-finite.

\bigskip
\noindent \emm Proof of Proposition,~\ref{prop-nature-triangle}.
Recall the expression~\eqref{Ptr-length-sol} of $\Ptr(t;1)$. The
denominator $1-3t-2t^2$ vanishes at $\rho$ and will be responsible for
the simple dominant pole of $\Ptr(t;1)$. We will see that the series
$\R(t;1,t)\equiv\R(1,t)$, given by~\eqref{R1t-sol}, has a larger
radius of convergence than  $\Ptr(t;1)$. More precisely, we will prove
that $\R(1,t)$ is meromorphic in $\D$, with a unique real pole at
 $t_0\simeq 0.288 \in (\rho,t_c)$, where 
$4\,{t_0}^{4}+2\,{t_0}^{3}-6\,{t_0}^{2}-2\,t_0+1=0$, and infinitely many
non-real poles that accumulate on a portion of the unit circle.

\medskip \noindent 
$\bullet$ {\bf The series $Y(t)$.} 
The square root occurring in the  series $Y$ given by~\eqref{Y-sol} vanishes
at $t=1$, at $t=t_c\simeq 0.295$ and at two other points of modulus
$1.83...>1$. Hence $Y$ has radius $t_c$, but can be  analytically
continued  in the domain $\D$. In this domain, $Y(t)$ is
bounded. Moreover, it is easily seen to be increasing on the segment
$(-1,t_c)$. 



\medskip \noindent 
$\bullet$ {\bf The series  $\R(1,t)$ is meromorphic in $\D$.}
Let us write 
$$
R(1,t)= \frac{N(t)}{D(t)}
$$
where
$$D(t)= (Y(1-2t^2);t)_\infty = \prod_{i\ge 0} (1-Yt^i(1-2t^2))
$$
and
$$
N(t)=(1+Y)(1+tY) \sum_{k \ge 0} {t^{{k+1}\choose 2}
  \left( Y(1-2t^2)\right)^k }\left( 
\frac{Yt^2}{1-2t^2}; t\right)_k
{(Yt^{k+1}(1-2t^2);t)_\infty}.
$$
Clearly $N(t)$ and $D(t)$ are analytic in $\D$ (as $|t|<1$).
 The cancellation of $(1-2t^2)$ does not create any singularity, as 
$$ 
(1-2t^2)^k \left(\frac{Yt^2}{1-2t^2}; t\right)_k
=(1-2t^2-Yt^2)(1-2t^2-Yt^3)\cdots (1-2t^2-Yt^{k+1}).
$$
Hence $\R(1,t)$ is meromorphic in $\D$, and its
poles are found among the values of $t$ such that $Y(t)t^\ell(1-2t^2)=1$ for
some $\ell \in \ns$. All poles are simple, as $Y(t)t^\ell(1-2t^2)=1$
implies  $Y(t)t^m(1-2t^2)=t^{m-\ell}\not =1$ for $m\not = \ell$.

\medskip \noindent 
$\bullet$ {\bf Real poles of  $\R(1,t)$.}
A standard study of the functions $t\mapsto Y(t)$ and $t\mapsto
1/t^\ell/(1-2t^2)$  on the interval $(-1, t_c)$ reveals that the only
possible real pole  of  $\R(1,t)$ is at $t_0\simeq 0.288 <t_c$,
where $Y(t_0)=1/(1-2t_0^2)$. This implies 
$4\,{t_0}^{4}+2\,{t_0}^{3}-6\,{t_0}^{2}-2\,t_0+1=0$.

\begin{figure}[b!]
\begin{center}
\includegraphics[height=3cm] {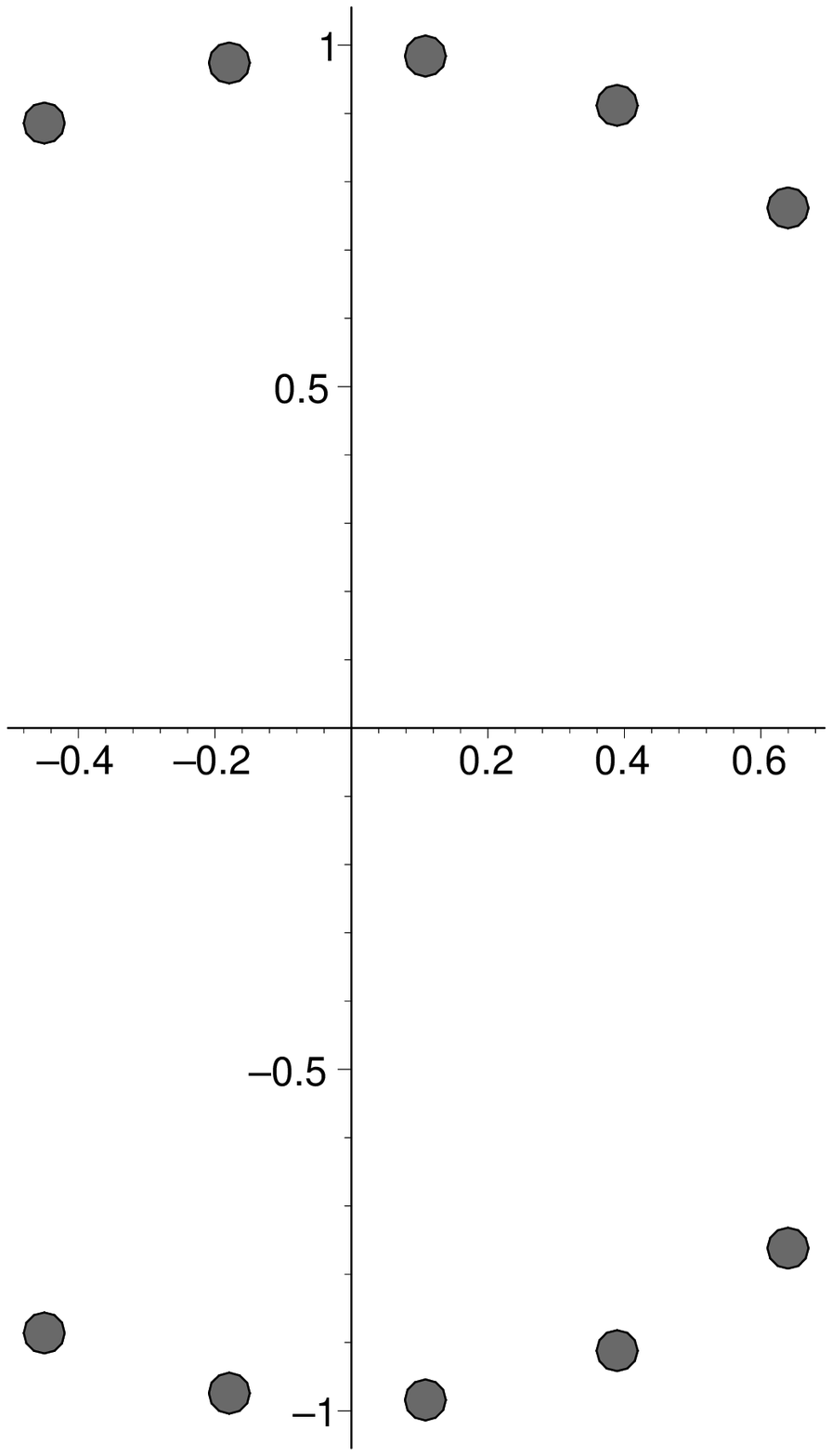}
\hskip 20mm
\includegraphics[height=3cm] {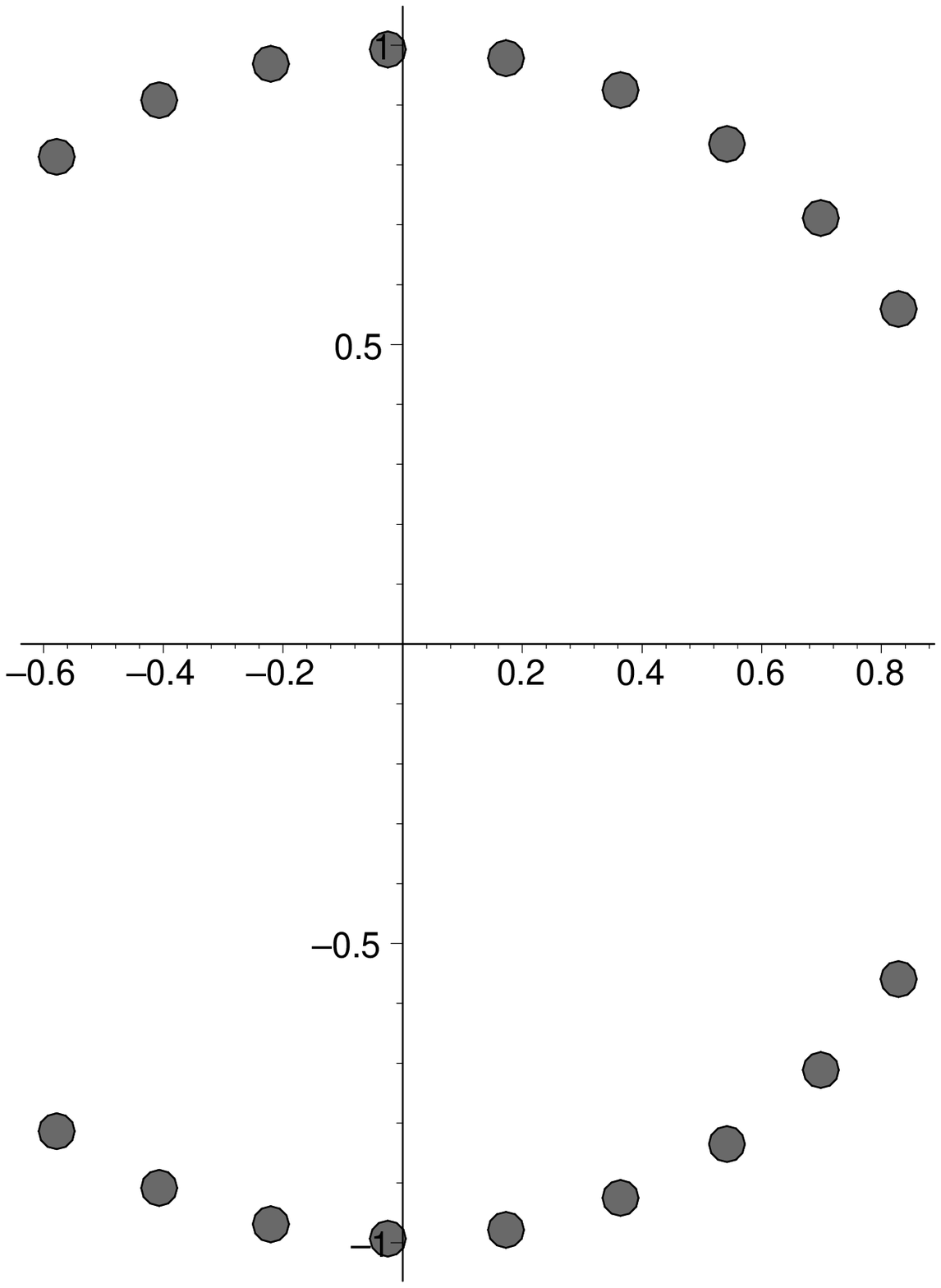}
\hskip 20mm
\includegraphics[height=3cm] {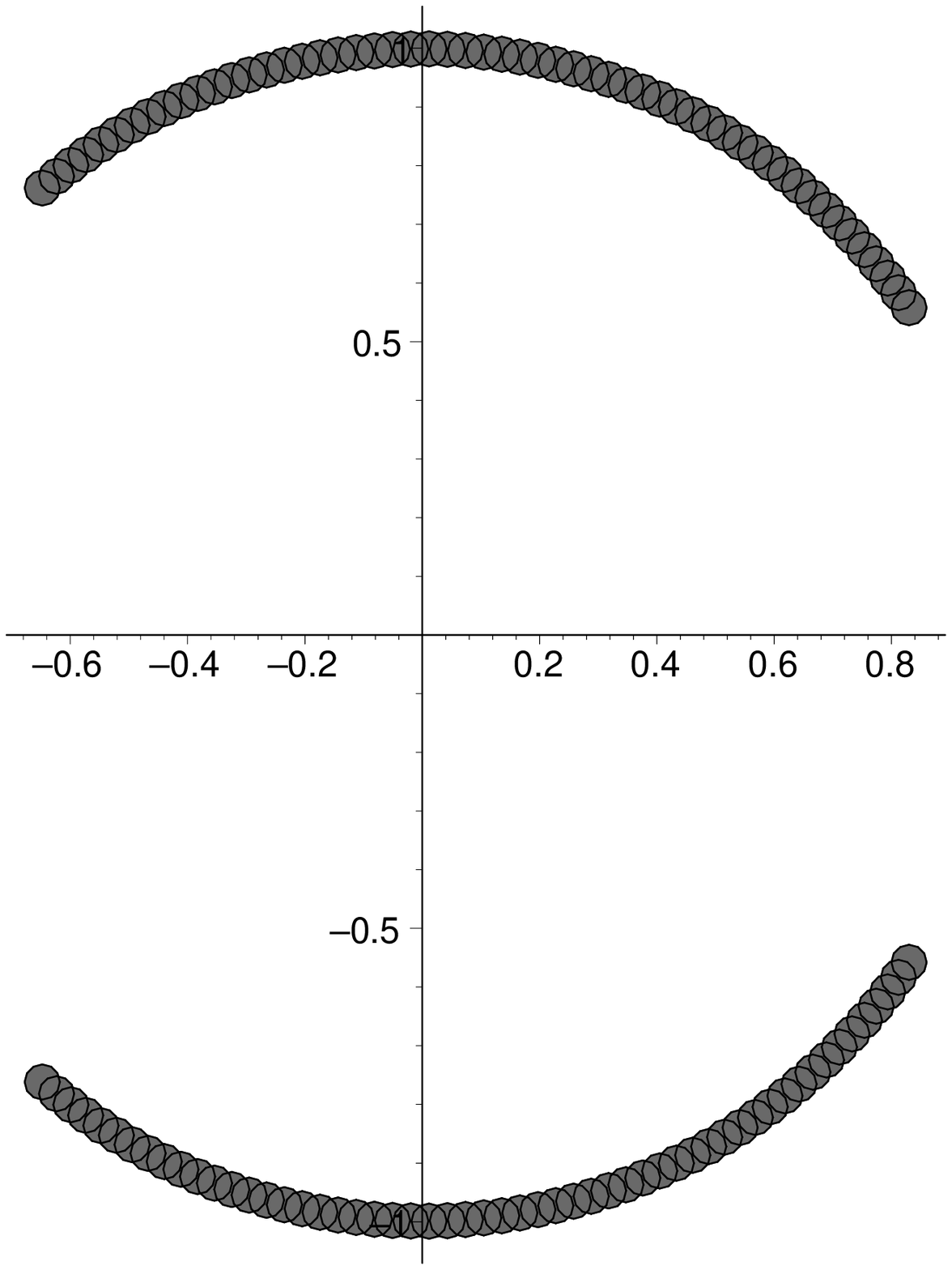}
\end{center}
\caption{The  roots of
  $Y(t)t^\ell(1-2t^2)=1$ lying inside the domain   $\D$ for $\ell=20$,
  $\ell=30$, $\ell=200$.  }
\label{fig:critical}
\end{figure}

\medskip \noindent 
$\bullet$ {\bf Existence of infinitely many poles.}
Let us call a \emm critical value, any $t\in\D$ satisfying  $Y(t)t^\ell(1-2t^2)=1$
for some $\ell$. We have just exhibited a real critical value $t_0$.
Fig.~\ref{fig:critical} shows the critical values obtained for
various values of $\ell$.
We will first prove that $\D$ contains infinitely many critical
values. Then, we will show that almost of them, and in particular
$t_0$, are poles of  $\R(1,t)$.

To prove the former point, we prove that there exist
$0<\theta_0<\theta_1<\pi$ such that  every point $e^{i\theta}$ with
$\theta_0<|\theta|<\theta_1$ is an accumulation point of critical
values. As $Y(\bar z)= \overline{Y(z)}$, it suffices to consider the
case $\theta>0$.
Let   $\theta\in(0,\pi)$ be of the form  $p \pi/q$ for two integers
$p$ and $q$. Let $\ell=2kq$ with $k\ge 0$. Take $t$ in the vicinity of
$e^{i\theta}$, that is $t=e^{i\theta}(1+s)$ with $s$ small. As $Y$ is
analytic in the neighbourhood of $e^ {i\theta}$,
 $$
Y(t)t^\ell(1-2t^2)= Y(e^{i\theta  }) (1-2e^{2i\theta})
\exp(\ell s + O(\ell s^2))(1+O(s)).
$$
This shows that there exists, in the neighbourhood of $e^{i\theta}$,  a
solution $t$ of $Y(t)t^\ell(1-2t^2)=1$  for every (large)
$\ell=2kq$. This solution reads $t=e^{i\theta}(1+s)$ with 
$$
s\sim -\frac 1 \ell \log \left((1-2e^{2i\theta})Y(e^{i\theta})\right).
$$
However,  $t$ will only be critical if it lies in $\D$, that
is, if $|t|<1$. But 
$$
|t|^2= 1-\frac 2 \ell \log \left(|1-2e^{2i\theta}|| Y(e^{i\theta})|\right)+o(\ell^{-1}),
$$
so that $t$ is critical (for $\ell$ large) if and only if $f(\theta):=|1-2e^{2i\theta}||
Y(e^{i\theta})|>1$. The function $f$ is continuous on
$(0,\pi)$. Moreover, at $t= e^{i\pi/2}$, one has $(1-2t^2)=3$ and
$Y(t)=-1+i\pm e^{-i\pi/4}$, so that $f(\pi/2)>1$. This proves the
existence of an interval $(\theta_0,\theta_1)$ where every point
$\theta$ satisfies $f(\theta)>1$ and is thus an accumulation
point of critical values. Fig.~\ref{fig:ftheta} shows a plot of $f(\theta)$,
and gives estimates for the best possible values $\theta_0\simeq 0.59$
and $\theta_1\simeq2.28$, in good agreement with
Fig.~\ref{fig:critical}.

\begin{figure}[hbt]
\begin{center}
\includegraphics[height=35mm,width=35mm]{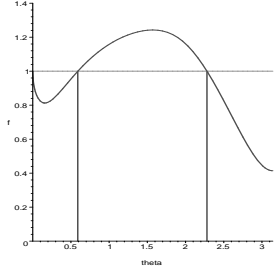}
\end{center}
\caption{The function $f(\theta):=|1-2e^{2i\theta}||
Y(e^{i\theta})|$.}
\label{fig:ftheta}
\end{figure}

Let $t\in \D$ satisfy $Y(t)t^\ell(1-2t^2) =1$. Then $t$ is a pole of
$\R(1,t)$ if and only if $N(t)\not = 0$.
But
$$
N(t)=(1+Y)(1+tY) \sum_{k \ge 0} {t^{{k+1}\choose 2} t^{-k\ell} }\left( 
\frac{t^{2-\ell}}{(1-2t^2)^2}; t\right)_k
{(t^{k+1-\ell};t)_\infty}.
$$
If $k<\ell$, the $k$th summand is zero because of the term
$(t^{k+1-\ell};t)_\infty$.  We thus take $k=  \ell+j$, with $j\ge
0$. We also rewrite $(1+Y)(1+tY)$ using~\eqref{Y-alg}. This gives
\begin{multline*}
N(t)=
t^{-1-{{\ell+1} \choose 2}} \frac{1-t}{1-2t^2}
\left(\frac{t^{2-\ell}}{(1-2t^2)^2};y\right)_\ell\ 
(t;t)_\infty\sum_{j\ge 0} \frac{t^{{j+1}\choose 2}}{(t;t)_j}\left(
\frac{t^2}{(1-2t^2)^2};t\right)_j
\\=
 t^{-1-{{\ell+1} \choose 2}} \frac{1-t}{1-2t^2}
\left(\frac{t^{2-\ell}}{(1-2t^2)^2};t\right)_\ell
(t;t)_\infty \prod_{m\ge 1} (1+t^m) \left(1 -\frac{t^{2m+1}}{(1-2t^2)^2}\right).
\end{multline*}
The final product expression relies on~\eqref{phi20}.

Could this product vanish for our critical value $t\in \D$?
As $|t|<1$,  the only factors
that might be zero are of the form $1-t^j /(1-2t^2)^2$, for $j \ge
2-\ell$. Each such factor has only a finite number of zeroes in
$\D$. In particular, those for which $j<0$, taken together, will only vanish for a
\emm finite, number of critical values, while there are infinitely
many  such values. For $j\ge 0$, the roots of $1-t^j /(1-2t^2)^2$ that lie in the unit disk
satisfy $|1-2t^2|<1$. That is, they are inside  the 'glasses' curve $|1-2t^2|=1$
plotted on Fig.~\ref{fig:lunettes}. In particular, they remain away from the
accumulation points we have exhibited on the unit circle, so that only
finitely many critical values are non-poles.

Could $t_0$, the unique critical value found on the segment $(-1,t_c)$,
not be a pole? For $t=t_0$, one has
$$
\begin{array}{ll}
  \frac{t^j}{(1-2t^2)^2}\ge \frac{1}{(1-2t^2)^2}>1 &\hbox{ for } j \le 0,\\
\frac{t^j}{(1-2t^2)^2}\le \frac{t}{(1-2t^2)^2}\simeq 0.4 <1 &\hbox{
  for } j\ge 1.
\end{array}$$
 Hence $N(t_0)\not =0$, so that $t_0$ is indeed a pole of $\R(1,t)$, and,
 by Pringsheim's theorem,  its radius of convergence.

\begin{figure}
\begin{center}
\includegraphics[height=35mm,width=35mm]{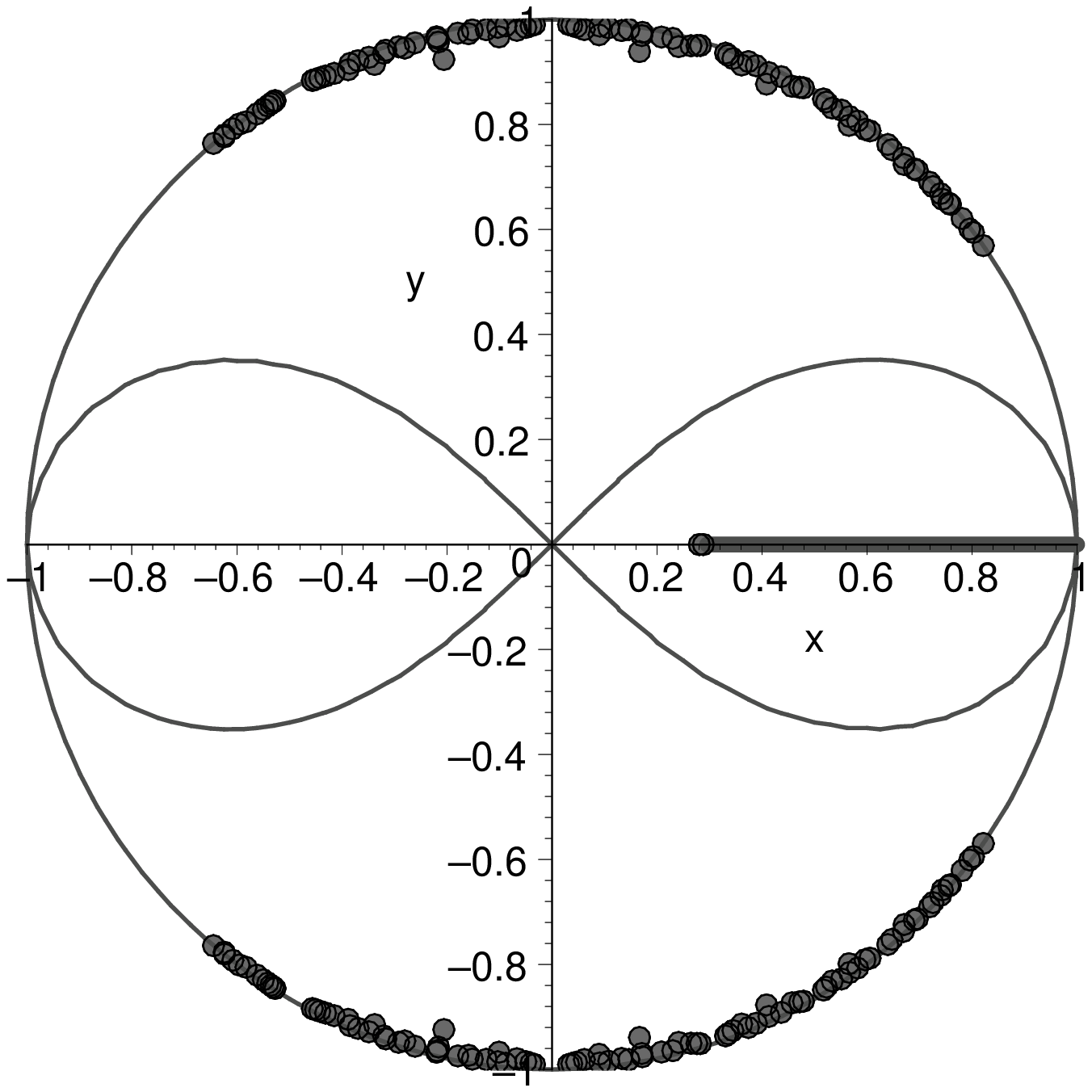}
\hskip 20mm
\includegraphics[height=35mm,width=35mm]{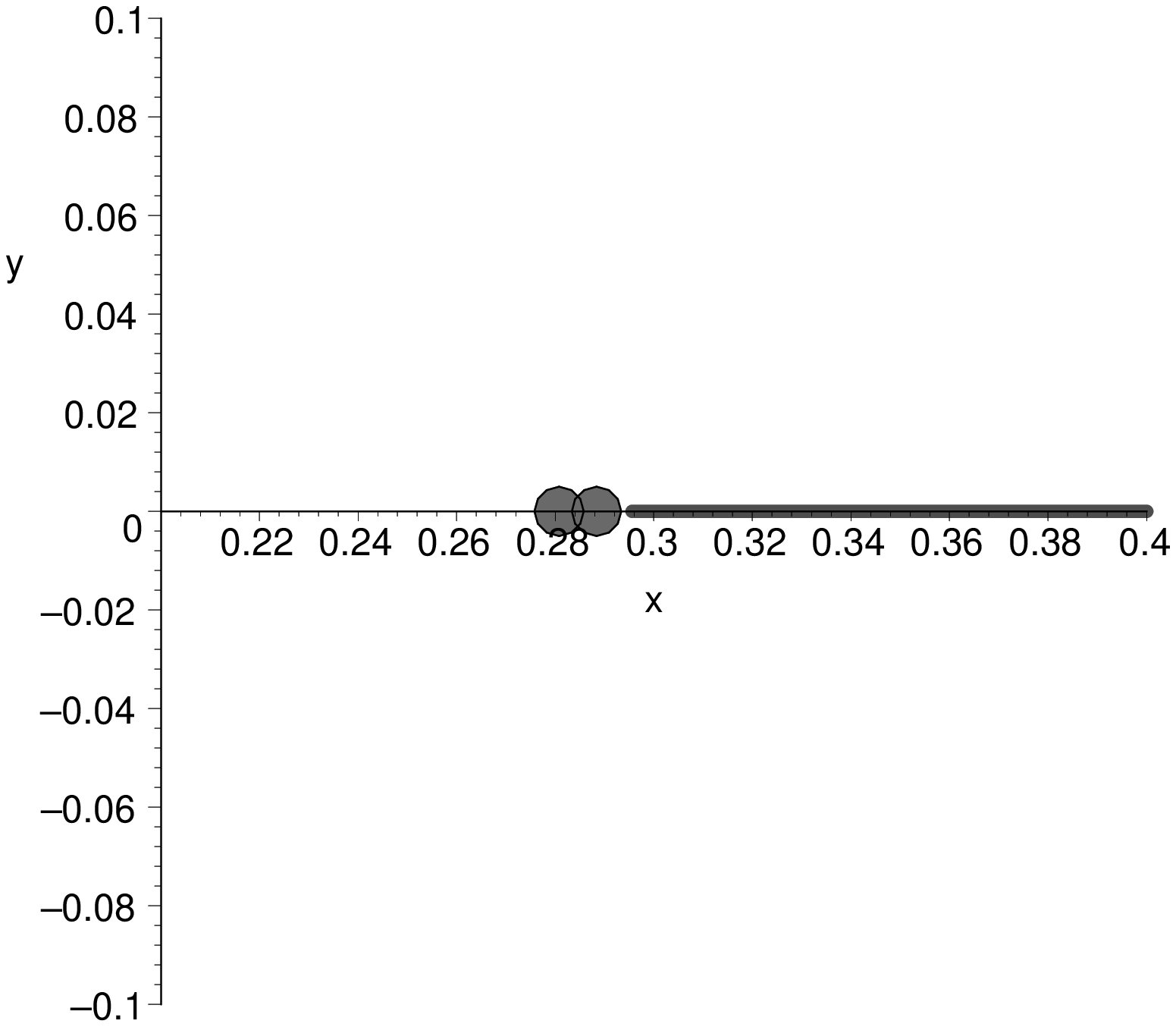}
\end{center}
\caption{The singular landscape of $\Ptr(t;1)$: this series is
  meromorphic inside the unit disk except on a cut $[t_c,1)$, with
  $t_c\simeq 0.295$. There 
  are two simple poles on the real axis before $t_c$, namely
  $\rho\simeq 0.280$ and $t_0\simeq 0.288$. Almost all critical values, which
  accumulate on a portion of the unit circle, are simple poles. The
  'glasses'  curve is  $|1-2t^2|=1$. To the right, a zoom in the
  neighbourhood of 0.28.}
\label{fig:lunettes}
\end{figure}

\medskip \noindent 
$\bullet$ {\bf The singular landscape of $\R(1,t)$ and $\Ptr(t;1)$.}
We can now conclude as to the nature of the series $\R(1,t)$:  this series
is meromorphic in the domain $\D$, with a  dominant pole at
$t_0$ and infinitely many poles that accumulate on a portion of the
unit circle, $\{e^{i\theta} : \theta_0 < |\theta|<\theta_1\}$.

In view of~\eqref{Ptr-length-sol}, the same holds for the series
$\Ptr(t;1)$, with 
$t_0$ replaced by $\rho$ as the denominator $1-3t-2t^2$ induces a new
pole $\rho<t_0$. This is illustrated in Fig.~\ref{fig:lunettes}. Clearly, $\Ptr(t;1)$ has no other pole of
modulus less that $t_0$. Hence the number of $n$-step triangular prudent
walks is, as announced, asymptotic to $\kappa \rho^{-n}$.

\medskip \noindent 
$\bullet$ {\bf The size of the box.}
Consider the expression of
$\Ptr(t;u)$ given in  Proposition~\ref{prop-sol-triangle}.
The singular pattern we have found for $u=1$ persists when $u$ is in a
neighborhood of 1. The radius of $R(u,tu)$ is reached when
$Xt(1-2t^2)=1$, but the pole of $\Ptr(t;u)$ given by the 
cancellation of $1-t-2tu(1+t)$ is smaller in modulus. We are again in the
meromorphic schema of~\cite[Thm.~IX.9]{flajolet-sedgewick}, and the limit
behaviour of $S_n$  follows.

\qed

\section{Random generation}
\label{sec:random}
\subsection{Generating trees and random generation}
A \emm generating tree, is a rooted tree with labelled nodes satisfying
the following property: if two nodes have the same label, the 
multisets of labels of their children are the same. 
Given a label $\ell$, the multiset of labels of the children of a node
labelled $\ell$ is denoted $\De(\ell)$. The rule that
gives  $\De(\ell)$ as a function of $\ell$  is called the
\emm rewriting rule, of the tree. 

Consider a class $\C$ of walks  closed by taking
prefixes: if $w$ belongs to $\C$, then so do all prefixes of
$w$ (this is the case for our classes of prudent walks). One can then
display the elements of $\C$ as the nodes of a  generating tree: 
 the empty walk sits at the root of the tree, and the 
children of the node labelled by $w$  are the walks of the form
$ws$ belonging to $\C$, where $s$ is a step. Fig.~\ref{fig:tree} (top) shows the
first few levels of the generating tree of 2-sided walks. An even simpler example is the tree of \emm directed walks, (walks with N and E
steps), which has the following rewriting rule:
$$
w \rightarrow 
\left\{
\begin{array}{lll}
wN \\
wS
\end{array}
\right. .
$$

\begin{figure}[pbth]
  \begin{center}
  \scalebox{0.42}{   \input{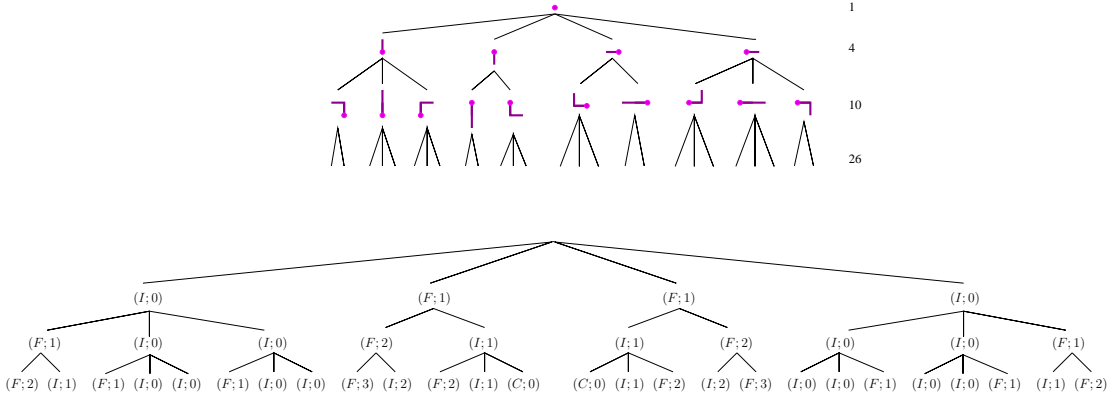}}
    \caption{The generating tree of 2-sided walks (top) and one of its good
    labellings (bottom).}
    \label{fig:tree}
  \end{center}
\end{figure}

In this context, the (uniform) random generation of a walk of $\C$ of size $n$
is equivalent to the random generation of a path of length $n$ in the
tree, starting from the root. This path can be built recursively as follows:
one starts from the empty walk (the root of the tree), and builds the
walk step by step, choosing each new step \emm with the right
probability,. If at some point one has obtained a walk $w$ of
length $n-m$, with $m>0$,  then
the probability of choosing $s$ as the next step must be
$$
\GP(s|w)= \frac{\Ex(ws,m-1)}{\Ex(w,m)}
$$
where $\Ex(w,m)$ is the number of \emm extensions of $w$, of length
$m$, that is,  of walks $w'$ of length $m$ such that the concatenation
$ww'$ belongs to $\C$. Clearly, $\Ex(w,m)=0$ if $w\not \in \C$.
For $w \in \C$, the numbers $\Ex(w,m)$ can be computed inductively: 
$$
\Ex(w,m)=
\left\{
\begin{array}{llll}
1 & \hbox{if } 
m=0,\\
\sum_{ws \in \De(w)} \Ex(ws,m-1) &\hbox{otherwise.}
\end{array}
\right.
$$
This is the basic principle of  recursive random
generation, initiated in~\cite{nijenhuis-wilf-book}.
The (time and space) complexity of the  procedure that generates an
$n$-step walk of $\C$ depends on how
efficiently one solves  the following two problems:
\begin{itemize}
\item [$(1)$] compute the numbers $\Ex(w,m)$, for all walks $w$ of
  length $n-m$,
  \item[$(2)$] given a walk $w$ of length less than $n$, decide which
  steps $s$ are \emm   admissible,, that is, are such that   $ws$
  belongs to $\De(w)$.
\end{itemize}
We will explain how both tasks can be  achieved by introducing
  \emm good labellings, of the class $\C$.  A function $L$ defined on
  $\C$ is a  good labelling if for all $w \in \C$, the multiset 
$\{L(w'): w' \in \De(w)\}$ depends only on $L(w)$.
 This means that the tree obtained by replacing each node 
$w$ of the generating tree by $L(w)$ is itself a generating
tree. We will illustrate this
  discussion with the class  of 1-sided walks. 

\noindent{\bf (1) The numbers  $\Ex(w,m)$.} 
When $\C$ is the class of 1-sided walks, these numbers only depend on
the \emm direction, (horizontal or   vertical, $H$ or $V$ for short) of
  the final step of $w$ (and of course on $m$).   Denote
  this direction by $L(w)$. Then it is easy to see that $L$ is
  a  good labelling:   the  tree obtained by replacing each node  
$w$ of the generating tree by $L(w)$ is itself a generating
tree, with rewriting rule
\beq\label{rule-VH}
V \rightarrow 
\left\{
\begin{array}{lll}
V\\
H\\
H
\end{array}
\right. ,
\quad 
H \rightarrow 
\left\{
\begin{array}{lll}
V\\
H
\end{array}
\right. .
\eeq
(By convention, the label of the empty walk is set to $V$.)

For a general class $\C$, any good labelling $L$  allows us to
compute the numbers $\Ex(w,m)$. Indeed, they can be   rewritten as
$\Ex(L(w),m)$, with 
\beq\label{Ex-rec-L}
\Ex(\ell,m)= \left\{
\begin{array}{llll}
1 & \hbox{if }  m=0,\\
\sum_{\ell' \in \De(\ell)} \Ex(\ell',m-1) &\hbox{otherwise.}
\end{array}
\right.
\eeq
In the case of partially directed walks, we have
$$
\Ex(V,m)=\Ex(V,m-1)+ 2\,\Ex(H,m-1),
\quad 
\Ex(H,m)=\Ex(V,m-1)+\Ex(H,m-1).
$$ 
A good labelling is \emm compact, if it takes its values in $\zs^k$,
with $k$ fixed. In this case, it is usually easy to determine
$\De(\ell)$, given $\ell$. In order to fasten  the computation of the
extension numbers, one tries to minimize the number of distinct labels $L(w)$
occurring in the first $n$ levels of the generating tree. This 
solves Problem~(1) above.

\noindent{\bf (2) Admissible steps.}
We are left with the second problem: determine the admissible
steps. Moreover, we also need to decide, for every admissible step
$s$, which of the elements of $\De(L(w))$ is the label of $ws$. That
is, we need a way to \emm correlate,  new  steps with  new
values of the label. 
In our  random generation procedures, this will be achieved thanks to
a second good labelling $P$ having the following properties:
\begin{itemize}
\item[--] it refines the first one; that is, $P(w)=(L(w) | M(w))$, and the
rewriting rule of $P$ extends
 that of $L$,
\item[--]  the last step of  $w$ is $S(P(w))$, for a simple function $S$.
\end{itemize}
For instance, in the case of 1-sided walks, we 
keep the label $V$ for walks ending with a vertical step, but
introduce two versions of $H$, namely $(H\ |\ 1)$ (for a final $E$
step) and $(H\ |-1)$ (for a final $W$ step), 
and refine the rule~\eqref{rule-VH} as follows:
$$
V \rightarrow 
\left\{
\begin{array}{lll}
V\\
(H \ |\  1)\\
(H \ | -1)
\end{array}
\right. ,
\quad 
(H\  |\  1) \rightarrow 
\left\{
\begin{array}{lll}
V\\
(H\ |\  1)
\end{array}
\right.,
\quad 
(H\  |- 1) \rightarrow 
\left\{
\begin{array}{lll}
V\\
(H\ | -1)
\end{array}
\right. .
$$
The last step function is given by $S(V)=N$, $S(H \ |\ 1)=E$, $S(H\ | -1)=W$.
This new rewriting rule is  less compact than~\eqref{rule-VH},
and not needed for the 
calculation of the extension numbers (why compute and store
separately $\Ex((H\ |\  1),m)$ and $\Ex((H\  |- 1),m)$ while they are
equal?). However, 
it \emm is, needed for the generation stage.

In  our random generation procedures below we use the $L$-labels to
compute the extension numbers, and the $P$-labels to
determine the admissible steps and associate with them the correct new label.
Accordingly, the algorithm involves two procedures:
\begin{itemize}
\item The procedure $\Ex(\ell, m)$  computes the extension numbers
  using~\eqref{Ex-rec-L}, 
\item 
 The generation procedure reads (denoting
  $p[1]=\ell$ for a  $P$-label $p=(\ell| m)$):

\begin{quote}
\begin{tt}
gen:=proc(n)\\
{\rm \bf Initialisation}
w:=[]: p:=P(w): $\ell$:=p[1]: m:=n:\\
while m>0 do 

[p$_1$,$ \ldots$, p$_k$]:= $\De$(p):

for i from 1 to k do $\ell_i$:=p$_i$[1] od:

U:=Uniform(0,1): d:=U*$\Ex$($\ell$,m):

i:=1:

while d> $\Ex$($\ell_1$,m-1) +$\cdots$ +$\Ex$($\ell_i$,m-1) do
i:=i+1: od:

p:=p$_i$: s:=S(p): w:=ws: \\
od:
\end{tt}
\end{quote}
\end{itemize}

Since all our procedures have this form, the following sections only
describe the 
labellings $L$ and $P$ that we use for each class of walks. In all
cases, the number of children  of a $P$-label is bounded, so that the
generation procedure is achieved in linear time.
We first discuss the simple case of 2-sided
walks, and then the more complex case of general prudent walks (on the square
lattice). Once the principles underlying the corresponding labellings
are understood, it is not difficult to adapt them to 3-sided 
prudent walks
and  to triangular prudent walks. 

\subsection{Two-sided prudent walks}
A non-empty 2-sided walk $w$ can be of three different types, $I$,
$C$ or $F$, depending on the nature of its final step $s$:
\begin{itemize}
  \item[$(I)$] either  $s$ is an \emm inflating, step, which moves the
    North or East edge of the    box; 
\item[$(C)$] or  $s$ walks along the North or East edge of the
  box, moving  the endpoint \emm  closer, to the NE corner, 
\item[$(F)$] or  $s$  walks along the North or East edge, moving
  the endpoint \emm further, away from the NE corner.
\end{itemize}
We label $w$ by
$L(w)=(T;i)$, where $T$ is the type of $w$, and $i$ is the
distance of the endpoint of $w$ to the NE corner of its 
box.
It is not hard to realize that $L$ is a good labelling: the labels of
the children of a node 
$w$ of the generating tree only depend on $L(w)$. These labels are
described by the following rewriting rule:
$$
(I;i) \rightarrow 
\left\{
\begin{array}{lll}
(I;i) \\
(F; i+1) \\
(C;i-1) & \hbox{if } i >0\\
(I;0) & \hbox{if } i=0
\end{array}
\right. ,
\quad (C;i)  \rightarrow 
\left\{
\begin{array}{lll}
(I;i)\\
(C;i-1) & \hbox{if } i >0\\
(I;0) & \hbox{if } i=0
\end{array}
\right. ,
\quad (F;i)  \rightarrow 
\left\{
\begin{array}{lll}
(I;i) \\
(F;i+1) 
\end{array}
\right.
.
$$
We do not define the label of the empty walk. Two of its four
children have label $(I;0)$ (corresponding to North and
East steps), while the other two (corresponding to South and West
steps) have label $(F;1)$. The top of the  label based tree is shown
in Fig.~\ref{fig:tree} (bottom).  

The number of extensions of length $m$ of a walk labelled $\ell$ can now
be computed using~\eqref{Ex-rec-L}. For a walk of length $m$ and
label $(T;i)$, one has $i\le m$. Hence $O(n)$ different labels occur
in the first $n$ levels of the tree, and the calculation of the extension
numbers $\Ex(\ell,m)$ needed to generate a random
walk of length $n$ requires $O(n^2)$  operations.

The $L$-label $L(w)$ does not determine the last step $s$ of
$w$.  For the generation stage, we refine it by
defining $P(w)=(L(w) \ | \ s)$. The last step function is of course
$S(P(w))=s$. The generating tree based on the labelling $P$ has
rewriting rule:
$$
(I;i 
\ | \ s) \rightarrow 
\left\{
\begin{array}{llrll}
(I;i &|& s ) \\
(F; i+1 &|& 3-s ) \\
(C;i-1 &|& 1-s ) &\hbox{if } i >0\\
(I;0 &|& 1-s) & \hbox{if } i=0
\end{array}
\right. ,
$$
$$
(C;i\ | \ s)  \rightarrow 
\left\{
\begin{array}{llrl}
(I;i &|& 1-s)  \\
(C;i-1 &|& s )& \hbox{if } i >0\\
(I;0 &|&   s)& \hbox{if } i=0
\end{array}
\right. ,
\quad (F;i\ | \ s ) \rightarrow 
\left\{
\begin{array}{llr}
(I;i &|& 3-s )\\
(F;i+1 &|&  s)
\end{array}
\right.
.
$$
We have written $s=0$ (resp. $1$, $2$, $3$) for a North
(resp. East, South, West) step.
The four children of the root have labels $(I;0 \ |\ 0)$, $(I;0 \ |\
1)$, $(F;1 \ |\ 2)$,   $(F;1 \ |\ 3)$.

\bigskip
\noindent {\bf Remark.} As shown in~\cite{duchi}, the language on the
alphabet $\{N,S,E,W\} $ that naturally encodes 2-sided walks is \emm
non-ambiguous context-free,. This implies that one can generate these walks using
another type of recursive approach, based on factorisations of the
walks~\cite{flajoletcalculusINRIA}. This approach achieves a
better complexity than our naive step-by-step construction:
only  $O(n)$ operations are needed in the precalculation
stage. Moreover, it can be optimized further  using the principles of
\emm Boltzmann approximate-size sampling,~\cite{boltzmann}. As the
other classes of prudent walks we have studied cannot be described by
context-free grammars, we have focussed above on the step-by-step recursive
approach. Independently,
we have also implemented the Boltzmann approach, which produced
the second and third walks of Fig.~\ref{fig:random-2s}.

\subsection{General prudent walks on the square lattice}
\label{sec:generation-prudent}
A non-empty prudent walk $w$ can be of two different types, $I$ or $A$,
depending on the nature of its final step $s$:
\begin{itemize}
  \item[$(I)$] either  $s$ is an \emm inflating, step, which moves one of the
  edges of the  box,
\item[$(A)$] or $s$ walks \emm along, one of the edges of the box.
\end{itemize}
Consider the last edge of the box that has moved while
constructing $w$.  Say it is
the North edge (the other cases are treated similarly after  a
rotation). Then $w$ ends on this edge. If the final step $s$ 
is inflating (North),
or walks along the top edge  in \emm clockwise, direction (East), 
let $i$ (resp. $j$) be the distance between the endpoint
of $w$ and the NE (resp. NW)  corner of the box.  Otherwise,
$s$ walks along the top edge in a counterclockwise direction (West):
 exchange the roles of $i$ and $j$. The top edge has length
$i+j$. Let $h$ the other dimension of the box. We label $w$ by 
$L(w)=(I; \{i,j\}, h)$ if $w$ is inflating, by $L(w)=(A; i, j, h)$ otherwise. 
If $i=j$, the set $\{i,j\}$ has to be understood as the multiset where
$i$ is repeated twice.

It is not hard to realize that the labels of the children
of a node  $w$  only depend on $L(w)$. These labels are
described by the left part of the  rewriting rules given below (ignoring
for the moment what follows the vertical bar).
As $i, j$ and $h$ are bounded by $n$, the calculation of the extension
numbers needed to  generate a random walk of length $n$ will require
$O(n^4)$  operations. 

The label $L(w)$ does not determine the last step $s$ of
$w$.  For the generating stage, we refine it  into a $P$-label as follows:
\begin{itemize}  
\item[--] 
for a walk of type $I$, we keep track of the \emm ordered, pair  $(i,j)$
and of  the last step $s$; hence the $P$-label reads $(I;\{i,j\},h \,|\, i,j,s)$,
 \item[--] for a walk of type $A$, we keep track of the last step $s$
 and of its  direction $d$  around the box ($d=1$ if $s$ goes in
a clockwise direction around the box, $d=-1$ otherwise). The refined
 label thus reads $(A; i,j,h\, |\,s,d)$.
\end{itemize}
The combined rewriting rule reads:
$$\begin{array}{l}
(I;\{i,j\},h \ | \  i,j,s) \rightarrow
\left\{\begin{array}{llrll}
(I;\{i,j\},h+1 & |&  i,j,s )\\
(A;i-1,j+1,h  &|&  s+1,1 ) & \hbox{if } i >0\\
(A;j-1,i+1,h  &|&  s-1,-1 ) & \hbox{if } j >0\\
(I;\{0,h\}, j+1  & |&  h,0,s+1 ) & \hbox{if }i=0\\
(I;\{0,h\}, i+1  &|&  0,h,s-1 ) & \hbox{if }j=0
\end{array}\right. ,\\
\\
(A;i,j,h \ | \ s,d ) \rightarrow
\left\{\begin{array}{llrl}
(I;\{i,j\},h+1 &|&  i \frac {\, d\, }{}j, j \frac {\, d\, }{}i , s-d)
\\
(A;i-1,j+1,h   &|&  s,d  ) & \hbox{if } i >0\\
(I;\{0,h\}, j+1  &|&   h \frac {\, d\, }{}0,  0 \frac {\, d\, }{} h,  s ) & \hbox{if } i=0
\end{array}\right. ,
\end{array}$$
with
\beq\label{notationij}
i \frac {\, d\, }{}j =
\left\{
\begin{array}{ll}
  i &\hbox{if } d=1,\\
j &\hbox{if } d=-1.
\end{array}\right.
\eeq
Again, we have written $s=0$ (resp. $1$, $2$, $3$) if $s$ goes North
(resp. East, South, West), and these values are taken modulo 4.
The four children of the root have labels $(I;\{0,0\},1 \ |\ 0,0,s)$
for $s=0, 1, 2,3$.


\subsection{Three-sided prudent walks}
 
A non-empty 3-sided walk can be of five different types, $I_v$,
$I_h$,  $A$, $C$, or $F$,
depending on the nature of its final step $s$:
\begin{itemize}
  \item[$(I_v)$] either  $s$ is a \emm vertical,  inflating step (it moves
    the North  edge of the  box),
 \item[$(I_h)$] or  $s$ is a \emm horizontal, inflating step (it moves
    the East or West  edge of the  box),
  \item[$(A)$] or $s$ walks \emm along, the top edge,
\item[$(C)$] or $s$ walks upwards on a vertical edge, moving the
  endpoint of the walk \emm   closer, to the top end of the edge,
\item[$(F)$] or $s$ walks downwards on a vertical edge, moving the
  endpoint \emm   further, away from the top end of the edge.
\end{itemize}
For a walk $w$ of type $I_v$ or $A$, we denote by $i$
and $j$ the  distances between the endpoint of the walk and the NE and
NW corners, with the same convention as for general prudent walks
(Section~\ref{sec:generation-prudent}). Otherwise, $i$ denotes the
distance between the 
endpoint of $w$ and the top end of the vertical edge where $w$ ends,
 and $j$ denotes the width of the box. The
label of a walk $w$ of type 
$T$ is $L(w)=(I_v; \{i,j\})$ if $T=I_v$ and  $(T;i,j)$  otherwise.
Then $L$ is a good labelling, described by the first part of 
the  rewriting rules given below.
The calculation of the extension numbers needed to  generate a random
walk of length $n$  requires $O(n^3)$  operations.

The $L$-labelling does not determine the last step $s$ of
$w$.  For the generating stage, we refine it as follows:
\begin{itemize}
  \item[--] for a walk of type $I_v$, we keep track of the ordered
  pair  $(i,j)$, 
  \item[--] for a walk of type $A$ or $I_h$, we keep track of the last
  step $s$, 
\item[--] for a walk of type  $C$ or $F$, we keep track of the direction $d$
  of the last step around the box:
  $d=1$ if the last step goes in a clockwise 
direction around the box or consists only of South steps, $d=-1$ otherwise. 
\end{itemize}
 The last step is explicitly given by the refined label for types $A$ and
$I_h$, and is otherwise obtained via the last step function $S$ defined
by $S(I_v ;\ldots) =S(C; \ldots)=0$, $S(F;\ldots)=2$.
The generating rules read, with the notation~\eqref{notationij}:
$$
\begin{array}{ccl}
(I_v; \{i,j\} \ |\  i,j \ ) &\rightarrow &
\left\{
\begin{array}{llrll}
(I_v; \{i,j\} &|& i,j  ) \\
(A;i-1,j+1  &|& 1) & \hbox{if } i >0\\
(A;j-1,i+1 &|& 3) & \hbox{if } j >0\\
(I_h;0, j+1  &|& 1) & \hbox{if } i=0\\
(I_h;0, i+1  &|& 3) & \hbox{if } j=0
\end{array}
\right. ,\\
\\
 (I_h;i,j  \ |\ s) &\rightarrow &
\left\{
\begin{array}{llrll}
(I_h;i,j+1 &|& s) \\
(F; i+1,j &|&2-s)\\
(C;i-1,j  &|& s-2) & \hbox{if } i >0\\
(I_v;  \{0,j\}   &|& j\frac{\,s-2\,}{}0 , 0\frac{\,s-2\,}{} j  ) & \hbox{if } i=0
\end{array}
\right.,\\
\\
  (A;i,j  \ |\ s) &\rightarrow &
\left\{
\begin{array}{llrll}
(I_v; \{i,j\}  &|& j\frac{\,s-2\,}{}i ,i\frac{\,s-2\,}{}j ) \\
(A;i-1,j+1  &|& s) & \hbox{if } i >0\\
(I_h;0, j+1  &|& s) & \hbox{if } i=0
\end{array}
\right.,\\
\\
 (C;i,j  \ | d) &\rightarrow &
\left\{
\begin{array}{llrll}
(I_h;i,j+1 &|& 2+d) \\
(C;i-1,j &|&  d) & \hbox{if } i >0\\
(I_v;  \{0,j\}  &|& j\frac{\,d\,}{}0 ,0\frac{\,d\,}{}j   ) & \hbox{if } i=0
\end{array}
\right.,\\
\\
 (F;i,j  \ |\ d) &\rightarrow &
\left\{
\begin{array}{llrll}
(F; i+1,j &|& d) &\\
(I_h;i,j+1 &|& 2-d) & 
\\
(I_h;i,1 &|& 3)   &\hbox{if } j =0  
\end{array}
\right. .
\end{array}
$$
 The four children of the root have
labels $(I_h; 0,1| 1)$, $(I_h; 0,1| 3)$, $(I_v; \{0,0\} | 0,0) $ and $(F;1, 0|1)$.


\subsection{Triangular prudent walks}
The principles that underlie the generation of triangular prudent walks are
the same as for prudent walks on the square lattice (Section~\ref{sec:generation-prudent}). The type of a
walk $w$ depends on whether its last step $s$ is \emm inflating, (type $I$) or walks
\emm along, the  box (type $A$). Consider the last edge of the
              box that has changed while constructing
$w$. Assume it is the right edge (the other cases are treated
similarly after a 
rotation). If $s$ points up (NW or NE), then $i$ (resp. $j$) denotes 
the distance between the endpoint of $w$ and the top vertex
(resp. bottom right vertex) of the box. Otherwise, the roles of $i$ and
$j$ are exchanged. Define $L(w)=(T;i,j)$, where $T$ is the
type of $w$. Then $L$ is a good labelling. From this, one computes the
extension numbers  needed to generate a walk of length $n$ in
time $O(n^3)$.  

During the generation stage, we also keep track of the last step
$s$ of $w$, and of its direction $d$ around the box: if the last edge that has
changed is the right one,
then $d=1$ if $s$ goes East of South-East,  $d=-1$
otherwise. The other cases are treated similarly after a rotation. The
combined rewriting rules read: 
$$\begin{array}{ccl}
(I;i,j \ | \ s, d ) &\rightarrow &
\left\{
\begin{array}{llrl}
(I;i,j+1  &|& s,d) \\
(I;j,i+1  &|& s-d,-d) \\
(A;j-1,i+1  &|& s-2d, -d)\\
(A;i-1,j+1  &|& s+d, d) & \hbox{if } i >0\\
(I;0,j+1  &|& s+d, -d) & \hbox{if } i=0\\
(I;j,1 &|& s+2d, d) & \hbox{if } i=0
\end{array}
\right. ,
\end{array}$$
$$\begin{array}{ccl}
 (A;i,j \ | \ s,d ) &\rightarrow& 
\left\{
\begin{array}{llrl}
(I;i,j+1  &|& s-d,d) \\
(I;j, i+1  &|& s-2d,-d) \\
(A;i-1,j+1  &|& s, d) & \hbox{if } i >0\\
(I;0,j+1  &|& s, -d) & \hbox{if } i=0\\
(I;j,1 &|& s+d, d) & \hbox{if } i=0\\
\end{array}
\right..
\end{array}
$$
Again, the steps are encoded by $0, 1, \ldots, 5$ in clockwise
order, and considered modulo 6. The NW step is encoded by $0$. The six
children of the root have labels $ (I;0 ,1\  | \ s,d)$, with $(s,d)=
(0,-1),( 1,1), (2,-1), ( 3,1), (4,-1 ), (5,1 )$.

\section{Some questions and perspectives}

The enumeration of general prudent walks remains an open
problem, although we have established a functional equation defining their
generating function (Lemma~\ref{lem:4-sided}). Recall that this g.f. is
expected to be non-D-finite, and to have the same radius of
convergence as the g.f.s of 2-sided and 3-sided walks
(Propositions~\ref{prop-2-sides-asympt} and~\ref{prop-nature-3sided})~\cite{guttmann}. 

It may be worth investigating  the number of 
prudent walks that span a given box. This number is indeed
simple in the case of triangular prudent walks
(Proposition~\ref{prop:triangle-simple}), even though the length \gf\
is not D-finite. 

Regarding random generation, we think that we have optimized the
 step-by-step recursive approach. However, it only provides walks with
a few hundred steps. We believe that a better understanding of our
 results, and of the combinatorics of prudent walks, should lead to
 more efficient sampling procedures. 

All the walks we have been able to count
move away from the origin at a positive speed. Would it be possible to
introduce a non-uniform distribution on these walks that would make
them significantly more compact? Recall that the end-to-end distance
in an $n$-step self-avoiding walk is expected to grow like $n^{3/4}$ only.

In the same spirit, it should be possible to study \emm kinetic, versions of
prudent walks, where at each tick of the clock, the walk 
takes, with equal probability, one of the available prudent steps. The
resulting distribution on $n$-step walks is no longer
uniform.  This  version is
actually the one that was studied in~\cite{turban-debierre1}
and~\cite{santra}. Clearly, the random 
generation of these kinetic walks can be performed in linear time, and
does not require any precomputation (in terms of generating trees, one
chooses uniformly one of the children). Random trajectories differ
significantly from the uniform case (Fig.~\ref{fig:kinetic}). Their behaviour has very recently been 
explained by Beffara \emm et al.,~\cite{beffara-prudent}.

\begin{figure}[pbth]
 \begin{center}
\includegraphics[height=3cm,width=3cm]{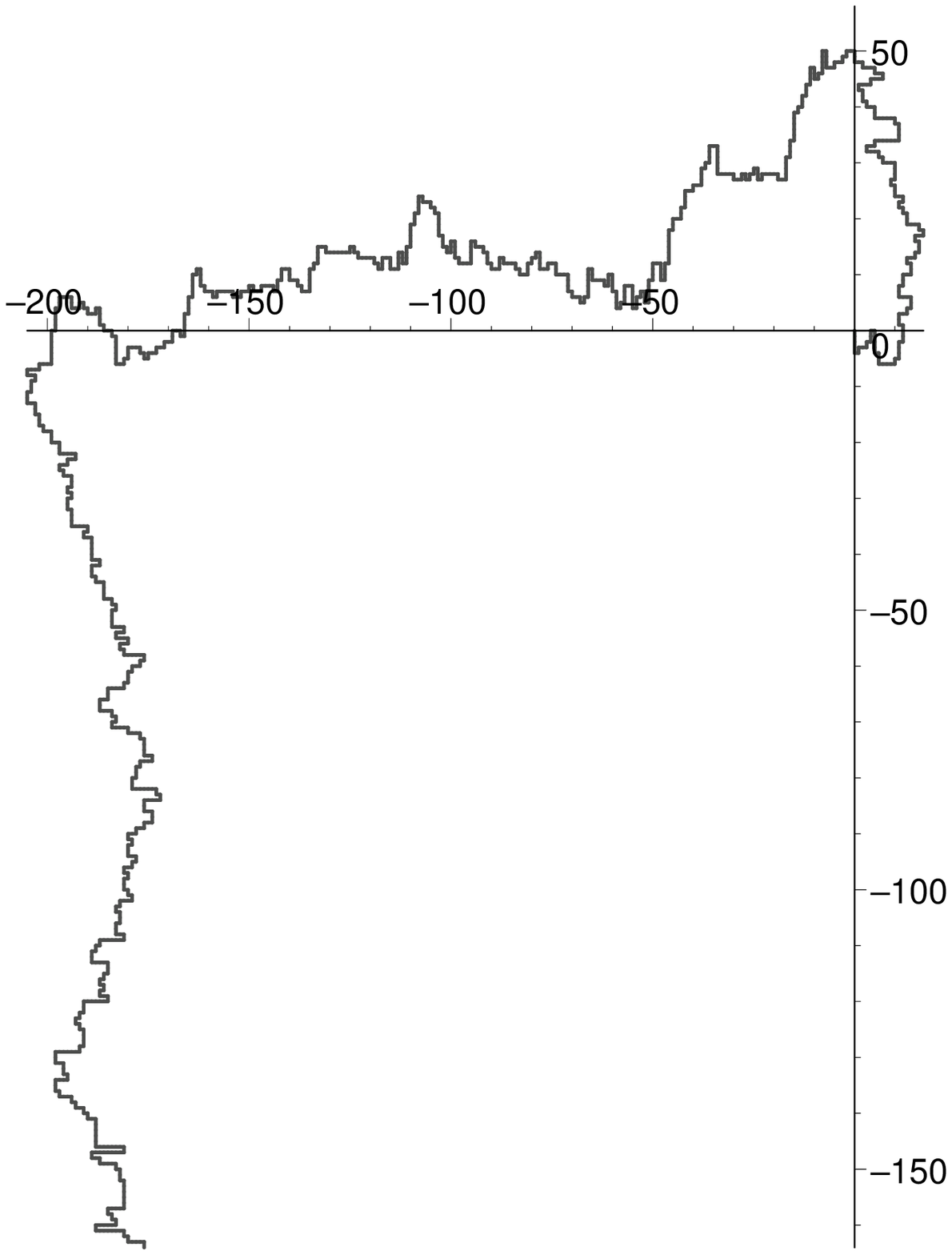}
\hskip 10mm
\includegraphics[height=3cm,width=3cm]{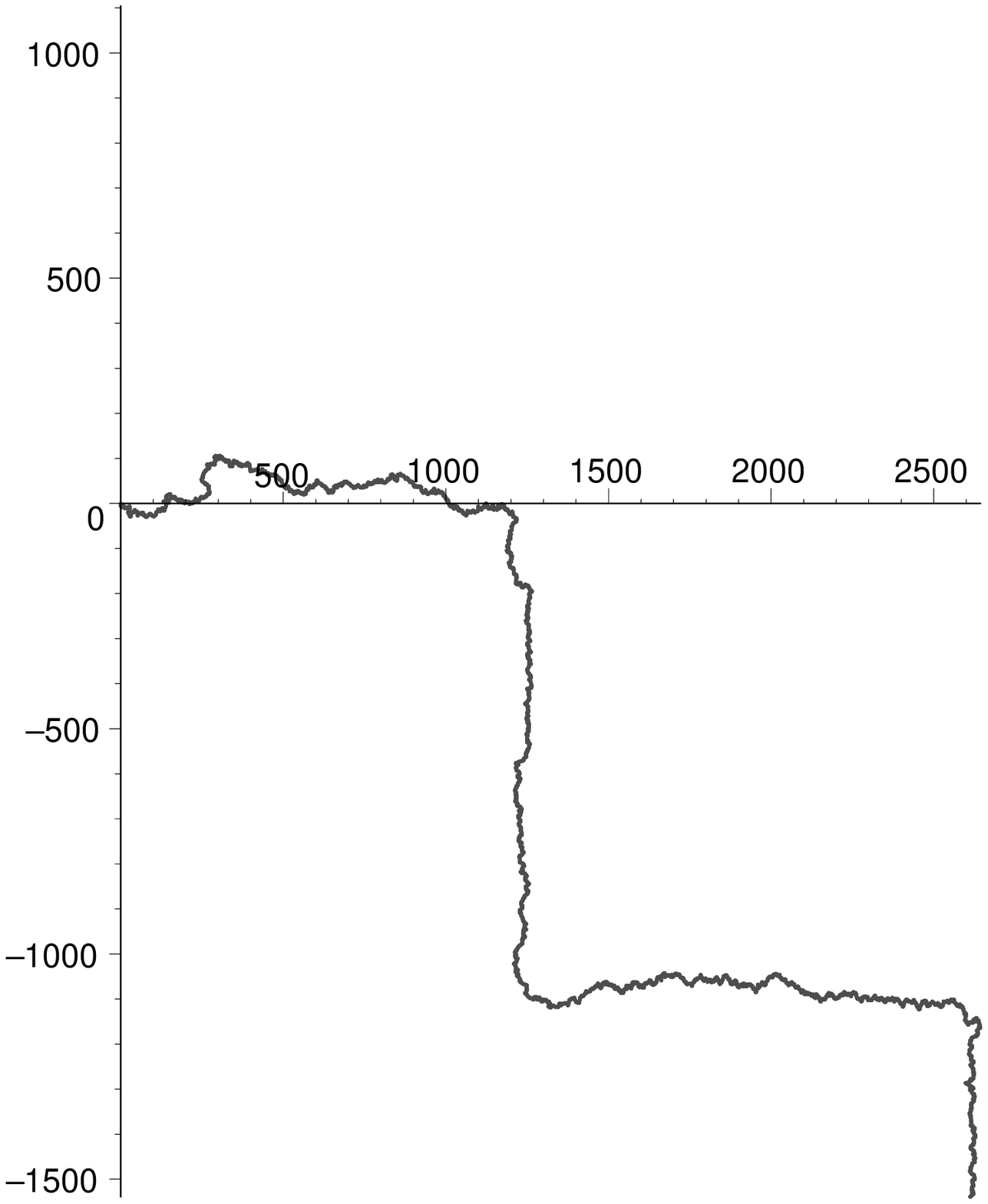}
\hskip 10mm
\includegraphics[height=3cm,width=3cm]{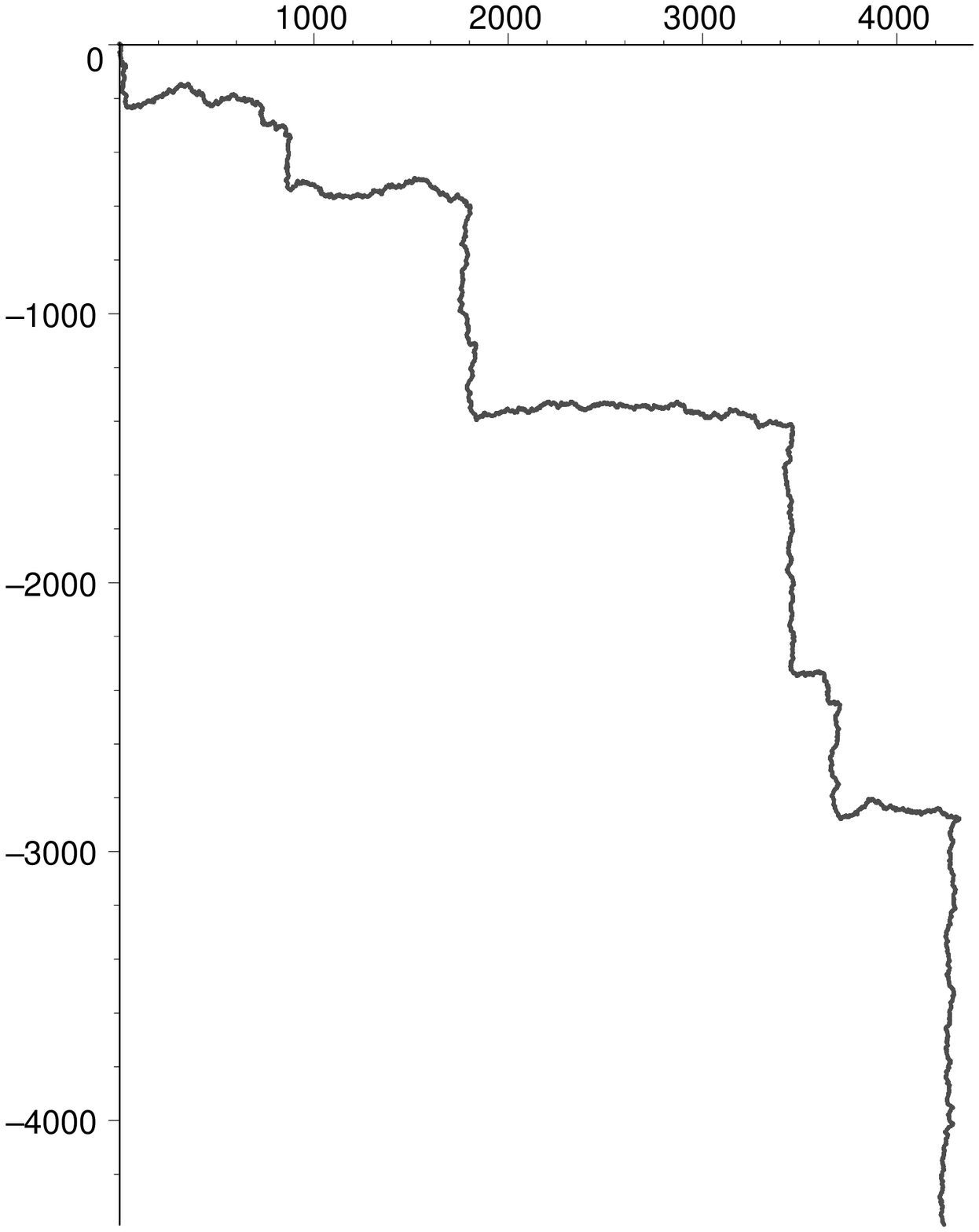}
    \caption{Kinetic prudent walks with  1000, 10000 and 20000 steps.}
    \label{fig:kinetic}
  \end{center}
\end{figure}


\bigskip
\noindent
{\bf Acknowledgements.} I am indebted to Tony Guttmann for 
discussions on prudent walks, which gave a new impulse to my interest
in these objects.

\bibliographystyle{plain}
\bibliography{biblio}

\begin{thebibliography}{10}

\bibitem{andrews}
G.~E. Andrews.
\newblock {\em The theory of partitions}.
\newblock Addison-Wesley Publishing Co., Reading, Mass.-London-Amsterdam, 1976.
\newblock Encyclopedia of Mathematics and its Applications, Vol. 2.

\bibitem{angel-benjamini-virag}
O.~Angel, I.~Benjamini, and B.~Vir\'ag.
\newblock Random walks that avoid their past convex hull.
\newblock {\em Electron. Commun. Probab.}, 8:6--16, 2003.

\bibitem{hexacephale}
C.~Banderier, M.~Bousquet-M{\'e}lou, A.~Denise, P.~Flajolet, D.~Gardy, and
  D.~Gouyou-Beauchamps.
\newblock Generating functions for generating trees.
\newblock {\em Discrete Math.}, 246(1-3):29--55, 2002.

\bibitem{beffara-prudent}
V.~Beffara, S.~Friedli, and Y.~Velenik.
\newblock Scaling limit of the prudent walk.
\newblock ArXiv:0902.4312.

\bibitem{mbm-kreweras}
M.~Bousquet-M{\'e}lou.
\newblock Walks in the quarter plane: {K}reweras' algebraic model.
\newblock {\em Ann. Appl. Probab.}, 15(2):1451--1491, 2005.

\bibitem{mbm-mishna}
M.~Bousquet-M{\'e}lou and M.~Mishna.
\newblock Walks with small steps in the quarter plane.
\newblock ArXiv:0810.4387.

\bibitem{bousquet-petkovsek-1}
M.~Bousquet-M{\'e}lou and M.~Petkov{\v{s}}ek.
\newblock Linear recurrences with constant coefficients: the multivariate case.
\newblock {\em Discrete Math.}, 225(1-3):51--75, 2000.

\bibitem{mbm-petkovsek2}
M.~Bousquet-M{\'e}lou and M.~Petkov{\v{s}}ek.
\newblock Walks confined in a quadrant are not always {D}-finite.
\newblock {\em Theoret. Comput. Sci.}, 307(2):257--276, 2003.

\bibitem{mbm-xin}
M.~Bousquet-M{\'e}lou and G.~Xin.
\newblock On partitions avoiding 3-crossings.
\newblock {\em S\'em. Lothar. Combin.}, 54:Art. B54e, 21 pp. (electronic),
  2005/06.

\bibitem{guttmann}
J.~C. Dethridge and A.~J. Guttmann.
\newblock Prudent self-avoiding walks.
\newblock {\em Entropy}, 8:283--294, 2008.

\bibitem{duchi}
E.~Duchi.
\newblock On some classes of prudent walks.
\newblock In {\em FPSAC'05}, Taormina, Italy, 2005.

\bibitem{boltzmann}
P.~Duchon, P.~Flajolet, G.~Louchard, and G.~Schaeffer.
\newblock Boltzmann samplers for the random generation of combinatorial
  structures.
\newblock {\em Combin. Probab. Comput.}, 13(4-5):577--625, 2004.

\bibitem{fayolle-livre}
G.~Fayolle, R.~Iasnogorodski, and V.~Malyshev.
\newblock {\em Random walks in the quarter-plane: Algebraic methods, boundary
  value problems and applications}, volume~40 of {\em Applications of
  Mathematics}.
\newblock Springer-Verlag, Berlin, 1999.

\bibitem{flajolet-sedgewick}
P.~Flajolet and R.~Sedgewick.
\newblock {\em Analytic combinatorics}.
\newblock Cambridge University Press, Cambridge, 2009.

\bibitem{flajoletcalculusINRIA}
P.~Flajolet, P.~Zimmerman, and B.~Van~Cutsem.
\newblock A calculus for the random generation of labelled combinatorial
  structures.
\newblock {\em Theoret. Comput. Sci.}, 132(1-2):1--35, 1994.

\bibitem{guttmann-comm}
A.~J. Guttmann.
\newblock Personal communication.
\newblock 2005.

\bibitem{rechni}
E.~J. Janse~van Rensburg, T.~Prellberg, and A.~Rechnitzer.
\newblock Partially directed paths in a wedge.
\newblock {\em J. Combin. Theory Ser. A}, 115(4):623--650, 2008.
\newblock ArXiv math/0609834.

\bibitem{jensen-guttmann}
I.~Jensen and A.~J. Guttmann.
\newblock Self-avoiding polygons on the square lattice.
\newblock {\em J. Phys. A}, 32(26):4867--4876, 1999.

\bibitem{kennedy-pivot-SAW}
T.~Kennedy.
\newblock A faster implementation of the pivot algorithm for self-avoiding
  walks.
\newblock {\em J. Statist. Phys.}, 106(3-4):407--429, 2002.

\bibitem{lawler-schramm-werner}
G.~F. Lawler, O.~Schramm, and W.~Werner.
\newblock On the scaling limit of planar self-avoiding walk.
\newblock In {\em Fractal geometry and applications: a jubilee of Beno\^\i t
  Mandelbrot, Part 2}, volume~72 of {\em Proc. Sympos. Pure Math.}, pages
  339--364. Amer. Math. Soc., Providence, RI, 2004.

\bibitem{madras-slade}
N.~Madras and G.~Slade.
\newblock {\em The self-avoiding walk}.
\newblock Probability and its Applications. Birkh\"auser Boston Inc., Boston,
  MA, 1993.

\bibitem{Mishna-jcta}
M.~Mishna.
\newblock Classifying lattice walks restricted to the quarter plane.
\newblock {\em J. Combin. Theory Ser. A}, 116(2):460--477, 2009.
\newblock Arxiv:math/0611651.

\bibitem{marni-rechni}
M.~Mishna and A.~Rechnitzer.
\newblock Two non-holonomic lattice walks in the quarter plane.
\newblock {\em Theoret. Comput. Sci.}, 410(38-40):3616--3630, 2009.
\newblock Arxiv:math/0701800.

\bibitem{nijenhuis-wilf-book}
A.~Nijenhuis and H.~S. Wilf.
\newblock {\em Combinatorial algorithms}.
\newblock Academic Press Inc., New York, second edition, 1978.

\bibitem{prea}
P.~Pr\'ea.
\newblock Exterior self-avoiding walks on the square lattice.
\newblock Manuscript, 1997.

\bibitem{prodinger}
H.~Prodinger.
\newblock The kernel method: a collection of examples.
\newblock {\em S\'em. Lothar. Combin.}, 50:Art. B50f, 19 pp. (electronic),
  2003/04.

\bibitem{santra}
S.~B. Santra, W.~A. Seitz, and D.~J. Klein.
\newblock Directed self-avoiding walks in random media.
\newblock {\em Phys. Rev. E}, 63(6):067101--1--4, 2001.

\bibitem{stanley-vol1}
R.~P. Stanley.
\newblock {\em Enumerative combinatorics. {V}ol. 1}, volume~49 of {\em
  Cambridge Studies in Advanced Mathematics}.
\newblock Cambridge University Press, Cambridge, 1997.

\bibitem{stanley-vol2}
R.~P. Stanley.
\newblock {\em Enumerative combinatorics. {V}ol. 2}, volume~62 of {\em
  Cambridge Studies in Advanced Mathematics}.
\newblock Cambridge University Press, Cambridge, 1999.

\bibitem{turban-debierre1}
L.~Turban and J.-M. Debierre.
\newblock Self-directed walk: a {M}onte {C}arlo study in two dimensions.
\newblock {\em J. Phys. A}, 20(3):679--686, 1987.

\end{thebibliography}

\end{document}